\providecommand {\norm}[1] {\lVert#1\rVert}
\providecommand {\bignorm}[1] {\Bigl\lVert#1\Bigr\rVert}
\providecommand {\abs}[1] {\lvert#1\rvert}
\providecommand {\inprod}[1]{\langle #1 \rangle}
\providecommand {\set}[1]{\lbrace #1 \rbrace}
\providecommand {\floor}[1]{\lfloor #1 \rfloor}
\providecommand {\besov}[3]{\Lambda^ {#1} _ {#2} ({#3})}
\providecommand {\cc}[2][p]{\mC^{#1}_{#2}}
\providecommand {\bessel}[2]{\mathcal{P}_ {#1} ({#2})}
\providecommand {\app}[3]{\mathcal{E}^{#1}_{#2} ({#3})}
\providecommand {\inject}{\hookrightarrow}
\providecommand {\inv}[1]{{#1}^{-1}}
\newcommand {\schwartz} {\ensuremath{\mathscr{S}}}
\newcommand {\one} {\mathbf{1}}
\newcommand {\bn} {\ensuremath{\mathbb{N}}}
\newcommand {\br} {\ensuremath{\mathbb{R}}}
\newcommand {\bz} {\ensuremath{\mathbb{Z}}}
\newcommand {\bc} {\ensuremath{\mathbb{C}}}
\newcommand {\bnd} {\ensuremath{\mathbb{N}^d}}
\newcommand {\brd} {\ensuremath{\mathbb{R}^d}}
\newcommand {\btd} {\ensuremath{\mathbb{T}^d}}
\newcommand {\bcd} {\ensuremath{\mathbb{C}^d}}
\newcommand {\bzd}[1] [d] {\ensuremath{\bz^{#1}}}
\newcommand {\mA} {\ensuremath{\mathcal{A}}}
\newcommand {\mD} {\ensuremath{\mathcal{D}}}
\newcommand {\mE} {\ensuremath{\mathcal{E}}}
\newcommand {\mF} {\ensuremath{\mathcal{F}}}
\newcommand {\mG} {\ensuremath{\mathcal{G}}}
\newcommand {\mX} {\ensuremath{\mathcal{X}}}
\newcommand {\mM} {\ensuremath{\mathcal{M}}}
\newcommand {\mB} {\ensuremath{\mathcal{B}}}
\newcommand {\mC} {\ensuremath{\mathcal{C}}} 
\newcommand {\mP} {\ensuremath{\mathcal{P}}}
\newcommand {\mS} {\ensuremath{\mathcal{S}}}
\newcommand {\mT} {\ensuremath{\mathcal{T}}}
\newcommand{\ls}{\lesssim}
\newcommand{\gs}{\gtrsim}
\newcommand {\lone} {\ensuremath{\ell^1}}
\newcommand {\ltwo} {\ensuremath{\ell^2}}
\newcommand {\lp} [1] [p] {\ensuremath{\ell^{#1}(\bzd)}}  
\newcommand {\lpw}[2] [p] {\ensuremath{\ell^{#1}_{#2}(\bzd)}}  
\newcommand {\lpz} [1] [p] {\ensuremath{\ell^{#1}(\bz)}}  
\newcommand {\aps}[2] [p] {\ensuremath{\mathcal{E}_{#2}^#1} }
\newcommand {\Ck}[1][k] {\ensuremath{\mC^{#1}}}
\newcommand {\jaffard}[1][w] {\ensuremath{\mC^\infty_{#1}}}
\newcommand {\schur}[2][1] {\ensuremath{\mS^{#1}_{#2}}}
\newcommand {\bopzd}  {\ensuremath{\mathcal B ( \ell^2(\bzd))}}
\newcommand {\bop}  {\ensuremath{\mathcal B ( \ell^2)}}
\DeclareMathOperator{\supp}{supp}
\DeclareMathOperator{\dd}{\mathrm{d}}
\DeclareMathOperator{\diag}{Diag}
\DeclareMathOperator{\id}{\mathrm{id}}
\newcommand \BL {bandlimited}
\newcommand \cont {continuous}
\newcommand \BS {Banach space}
\newcommand \BA {Banach algebra}
\newcommand \MA {matrix algebra}
\newcommand \HMA {homogeneous matrix algebra}
\newcommand \hmg {homogeneous}
\newcommand \odd {off-diagonal decay}
\newcommand \as {approximation space}
\newcommand \IC {inverse-closed}
\newcommand \HZ {H\"{o}lder-Zygmund}
\newcommand \LP {Littlewood-Paley}
\newcommand \DPU {dyadic partition of unity}
\newcommand {\cexp} [1] [{x t}] {\ensuremath{e^{2 \pi i #1}}}
\newcommand { \muleb} [1] {\frac{d {#1}}{#1}}
\newcommand { \mulebd} [1] {\frac{d {#1}}{\abs{#1}^d}}
\newcommand { \mulebdii} [1] {\frac{d {#1}}{\abs{#1}_2^d}}
\newtheorem{prop}{Proposition} [section]
\newtheorem{cor}[prop]{Corollary}
\newtheorem{thm}[prop]{Theorem}
\newtheorem{lem}[prop]{Lemma}
\newtheorem*{theo}{Theorem}
\theoremstyle{definition}
\newtheorem{defn}[prop]{Definition} 
\theoremstyle{remark}
\newtheorem*{rem}{Remark}
\newtheorem*{rems}{Remarks}
\newtheorem{ex}[prop]{Example}
\begin{document}
\title 
{
Spectral Invariance  of Besov-Bessel  Subalgebras 
}
\date{\today} 
\author{ Andreas Klotz}
\address{Faculty of Mathematics\\University of Vienna \\
  Nordbergstrasse 15\\A-1090 Vienna, AUSTRIA} 
\email{andreas.klotz@univie.ac.at} \thanks{ A.~K.~ was supported by  National Research Network S106 SISE of the
Austrian Science Foundation (FWF) and the FWF project P22746N13}
\keywords{Banach algebra, matrix algebra, Besov spaces, Bessel potential spaces,  inverse closedness, spectral invariance, off-diagonal
  decay, automorphism group, Jackson-Bernstein theorem} \subjclass[2000]{41A65, 42A10, 47B47}

\begin{abstract}
Using principles of the theory of smoothness spaces we give systematic constructions of scales of \IC\ subalgebras of a given \BA\ with the action of a $d$-parameter automorphism group.
In particular we obtain the \IC ness of Besov algebras, Bessel potential algebras and approximation algebras of polynomial order  in their defining algebra.
By a proper choice of the group action these general results can be applied to algebras of infinite matrices and yield \IC\ subalgebras of matrices with \odd\ of polynomial order. Besides alternative proofs of known results we obtain new classes of \IC\ subalgebras of matrices with \odd.

This work is a continuation and extension of results presented in \cite{grkl10}. 

 \end{abstract}
\maketitle

\section{Introduction}

We aim at systematic constructions of \IC\ subalgebras  of a given \BA\ \mA. 
Recall that a subalgebra \mB\ of  \mA\ is called \IC\ in \mA, if
\begin{equation}
  \label{eq:ic}
   b \in \mB \text{ invertible in } \mA \text{ implies } \inv b \in \mB \,.
\end{equation}
Many equivalent notions for this relation are used in the literature, e.g.,one says that  \mB\ is a \emph{spectral subalgebra} of \mA ~\cite{palmer94}, or  \mB\ is \emph{spectrally invariant} in \mA, see ~\cite{Gro09} for a collection of synonyms.

A prototypical result 
is  Wiener's Lemma, which states precisely that the Wiener algebra of trigonometric series with absolutely convergent coefficients is \IC\ in the algebra of \cont\ functions on the torus. Another example is the algebra $C^m(X)$ of $m$ times \cont ly differentiable functions on a closed interval $X$, which is \IC\ in $C(X)$ by the iterated quotient rule 
(note that the algebra property of $C^m(X)$ follows from the iterated product rule).  

Our original interest was the construction of \BA s of matrices with \odd\ that are \IC\ in \bop, the bounded operators in $\ell^2$, see~\cite{grkl10}. 
A key result is Jaffard's theorem.
\begin{theo}[{\cite{Jaffard90}}]
  If the entries of the matrix $A$ satisfy $\abs{ A(k,l)} \leq C
  \abs{k-l}^{-r}$ for some $C>0$ and $r>0$, and $A$ is invertible in \bop, then
  $\abs{ \inv A(k,l)} \leq C' \abs{k-l}^{-r}$ for some $C'>0$.
\end{theo}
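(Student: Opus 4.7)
The plan is to identify the space $\mA_r := \{A : \sup_{k,l}(1+\abs{k-l})^r\abs{A(k,l)} < \infty\}$ with a Besov-type smoothness space for the action of a natural automorphism group on \bopzd, and to deduce inverse-closedness from that identification. (I will work with indices in \bzd; Jaffard's original statement is the case $d=1$.)

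First I would set up the group. Define unitaries $M_\xi$ on $\ell^2(\bzd)$ for $\xi \in \btd$ by $(M_\xi f)(k) = e^{2\pi i k\cdot \xi}f(k)$, and let $\Phi_\xi A := M_\xi A M_\xi^{-1}$ be the resulting strongly continuous action of \btd\ on \bopzd\ by $*$-automorphisms. A direct computation gives $(\Phi_\xi A)(k,l) = e^{2\pi i (k-l)\cdot \xi}A(k,l)$, so the orbit $\xi \mapsto \Phi_\xi A$ encodes the off-diagonal structure of $A$ through its smoothness in $\xi$: for an integer multi-index $\alpha$, formal differentiation at $\xi = 0$ produces the matrix with entries $(2\pi i)^{\abs\alpha}(k-l)^\alpha A(k,l)$. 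The target characterization is that $A\in\mA_r$ if and only if the orbit $\xi\mapsto\Phi_\xi A$ lies in an appropriate H\"older/Besov class of order $r$ with values in \bopzd.

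Next I would verify the Banach algebra and embedding properties for $r > d$: $\mA_r$ embeds continuously in \bopzd\ by Schur's test, and it is a Banach $*$-algebra under the norm $\norm{A}_r := \sup_{k,l}(1+\abs{k-l})^r\abs{A(k,l)}$; submultiplicativity follows from the standard convolution estimate $\sum_{m\in\bzd}(1+\abs{k-m})^{-r}(1+\abs{m-l})^{-r} \lesssim (1+\abs{k-l})^{-r}$, valid precisely when $r>d$, and the $*$-property is immediate. The main step is inverse-closedness. Since $\Phi_\xi$ is an isometric automorphism of \bopzd, any $A$ invertible in \bopzd\ gives rise to $\Phi_\xi A$ uniformly invertible with $(\Phi_\xi A)^{-1} = \Phi_\xi \inv A$ and $\norm{\Phi_\xi \inv A}_{\mathrm{op}} = \norm{\inv A}_{\mathrm{op}}$ independent of $\xi$. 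The resolvent identity
\[
\Phi_\xi \inv A - \Phi_\eta \inv A \;=\; \Phi_\xi \inv A \,(\Phi_\eta A - \Phi_\xi A)\, \Phi_\eta \inv A,
\]
together with its iterated finite-difference analogues, then transfers H\"older/Besov estimates from $\xi\mapsto\Phi_\xi A$ to $\xi\mapsto\Phi_\xi \inv A$ with the same modulus and the same order $r$; translating this back through the characterization yields $\inv A\in\mA_r$.

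The main obstacle will be making precise the dictionary between scalar smoothness of $\xi\mapsto\Phi_\xi A$ in a Besov norm of order $r$ and the weighted pointwise decay $\abs{A(k,l)}\lesssim(1+\abs{k-l})^{-r}$ uniformly in $k,l$, especially for non-integer $r$ where iterated finite differences or a Littlewood--Paley decomposition must replace pointwise differentiation. Setting up this dictionary abstractly for a \BA\ equipped with a $d$-parameter automorphism group is precisely the task undertaken in the body of the paper.
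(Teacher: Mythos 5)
Your overall strategy --- encode off-diagonal decay as smoothness of the orbit $\xi\mapsto M_\xi A M_\xi^{-1}$ and transfer that smoothness to the inverse via the quotient rule $\Delta_t(\inv a)=-\psi_t(\inv a)\,\Delta_t(a)\,\inv a$ --- is exactly the philosophy of this paper, and the second half of your argument (uniform invertibility of $\Phi_\xi A$ plus the iterated resolvent identity) is sound: it is in substance the proof of Theorem~\ref{thm-besov-IC}. The gap is in the step you defer to the end, the ``dictionary'': the claimed equivalence between $A\in\mA_r$ and membership of the orbit in a H\"older/Besov class of order $r$ \emph{with values in \bopzd} is false, and it fails in a way that cannot be absorbed into constants. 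Smoothness of order $s$ of $\xi\mapsto\Phi_\xi A$ measured in the operator norm controls only the operator norms of dyadic blocks of side-diagonals, $\bignorm{\sum_{2^j\leq\abs{l}_\infty<2^{j+1}}\hat A(l)}_{\bop}\lesssim 2^{-js}$ (Proposition~\ref{prop:polappHMAs}). Passing between such block estimates and the uniform pointwise bound $\abs{A(k,l)}\lesssim(1+\abs{k-l})^{-r}$ costs a factor of $\abs{k-l}^{\pm d}$, the number of lattice points in a dyadic annulus: one only gets $\mC^\infty_r\subseteq\besov{\infty}{r-d}{\bop}\subseteq\mC^\infty_{r-d}$, and neither inclusion is an equality. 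So your argument does prove that $\besov{\infty}{s}{\bop}$ is \IC\ in \bop\ --- a true and useful statement --- but running the conclusion back through the (one-sided) dictionary only yields $\abs{\inv A(k,l)}\lesssim\abs{k-l}^{-(r-d)}$, not the asserted decay of order $r$.

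The natural repair is to measure the smoothness of the orbit in a \emph{solid} matrix norm rather than in $\norm{\cdot}_{\bop}$, since for a solid \HMA\ the block norms do control individual entries; this is how the paper recovers the Jaffard scale, via $\app{\infty}{s}{\mC^\infty_{r'}}=\mC^\infty_{r'+s}$ and the weighted/Bessel description $\bessel{s}{\mC^\infty_{r'}}=\mC^\infty_{r'+s}$. But that only shows that $\mC^\infty_r$ is \IC\ in $\mC^\infty_{r'}$ for $d<r'<r$; it reduces Jaffard's theorem to the base case that some $\mC^\infty_{r'}$ is \IC\ in \bop, which is Jaffard's theorem again and is not produced by the smoothness machinery. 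The paper does not reprove this base case: it is imported from the literature (Proposition~\ref{fundamental}, citing \cite{Bas90, GL04a, Jaffard90, Sun05}), and the known proofs use a genuinely different ingredient (Jaffard's commutator bootstrap, or Wiener--L\'evy/Bochner--Phillips-type arguments). As written, your proposal is therefore missing this essential input and overstates what the orbit-smoothness characterization over \bopzd\ can deliver.
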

In other words Jaffard's theorem states that the  \BA\  $\jaffard [r]$, consisting of matrices $A$ with finite norm 
$\norm{A}_{\jaffard [r]}=\sup_{k,l} \abs {A(k,l)} (1+\abs{k-l}^r)$,
is \IC\ in \bop. 
Generalizations and variants of this theorem  have been obtained in, e.g.,~\cite{Bas90, Bas97, GL04a, GR08, Jaffard90}. 
\\
Inverse-closed subalgebras are often related to the concept of smoothness,  an elementary example is again the algebra $C^m(X)$. In more generality, the domain of a densely defined, closed and symmetric derivation on the $C^*$ algebra \mA\ was shown to be \IC\ in \mA\ by Bratteli and Robinson~\cite{bratteli75}, an extension of this result to general \BA s was given in~\cite{MR1221047}. Related concepts of smoothness that define \IC\ subalgebras are the \emph{differentials norms}~\cite{blackcuntz}, the $D_p$ algebras~\cite{MR1221047,kissin94}, or the \emph{Leibniz seminorms}~\cite{Rieffel10}.

Although it might be not immediately obvious the examples of matrices with \odd\ fit into this picture, as \odd\ can be decribed by smoothness conditions. In particular, the formal commutator
\[
\delta(A)= [X,A] \,,
\]
$X=\diag((k)_{k \in \bz})$,  is a derivation on \bop, and its domain defines an algebra of matrices with \odd\ that is \IC\ in \bop~\cite[3.4]{grkl10}.

The proof of the spectral invariance of $\mB \subseteq \mA$ makes often detailed use of some specific properties of the involved algebras. The standard proof for Wiener's Lemma is a prime example of an application of the  Gelfand theory. 
 Proofs of the spectral invariance of \BA s of matrices involve the theorem of Bochner-Philips~\cite{Bas90,Baskakov97}, interpolation arguments~\cite{GL04a, Sun05,Sun07a} or commutator estimates~\cite{Jaffard90}.
As it turns out, all of these proof methods use some related concepts of smoothness ~\cite{Klo09}. %

In a previous publication \cite{grkl10} we obtained systematic constructions of \IC\ subalgebras of a given \emph{\BA\ } with additional smoothness conditions.  
In particular it was shown in \cite{grkl10} that
\begin{enumerate}
\item the domain of a (not necessaryly densely defined) closed derivation of a \emph{symmetric} \BA\ \mA\ is \IC\ in \mA,
\item \label{contaut} the subalgebra $C(A)$ of \cont\ elements of a \BA\ \mA\ with $d$-parameter automorphism group $\Psi$ and the associated \HZ\ spaces $\besov \infty r \mA$ are \IC\  in \mA,
\item the \as s of polynomial order of a symmetric \BA\ \mA\ are \IC\ in \mA, where the  approximating subspaces are adapted to the algebra multiplication (see Section~\ref{sec:appr-spac-algebr}).
\end{enumerate}
 Applied to subalgebras of  \bop\ the theory yields scales of \IC\ subalgebras of matrices with \odd, including the Banach algebras of matrices used in the literature cited above, but also new classes of \IC\ subalgebras of matrices with \odd\ constructed by approximation with banded matrices.
 
In this article we generalize the approach (\ref{contaut}). 
In its essence this approach is based on  the product and quotient rules of real analysis. The identities
\begin{equation}\label{eq:prodquot}
\begin{split}
  \Delta_t(fg) &= T_t f \Delta_tg + \Delta_tf g \,, \\
  \Delta_t(1/f)&= -\frac{\Delta_tf}{T_tf \cdot f}\,,
\end{split}
\end{equation}
where  $T_tf(x)=f(x-t)$ is the translation operator, imply that the smoothness of $f$ (resp. $g$) is preserved by the product $fg$, and by the reciprocal $1/f$. To give an example, the membership of the \cont\ function $f$ in the \HZ\ space $\besov \infty r {L^\infty(\br)}$ for  $0<r<1$ is defined by the condition  $\norm{\Delta_tf}_\infty \leq C \abs t^r$ for some $C>0$, and Equation (\ref{eq:prodquot}) implies that  $\norm{\Delta_t(1/f)}_\infty \leq C' \abs t^r$ for a constant $C'>0$.

In \cite{grkl10} we have (amongst other things) adapted this approach to more general \BA s. If translation is replaced by the  action of a $d$-parameter automorphism group on the \BA\ \mA,  the noncommutative form of the product and quotient rule~(\ref{eq:prodquot}) impliy that the noncommutative \HZ\ space $\besov \infty r \mA$ is an \IC\ subalgebra of \mA~\cite[3.21]{grkl10}.

In this article we extend this approach  to cover Besov and Bessel potential spaces. In more detail, the organization of the paper is as follows.


 After introducing notation we describe some classes of matrices that will serve as examples for the theory to be developed, and we define the approximation spaces needed. In Section \ref{sec:smoothnessBA} we introduce smoothness for a \BA\ \mA\ by the action of a $d$-parameter automorphism group, and review basic properties.  Besov spaces are defined in Section~\ref{sec:besov}, and it is proved that they form \IC\ subalgebras of \mA. A useful property not only for simplifying several proofs but also of conceptual interest is a reiteration theorem for Besov spaces (Theorem~\ref{thm-reiteration}). In ~\cite{Amrein96}
 a similar theorem was proved by interpolation methods for Besov spaces of operators. An examination of the details of this proof shows that it uses similar ingredients (and is of similar complexity) as ours, which is valid in the more general setting of a \BA\ with automorphism group. Instead of interpolation theory the proof given here uses estimates for the moduli of smoothness. In Section~\ref{sec:char-besov-spac} we identify Besov spaces as approximation spaces and obtain a \LP-decomposition of them.

For the discussion of Bessel potential spaces $\bessel r \mA$ in Section~\ref{sec:bess-potent-spac} we introduce the concept of $C_w$-continuity~\cite{Amrein96,Arveson74} in order to cover relevant examples of \BA s of matrices with \odd. A description of  Bessel potential spaces  by  hypersingular integrals is used to prove the algebra properties and the \IC ness of $\bessel r \mA$ in \mA. 

Again, we illustrate the abstract concepts by constructing subalgebras of matrices with \odd. 
The membership of a matrix $A$ in a Besov or Bessel subalgebra of \bop\  is then equivalent to a form of \odd, so this application of the general theory  describes scales of \IC\ subalgebras of matrices with \odd. 

Long proofs have been moved to the appendices.
\subsection*{Acknowledgments}
\label{sec:acknowledgments}
The author wants to thank Karlheinz Gr\"ochenig for many helpful discussions. 

\section{Resources}
\label{sec:resources}

\subsection{Notation}
\label{sec:notation}
The \emph{d-dimensional torus} is $\btd =\brd / \bzd $. 
Let $\bcd_*=\bcd\setminus\set{0}$, and $\brd_*=\brd\setminus\set{0}$. 
The symbol $\floor x$ denotes the greatest integer smaller or equal to the real number $x$.
 
A multi-index $\alpha= (\alpha _1, \dots , \alpha _d)$ is a $d$-tuple of nonnegative integers. We set
$x^\alpha=x_1^{\alpha_1}\cdots x_d^{\alpha_d}$, and $ D^\alpha f(x)= {\partial_1^{\alpha_1}}\cdots
{\partial_d^{\alpha_d}} f (x) $ is the partial derivative.  
The degree of $x^\alpha$ is $\abs{\alpha} = \sum _{j=1}^d \alpha _j$, and $\beta \leq \alpha $ means that $\beta _j \leq \alpha _j$ for $j=1, \dots ,d$. 
More generally, $\abs {x}_p=\bigl(\sum_{k=1}^d\abs {x(k)}^p \bigr)^{1/p}$ denotes the $p$-norm on $\bcd$.

Positive constants will be denoted by $C$, $C'$,$C_1$,$c$, etc., The same symbol might denote different constants in each equation.
If $f$ and $g$ are positive functions, $ f \asymp g$ means that $C_1 f \leq g \leq C_2 f$. 
We sometimes use the notation $f \ls g$ ($f \gs g$) to express that there
is a constant $C>0$ such that $ f \leq C g$ ($ f \geq C g$).



The standard basis of $\lp$ is $e_k =(\delta_{jk})_{j \in \bzd}$,  $\inprod{x,y}=\sum_{k \in \bzd}
x(k){y(k)}$ is the standard dual pairing between $\lp$ and its dual $\lp [p']$.  If $p=2$, we define the scalar product as
$\inprod{x,y}=\sum_{k \in \bzd} x(k)\overline{y(k)}$. This should not lead to confusion.

A submultiplicative weight on  on $\bzd$ is a positive function $v:\bzd \to \br$ such that $v(0)=1$
and 
$v(x+y) \leq v(x)v(y)$ for $x,y \in \bzd$.  The standard  polynomial weights are
$v_r(x) = (1+\abs x)^r$ for $r \geq 0$. The weighted spaces $\lpw [p] w$ are defined by  the norm 
$\norm{x}_{\lpw [p]  w}=\norm{x w}_{\lp}$. If $w=v_r$ we  will simply write $\norm{x}_{\lpw [p]  r}$.


The Schwartz space of rapidly decreasing functions on \brd\ is denoted by $\schwartz(\brd)$. The Fourier transform of $f
\in \schwartz(\brd)$ is $\mF f (\omega) = \hat f (\omega)=\int_{\brd} f(x) e^{-2 \pi i \omega \cdot x} \; dx$. This definition is
extended by duality to $\schwartz'(\brd)$, the space of tempered
distributions. The same symbols are also used for the Fourier transform on $\bzd$ and $\btd$. 

The \cont\ embedding of the normed space $X$  into  the normed space $Y$ is denoted as $X \inject Y$.
The operator norm of a bounded linear mapping $A \colon X \to Y$  is  $\norm{A}_{X \to Y}$. 
In the special case of operators $A \colon \lp [2] \to \lp [2]$ we write $\norm{A}_{\bopzd}=\norm{A}_{\lp [2] \to \lp
  [2]}$ or simply $\norm{A}_{\bop}$.

We will consider Banach spaces with equivalent norms as equal.

\subsection{Inverse closed subalgebras of Banach algebras}

\label{sec:concepts-from-theory}

All \BA s are assumed to be \emph{unital}. To verify that a \BS\ \mA\ with norm $\norm{\phantom{i}}_\mA$ is a \BA\ it is sufficient to
prove that $\norm{ab}_\mA \leq C \norm{a}_\mA \norm{b}_\mA$ for some constant $C$.  The expression
$\norm{a}'_\mA=\sup_{\norm{b}_\mA =1}\norm{ab}_\mA$ is  an equivalent norm on \mA\ and satisfies $\norm{ab}'_\mA
\leq \norm{a}'_\mA \norm{b}'_\mA $.

\begin{defn}[Inverse-closedness]
  If $\mA \subseteq \mB$ are  Banach algebras with common multiplication and identity, we call \mA\ \emph{inverse-closed}
  in \mB, if
  \begin{equation} \label{eq_3} a \in \mA \text{ and } a^{-1} \in \mB \quad \text{implies}\quad a^{-1} \in \mA.
  \end{equation}
\end{defn}
Inverse-closedness is equivalent to \emph {spectral invariance}. This means that the spectrum 
$ \sigma_\mA(a) =\set{\lambda \in \bc \colon a-\lambda \text{ not invertible in } \mA} $ of an element $a \in \mA$ satisfies
\begin{equation*}
  \sigma_\mA(a)= \sigma_\mB (a), \quad \quad \text{ for all } \, a \in \mA.
\end{equation*}
The relation of \IC ness is transitive: If \mA\ is \IC\ in \mB\, and \mB\ is \IC\ in \mC, then \mA\ is \IC\ in \mC.
%
\begin{rem} 
  Spectral invariance is a generalization of \emph{Wiener's Lemma}, which 
  states precisely that the \emph{Wiener algebra} $\mF \ell^1(\bzd)$ of absolutely convergent Fourier series is
  \IC\ in $C(\btd)$. See~\cite{Gro09} for a concise overview of the importance of the concept of \IC ness.
\end{rem}

\subsection{Examples of Matrix Algebras}
\label{sec:exma}
To describe the most common forms of off-diagonal decay, let us fix some notation. 
An infinite matrix $A$ over \bzd\ is a function $A:\bzd \times
\bzd \to \bc$.  The $m$-th side diagonal of $A$ is the matrix $\hat A (m)$ with entries
\begin{equation*}
  \label{eq:8}
  \hat A(m)(k,l)=\begin{cases}
    A(k,l),& \quad k-l=m,\\
    0,     & \quad \text{otherwise}.
  \end{cases}
\end{equation*}
A matrix $A$ is \emph{banded} with bandwidth $N$, if
\begin{equation*}
  \label{eq:bandmatrix}
  A=\sum_{\abs m _\infty\leq N} \hat A (m).
\end{equation*}

 Let $ 1 < p \leq \infty$,  $r > d(1-1/p)$, or  $p=1$, and $r \geq 0$. The space $\cc r$ consists of all matrices $A$ with finite norm
\[
\norm{A}_{\cc r} = \biggl( \sum_{ k \in \bzd} \sum_{l \in \bzd}\abs{A(l,l-k)}^p (1+\abs{k})^{r p}\biggr)^{1/p}
\]
with the standard change for $p=\infty$. 

The following special cases have obtained particular interest.  The \emph{Jaffard algebra} $\mC^\infty_r$ consists of the matrices $A$ for which
 $ |A(k,l)| \leq C (1+|k-l|)^{-r} $, so the norm of $\mC^\infty_r$ describes polynomial decay off the
diagonal. 

The \emph{algebra of convolution-dominated matrices} $\mC^1_r, r\geq0$, (sometimes called the Baskakov-Gohberg-Sj\"ostrand algebra) consists of all
matrices $A$, for which there is a $h \in \ell^1_r(\bzd)$  such that $\abs{A x(k)} \leq h * \abs x (k)$, where $\abs
x$ denotes the vector with components $(\abs{x(k)})_{k \in \bzd}$.

If $1 < p \leq \infty$ and $r > d(1-1/p)$  or $p=1$ and  $r\geq0$ the \emph{Schur algebra} $\mS_r^p$ is defined by the norm
\[
  \label{eq:schurnorm}
  \norm{A}_{\mS^p_r} = \max \Big\{ \sup_{k\in \bzd} \bigl(\sum_{l\in \bzd}  \abs{A(k,l)}^pv_r(k-l)^p \bigr)^{1/p},
  \sup _{l\in \bzd}\bigl(\sum_{k\in \bzd} \abs{A(k,l)}^p v_r(k-l)^p \bigr)^{1/p}\Big\}
\]
with the standard change for $p= \infty$. 
\begin{rems}
  (1) The scales \schur [p] r and $\mC^p_r$ are identical at the endpoint $p=\infty$, i.e. $\schur [\infty] r = \mC^\infty_r$.
  (2) It follows immediately from the definitions that ${\mC^p_r} \inject {\mS^p_r} $.
\end{rems}
     
We note that the norms above depend only on the absolute values of the matrix entries. Precisely,  a matrix norm on $\mA $ is called
\emph{solid}, if $B \in \mA$ and $\abs{A(k,l)} \leq \abs{B(k,l)}$ for all $k,l$ implies $A \in \mA$ and $\norm{A}_{\mA}\leq\norm{B}_{\mA}$.  In
particular, for a solid norm we have $\| \, |A| \|_{\mA } = \|A\|_{\mA}$, where $|A|$ is the matrix with entries $|A|(k,l) = |A(k,l)|$ for $k,l\in
\mathbb{Z}^d $.

The following result summarizes the main properties of the matrix classes $ \mC^p_r$ and $\mS^p_r $. See \cite{Bas90, GL04a,Jaffard90,Sun05} for
proofs.
\begin{prop} \label{fundamental} Assume that $\mA $ is one of the matrix classes $\mC^p _r$ or $\mS^p _r $ for $r>d(1-1/p)$, if $p>1$, and $r \geq 0$, if
  $p=1$.
 Then $\mA $ is a solid Banach $*$-algebra with respect to matrix multiplication and taking adjoints as the involution.
     Every $\mA $ is continuously embedded into the algebra $\mB (\ell ^p(\bzd)) $ of bounded operators on $\ell ^p(\mathbb{Z}^d )$ for $ 1\leq
    p \leq\infty$.
     With the exception of $\schur [1] 0$ \cite{tessera10} every class $\mA $ is inverse-closed in $\mB (\ell ^p(\bzd)) $, $ 1\leq p \leq\infty$. In particular, $\mA $ is symmetric.
\end{prop}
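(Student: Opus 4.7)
The plan is to verify the algebraic and topological structure by direct computation, then establish $\ell^p$-boundedness through a reduction to Schur's test, and finally invoke the spectral-invariance results from the cited literature.

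Completeness and solidity of each norm are immediate, since in all four families the defining norm is a weighted $\ell^p$-type quantity depending only on $\abs{A(k,l)}$. The involution is isometric because the polynomial weight satisfies $v_r(k-l)=v_r(l-k)$, so $\norm{A^*}_\mA=\norm{A}_\mA$ in every case; consequently $\mA$ is a $*$-algebra once closure under the product is established.

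For the algebra property of $\schur[p]{r}$, I start from $\abs{(AB)(k,l)}\le\sum_m\abs{A(k,m)}\abs{B(m,l)}$ and split the weight using submultiplicativity,
\[
v_r(k-l)\le v_r(k-m)\,v_r(m-l).
\]
Estimating the row $\ell^p$-norm by Minkowski's inequality and the column $\ell^p$-norm symmetrically then yields $\norm{AB}_{\schur[p]{r}}\le C\norm{A}_{\schur[p]{r}}\norm{B}_{\schur[p]{r}}$, with the threshold $r>d(1-1/p)$ for $p>1$ being exactly what makes an auxiliary diagonal summation converge via H\"older. For $\cc{r}$, the analogous estimate organized by side diagonals reduces to a convolution bound in $\lpw[1]{r}$, and Young's inequality together with the fact that $\lpw[1]{r}$ is itself a convolution algebra gives the submultiplicative bound. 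The continuous embedding $\mA\hookrightarrow\mB(\lp)$ for every $p\in[1,\infty]$ follows from Schur's test once one shows $\mA\hookrightarrow\schur[\infty]{0}$; for $\schur[p]{r}$ with $p>1$ this is H\"older,
\[
\sum_l\abs{A(k,l)}\le\Bigl(\sum_l\abs{A(k,l)}^p v_r(k-l)^p\Bigr)^{1/p}\Bigl(\sum_l v_r(k-l)^{-p'}\Bigr)^{1/p'},
\]
whose last factor converges precisely because $rp'>d$. For $\cc[1]{r}$, convolution dominance by some $h\in\lpw[1]{r}$ yields boundedness on every $\lp$ directly via Young's inequality, and $\cc[\infty]{r}=\schur[\infty]{r}$ has absolutely convergent row and column sums as soon as $r>d$.

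The main obstacle is inverse-closedness, which is proved by different techniques for each family, and which I would cite as black boxes rather than reproduce: Jaffard's commutator argument for $\cc[\infty]{r}$, Baskakov's Bochner-Phillips approach for $\cc[1]{r}$, and the Gr\"ochenig-Leinert and Sun iteration/interpolation arguments for $\schur[p]{r}$. The exclusion of $\schur[1]{0}$ is known to be sharp by Tessera's counterexample. Symmetry of $\mA$ then follows formally: once $\mA$ is inverse-closed in the $C^*$-algebra $\bop$, the $\mA$-spectrum agrees with the $\bop$-spectrum, and the latter is contained in $[0,\infty)$ for any element of the form $a^*a$. The fact that every family requires a separate technique is precisely what motivates the unified smoothness-theoretic framework developed in the remainder of this paper.
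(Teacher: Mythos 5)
Your proposal is correct and takes essentially the same route as the paper, which gives no argument of its own and simply refers to \cite{Bas90, GL04a, Jaffard90, Sun05} for these facts; your sketches of the elementary parts (solidity, the algebra property via submultiplicativity of $v_r$ and weighted Young/H\"older with the threshold $r>d(1-1/p)$, Schur's test for the $\ell^p$-embedding, and symmetry deduced from inverse-closedness in the $C^*$-algebra $\bop$) are the standard arguments, with the deep inverse-closedness statements correctly treated as black boxes from the literature. One small slip: the embedding needed to run Schur's test is $\mA \hookrightarrow \mS^1_0$ (uniformly bounded row and column $\ell^1$-sums), not $\mA \hookrightarrow \mS^\infty_0$, although the displayed H\"older estimate you give is exactly the right bound and shows you meant the former.
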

In the sequel we will construct algebras that are inverse-closed in one of the standard algebras $ \mC^p _r, \mS^p _r$, and are therefore  inverse-closed in $\bopzd$ by Proposition~\ref{fundamental}.

%
We generalize the definitions above.
\begin{defn}
  A \emph{\MA} \mA\ (over \bzd) is a \BA\ of matrices that is continuously embedded in \bopzd.
\end{defn}
We drop the reference to the index set \bzd\ whenever
possible.
\begin{lem} 
  If \mA\ is a \MA, the selection of matrix elements is \cont.
 \end{lem}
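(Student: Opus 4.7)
The plan is to deduce continuity of the coordinate functionals $A \mapsto A(k,l)$ from the continuous embedding $\mA \inject \bopzd$ that comes built into the definition of a matrix algebra, together with the familiar fact that on $\bopzd$ the entries are recovered by inner products with standard basis vectors.

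More concretely, fix $k,l \in \bzd$. By hypothesis there is a constant $C>0$ with $\norm{A}_{\bopzd} \leq C \norm{A}_\mA$ for every $A \in \mA$. For $A \in \bopzd$ the identity $A(k,l) = \inprod{A e_l, e_k}$ holds (this is just the definition of the matrix of an operator in the standard basis of $\lp[2]$), so Cauchy-Schwarz gives
\[
  \abs{A(k,l)} = \abs{\inprod{A e_l, e_k}} \leq \norm{A e_l}_{\lp[2]} \norm{e_k}_{\lp[2]} \leq \norm{A}_{\bopzd} \leq C \norm{A}_\mA .
\]
Thus the linear functional $A \mapsto A(k,l)$ is bounded on $\mA$, hence continuous, which is exactly the claim.

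There is no real obstacle here: the only thing to check is that elements of $\mA$, regarded as operators in $\bopzd$, really are represented by the matrix they are named after, which is the content of saying that $\mA$ is a \emph{matrix} algebra continuously embedded in $\bopzd$. The role of the lemma is just to record the quantitative bound $\abs{A(k,l)} \leq C \norm{A}_\mA$ that will be used implicitly throughout the paper whenever one wants to pass from norm convergence in $\mA$ to entrywise convergence.
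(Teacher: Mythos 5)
Your argument is correct and coincides with the paper's own one-line proof: both extract the entry as $\inprod{Ae_\cdot,e_\cdot}$, bound it by the operator norm on $\ell^2(\bzd)$, and then use the continuous embedding $\mA\inject\bopzd$ from the definition of a matrix algebra. The only (immaterial) difference is the indexing convention $A(k,l)=\inprod{Ae_l,e_k}$ versus the paper's $\inprod{Ae_k,e_l}$.
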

\begin{proof}
  $\abs{A(k,l)}=\abs{\inprod{Ae_k,e_l}} \le \norm{A}_{\bop} \le C \norm{A}_\mA$.
 \end{proof}

 \subsection{Approximation Spaces and Algebras}
 \label{sec:appr-spac-algebr}

Let  the index set $\Lambda$  be either $\br^+_0$ or $\bn_0$. An \emph{approximation scheme} on the \BA\ \mA\
is a family $(X_\sigma)_{\sigma \in \Lambda}$ of closed subspaces of \mA\ that satisfy
$ X_0=\set{0}$, $ X_\sigma \subseteq X_\tau$  for $\sigma \leq \tau$,  and
$  X_\sigma \cdot X_\tau\subseteq X_{\sigma+\tau}$, $\sigma,\tau\in \Lambda$.
If \mA\ possesses an involution, we further assume that
$ \one \in X_1$  and $ X_\sigma=X^*_\sigma$ for all $ \sigma \in \Lambda$.
The \emph{$\sigma$-th approximation error} of  $a \in \mA$ by $X_\sigma$ is
$  E_\sigma(a)=\inf_{x \in X_\sigma} \norm{a-x}_\mA$.
We define approximation spaces $\app p r  \mA$ by the  norm 
\begin{equation}
  \label{eq:appspace}
  \norm{a}_{\mE_r^p}^p= \sum_{k=0}^\infty {E_k(a)^p}(k+1)^{r p}\frac{1}{k+1},  \text{ for } \Lambda=\bn_0 \, ,
 \end{equation}
for $1\leq p < \infty $ with the  obvious change for $p=\infty$. If $\Lambda=\br_0^+$  an equivalent norm is 
 $
 \norm{a}_{\mE_r^p}^p=\int_0^\infty {E_\sigma(a)^p}(\sigma+1)^{r p}\frac{\dd \sigma}{\sigma+1} 
 $. 
Algebra properties of \as s of approximation spaces are discussed in~\cite{Almira06, grkl10}. In particular, in~\cite{grkl10} the following result is proved.
\begin{prop} \label{appspIC} If \mA\ is a symmetric \BA\ and $(X_\sigma)_{ \sigma \in \Lambda}$ an
  approximation scheme, then $\mE_r^p(\mA)$ is \IC\ in $\mA$.
\end{prop}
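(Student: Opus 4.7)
The plan is to prove inverse-closedness via a Hulanicki-type spectral-radius identity, which reduces everything to a Leibniz-type inequality in $\mathcal{E}_r^p(\mathcal{A})$ that follows directly from the multiplicativity of the approximation scheme.

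First I would verify that $\mathcal{E}_r^p(\mathcal{A})$ is a Banach $*$-algebra continuously embedded in $\mathcal{A}$. For $x \in X_\sigma$, $y \in X_\sigma$ nearly optimal for $a,b$ respectively, the identity $ab - xy = a(b-y) + (a-x)y$ combined with $xy \in X_{2\sigma}$ (from $X_\sigma \cdot X_\sigma \subseteq X_{2\sigma}$) yields the Jackson-type estimate
\begin{equation*}
E_{2\sigma}(ab) \;\leq\; \|a\|_\mathcal{A}\, E_\sigma(b) \;+\; 2\|b\|_\mathcal{A}\, E_\sigma(a).
\end{equation*}
Taking $p$-th powers, summing against the weight $(\sigma+1)^{rp-1}$, and using $(2\sigma+1)^{rp-1} \asymp (\sigma+1)^{rp-1}$ yields the product estimate $\|ab\|_{\mathcal{E}_r^p} \lesssim \|a\|_\mathcal{A}\|b\|_{\mathcal{E}_r^p} + \|b\|_\mathcal{A}\|a\|_{\mathcal{E}_r^p}$. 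Together with the continuous embedding $\|a\|_\mathcal{A} = E_0(a) \leq \|a\|_{\mathcal{E}_r^p}$ (the $k=0$ summand), this makes $\mathcal{E}_r^p(\mathcal{A})$ a Banach algebra; the hypothesis $X_\sigma = X_\sigma^*$ gives $E_\sigma(a^*) = E_\sigma(a)$, so $\mathcal{E}_r^p(\mathcal{A})$ is $*$-closed.

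Next I would establish the spectral-radius identity $r_{\mathcal{E}_r^p}(a) = r_\mathcal{A}(a)$ for every $a \in \mathcal{E}_r^p(\mathcal{A})$. Specializing the product estimate to $b = a$ gives the Leibniz-type inequality $\|a^2\|_{\mathcal{E}_r^p} \leq C\|a\|_\mathcal{A}\|a\|_{\mathcal{E}_r^p}$, which iterated along dyadic powers gives
\begin{equation*}
\|a^{2^k}\|_{\mathcal{E}_r^p} \;\leq\; C^k\, \|a\|_\mathcal{A}^{2^k-1}\, \|a\|_{\mathcal{E}_r^p}.
\end{equation*}
Extracting $2^k$-th roots and letting $k \to \infty$ (after passing to equivalent submultiplicative norms) gives $r_{\mathcal{E}_r^p}(a) \leq \|a\|_\mathcal{A}$. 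Applying this to $a^n$ and using $r_{\mathcal{E}_r^p}(a^n) = r_{\mathcal{E}_r^p}(a)^n$ yields $r_{\mathcal{E}_r^p}(a) \leq \|a^n\|_\mathcal{A}^{1/n} \to r_\mathcal{A}(a)$; the reverse inequality is automatic from the continuous embedding $\mathcal{E}_r^p(\mathcal{A}) \hookrightarrow \mathcal{A}$.

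Finally I would translate this into inverse-closedness via the symmetry of $\mathcal{A}$. Let $a \in \mathcal{E}_r^p(\mathcal{A})$ be invertible in $\mathcal{A}$ and set $b = a^*a \in \mathcal{E}_r^p(\mathcal{A})$; then $b$ is self-adjoint and invertible in $\mathcal{A}$, so by symmetry $\spec_\mathcal{A}(b) \subseteq [c,d]$ with $0 < c \leq d$. Choosing $\lambda = (c+d)/2$ yields $r_\mathcal{A}(b - \lambda) \leq (d-c)/2 < \lambda$, and the spectral-radius identity gives $r_{\mathcal{E}_r^p}(b - \lambda) < \lambda$. Hence the Neumann series $b^{-1} = \lambda^{-1}\sum_{n \geq 0}\bigl(\lambda^{-1}(\lambda - b)\bigr)^n$ converges in the $\mathcal{E}_r^p$-norm, so $b^{-1} \in \mathcal{E}_r^p(\mathcal{A})$, and $a^{-1} = b^{-1}a^* \in \mathcal{E}_r^p(\mathcal{A})$. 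The main obstacle is the weighted-bookkeeping step in the first paragraph, which turns the multiplicativity $X_\sigma \cdot X_\sigma \subseteq X_{2\sigma}$ into a genuine Leibniz rule on $\mathcal{E}_r^p(\mathcal{A})$; once this is secured, the spectral-radius iteration and the symmetric-algebra reduction close the argument without further analysis.
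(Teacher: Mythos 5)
Your argument is correct, and it is essentially the route taken in the reference \cite{grkl10} that the paper cites for this proposition: the Jackson-type estimate $E_{2\sigma}(ab)\lesssim \norm{a}_\mA E_\sigma(b)+\norm{b}_\mA E_\sigma(a)$ coming from $X_\sigma\cdot X_\sigma\subseteq X_{2\sigma}$ yields the Leibniz-type (differential) norm inequality, Brandenburg's iteration gives $r_{\app p r \mA}=r_\mA$, and the symmetry of \mA\ together with the $a^*a$-reduction and a Neumann series closes the argument. No gaps worth flagging.
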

 If \mA\ is a \MA\ and 
 $\mT_N=\mT_N(\mA)$ denotes the set of matrices in \mA\ with bandwidth smaller than $N$,
\[
\mT_N=\set{A \in \mA \colon A=\sum_{\abs k_\infty < N}\hat A (k)}
\]
then the sequence $(\mT_N)_{N\geq0}$ is an approximation scheme for \mA.
The closure of all banded matrices in \mA\ is the space of \emph{band-dominated matrices} in \mA
~\cite{Rabinovich98, rrs04}. 

In \cite{grkl10} we obtained the following constructive description of $\app \infty r{\mC^1_0}$:
\emph{The approximation space $\mE^\infty_r(\mC^1_0)$
 consists of all matrices $A$ satisfying
}  \begin{equation*}\label{eq:44}
    \norm{\hat A (0)}_{\bop}<\infty, \quad
   2^{rk}\smashoperator[l]{ \sum_{2^k \leq \abs l < 2^{k+1}}}\norm{\hat A
      (l)}_{\bop} = 2^{r k }\smashoperator[l]{\sum_{2^k \leq \abs l < 2^{k+1}}}  \sup _{m\in
      \bzd} |A(m,m-l)|  \leq C 
  \end{equation*}
for all $k\geq 0$.
Theorem~\ref{prop:polappHMAs} is a more general result of this type.

\section{Smoothness in Banach Algebras}
\label{sec:smoothnessBA}
An important observation in~\cite{grkl10}  was that
the \odd\ of matrices can be described by \emph{smoothness properties}, using derivations and the action of the automorphism group 
$
\chi_t(A)=\sum \hat A(k) e^{2 \pi i k t}
$.
 In our treatment  we focused on \HZ\ spaces  and on spaces of $m$ times differentiable elements. Now we extend our research and cover the more general Besov and Bessel potential spaces. We also establish the isomorphism between Besov spaces and approximation spaces of polynomial order. In all cases we obtain results on the \IC ness of the smoothness spaces in their defining algebra.

It turns out that the investigations can be carried out with no additional effort for \BA s with the bounded action of a $d$-parameter automorphism group. Here we obtain new methods for the construction of scales of \IC\ subalgebras.
\subsection{Automorphism Groups and Continuity}
\label{Automorphism groups and continuity}

Let \mA\ be a \BA. A ($d$-parameter) \emph{automorphism group}
 acting on \mA\   is a set of \BA\ automorphisms $\Psi=\set{\psi_t}_{t \in \brd}$ of
\mA\ that satisfy the group properties
\begin{equation*}
  \psi_s \psi_t=\psi_{s+t} \quad \text{for all} \quad s,t \in \br^d.
\end{equation*}
If \mA\ is a $*$-algebra we assume that $\Psi$ consists of $*$-automorphisms.  In order to simplify some proofs, we assume that
$\Psi$ is  \emph{uniformly bounded}, that is,
\[
M_\Psi= \sup_{t\in \brd}\norm{\psi_t}_{\mA \to \mA} < \infty \, .
\]
The abstract theory works for  more general group actions~\cite{Amrein96, grushka07, Torba08}. 

An element $a \in\mA$ is (strongly) \emph{\cont}, if
\begin{equation}
  \label{strongcont}
  \norm{\psi_t(a)-a}_\mA \to 0 \text{ for } t \to 0.
\end{equation}
The  \cont\ elements of \mA\ are denoted by $C(\mA)$.

For $t \in \brd\setminus\set{0}$ the \emph{generator} $\delta_t$ is
\begin{equation}\label{eq:generator}
  \delta_t (a) = \lim_{h \to 0} \frac{\psi_{ht}(a)-a }{h}
\end{equation}
The domain $\mD(\delta_t, \mA)$ of $\delta_t$ is the set of all $a \in \mA$ for which this limit exists. 
The \emph{canonical generators} of $\Psi$ are  $(\delta_{e_k})_{1\le k\le d}$, and
 $\Psi$ is  the \emph{automorphism group generated by} $(\delta_{e_k})_{1\le k\le d}$. If $\alpha \in \bnd_0$ is a multi-index, then $\delta^\alpha= \delta^{\alpha_1}_{e_1} \cdots \delta^{\alpha_d}_{e_d}$. In \cite[3.15]{grkl10} precise conditions are given, under which condition two derivations commute. In particular, this is true for all cases that will be encountered in this text.
Each generator $\delta_t$ is a closed derivation, that is
$\delta_t(ab)= a \delta_t(b) + \delta_t(a) b$ for all $a,b \in \mD(\delta_t,\mA)$, and the operator $\delta_t$ is a closed operator on its domain.
If \mA\ is a
Banach $*$-algebra, then $\delta_t$ is a $*$-derivation~\cite{bratrob87}. 
\begin{prop}[{\cite[3.4]{grkl10}}]
  If \mA\ is symmetric, and $\delta$ is a closed $*$-derivation then $\mD(\delta_t, \mA)$ is \IC\ in \mA.
\end{prop}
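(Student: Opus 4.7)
The plan is as follows. The Leibniz rule for the derivation $\delta$ shows that $\mD(\delta,\mA)$ is a subalgebra of $\mA$, and since $\delta$ is a $*$-derivation it is also closed under the involution. It therefore suffices to prove that any $a \in \mD(\delta,\mA)$ invertible in $\mA$ satisfies $a^{-1} \in \mD(\delta,\mA)$. Writing $a^{-1} = (a^*a)^{-1} a^*$, and noting that $a^* \in \mD(\delta,\mA)$ while $\mD(\delta,\mA)$ is closed under products, the problem reduces to showing $b^{-1} \in \mD(\delta,\mA)$ for the self-adjoint invertible element $b = a^*a \in \mD(\delta,\mA)$.

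The decisive use of the symmetry hypothesis comes next: in a symmetric Banach $*$-algebra one has $\sigma_\mA(a^*a) \subseteq [0,\infty)$, and together with the invertibility of $b$ this forces $\sigma_\mA(b) \subseteq [m,M]$ for some $0 < m \le M$. Consequently the resolvent set $\varrho_\mA(b) = \bc \setminus \sigma_\mA(b)$ is connected. I would then introduce
\[
U = \bigl\{ \lambda \in \varrho_\mA(b) : (\lambda\one - b)^{-1} \in \mD(\delta,\mA) \bigr\}
\]
and show that $U$ is nonempty, open, and relatively closed in $\varrho_\mA(b)$; connectedness will then give $U = \varrho_\mA(b)$, and in particular $0 \in U$, which is precisely $b^{-1} \in \mD(\delta,\mA)$.

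Nonemptiness and openness are handled by Neumann series combined with the closedness of $\delta$. For $|\lambda| > \|b\|_\mA$ one has $(\lambda\one - b)^{-1} = \lambda^{-1}\sum_{n\ge 0}\lambda^{-n} b^n$ in $\mA$, and the iterated Leibniz rule gives the estimate $\|\delta(b^n)\|_\mA \le n\|b\|_\mA^{n-1}\|\delta(b)\|_\mA$, so the formal series for $\delta$ of the partial sums converges absolutely; closedness of $\delta$ then places the limit in $\mD(\delta,\mA)$. Openness at $\lambda_0 \in U$ follows from the analogous expansion
\[
(\lambda\one - b)^{-1} = \bigl[\one + (\lambda-\lambda_0)(\lambda_0\one - b)^{-1}\bigr]^{-1}(\lambda_0\one - b)^{-1},
\]
valid for $|\lambda-\lambda_0|$ small, together with the same series argument applied to $(\lambda_0\one - b)^{-1} \in \mD(\delta,\mA)$.

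For relative closedness, let $\lambda_n \to \lambda$ in $\varrho_\mA(b)$ with $\lambda_n \in U$. Applying $\delta$ to the identity $(\lambda_n\one - b)(\lambda_n\one - b)^{-1} = \one$ and using the Leibniz rule yields
\[
\delta\bigl((\lambda_n\one - b)^{-1}\bigr) = (\lambda_n\one - b)^{-1}\,\delta(b)\,(\lambda_n\one - b)^{-1},
\]
and the norm continuity of the resolvent $\mu \mapsto (\mu\one - b)^{-1}$ in $\mA$ shows that both sides converge in $\mA$; closedness of $\delta$ then puts $(\lambda\one - b)^{-1}$ into $\mD(\delta,\mA)$. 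The main obstacle in this plan is precisely the reduction to an element with real, strictly positive spectrum: the symmetry of $\mA$ is indispensable here, because in its absence $\sigma_\mA(b)$ might encircle the origin and the continuation of the resolvent along a connected path from infinity to $0$ would break down.
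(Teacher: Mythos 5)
Your proof is correct, but it reaches the conclusion by a different mechanism than the source. The paper does not reprove this statement; it quotes \cite[3.4]{grkl10}, whose argument (in the spirit of Bratteli--Robinson) makes the same reduction you do --- pass to $b=a^*a$, use symmetry plus invertibility to force $\sigma_\mA(b)\subseteq[m,M]$ with $m>0$ --- but then finishes in one stroke: for $c=\one-b/M$ one has spectral radius $\rho(c)<1$, so $b^{-1}=M^{-1}\sum_{n\ge0}c^n$, and the termwise differentiated series converges because $\|\delta(c^n)\|\le\sum_{k=0}^{n-1}\|c^k\|\,\|\delta(c)\|\,\|c^{n-1-k}\|\lesssim n(\rho(c)+\epsilon)^{n-1}$ by the spectral radius formula; closedness of $\delta$ then yields $b^{-1}\in\mD(\delta,\mA)$. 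You instead run an open-and-relatively-closed continuation of the resolvent through the connected set $\bc\setminus\sigma_\mA(b)$, using Neumann series only in the regime where they converge geometrically in norm, together with the resolvent identity $\delta\bigl((\lambda\one-b)^{-1}\bigr)=(\lambda\one-b)^{-1}\delta(b)(\lambda\one-b)^{-1}$ and closedness of $\delta$ for the relative-closedness step. Both proofs spend the symmetry hypothesis at exactly the same point (putting $\sigma_\mA(a^*a)$ into a compact subset of $(0,\infty)$, which in the cited proof makes $\rho(c)<1$ and in yours makes the resolvent set connected with $0$ reachable from infinity); the cited proof is shorter, while yours avoids the spectral radius formula and its $\epsilon$-bookkeeping. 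All of your individual steps (the Leibniz estimate, closedness of $\delta$ applied to partial sums, norm continuity of the resolvent) check out, modulo the harmless renorming that makes $\|\cdot\|_\mA$ submultiplicative with constant $1$.
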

 We call the action of $\Psi$ periodic if $\psi_{t+e_k}=\psi_t$ for all $1 \leq k \leq d$ and all $t \in \brd$.
\begin{defn}
A \MA\ \mA\ is called \emph{\hmg}~ \cite{Deleeuw75, Deleeuw77}, if 
    \begin{equation*} \chi_t \colon A \mapsto M_t A M_{-t},\; \chi_t(A)(k,l)=\cexp [(k-l) \cdot t]A(k,l) \text{ for all } k,l \in
    \bzd \text{ and } t \in \brd
  \end{equation*}
   define  uniformly bounded mappings on \mA, where   $M_t, t\in \br^d$, is the\emph{ modulation operator} $M_t x (k)= \cexp [k \cdot t] x(k)$, $ k\in \bzd$.
\end{defn}
Clearly $\chi=\set{ \chi _t}_{t \in \brd}$ defines an automorphism group on \mA. The algebra of bounded operators on $\ell^2$ is a \HMA, and so are all solid \MA s. 

In the literature on group actions it is often assumed that $\Psi $ is strongly continuous on all of \mA,\
  i.e. $\mA = C(\mA )$.  This is no longer true for most  matrix algebras, and in general $C(\mA )$ is a closed and \IC\ subalgebra of \mA~\cite[3.14]{grkl10}. 

 In~\cite{grkl10} the spaces $C(\mA)$ have been identified for the algebras  $\mC^1_r, \mC^\infty_r, \mS^1_r$. We use the opportunity to state the full result for  the algebras $\mC^p_r$ and $\mS^p_r$.
 \begin{prop} 
If $\mA$ is one of the algebras $\mC^\infty_r,\mS^p_r, \bopzd$ for $r \geq 0$, then $C(\mA) \neq \mA$.   

\[
C(\mC^p_r) =  \Ck [p]_r,  \quad 1 \leq p < \infty \,, 
\]

 \[   C(\Ck [\infty]_r) = 
 \set{A \in \Ck [\infty]_r \colon \lim _{\abs k_\infty\to \infty } \norm{\hat   A(k)}_{\Ck [\infty]_r} = \lim_{ \abs k_\infty \to \infty} \norm{\hat A (k)}_{\bop}(1+|k|)^r = 0 }
 \]

 \begin{align*}
   C(\schur [p] r)
   =\set{A \in \schur [p] r \colon  &\lim_{N \to \infty} \sup_{k\in \bzd} \sum_{\abs s_\infty > N} \abs{A(k,k-s)}^p (1+ \abs s)^{r p} =0 
     \text{ and } \\
     &\lim_{N \to \infty} \sup _{k\in\bzd} \sum_{\abs s_\infty > N} \abs{A(k-s,k)}^p(1+ \abs s)^{rp} =0}  \,.
 \end{align*}
\end{prop}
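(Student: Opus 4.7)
The main tool is Fej\'er summation against the action $\chi$. Since $\chi_{t+e_j}=\chi_t$ for every canonical basis vector, $\chi$ may be regarded as a uniformly bounded action of the torus $\btd$. For $A\in\mA$ define
\[
\sigma_N A := \int_{\btd} F_N(t)\, \chi_t A\, dt,
\]
where $F_N$ is the $d$-dimensional Fej\'er kernel on $\btd$. Since entry evaluation $A\mapsto A(k,l)$ is continuous on $\mA$ (the lemma in Section~\ref{sec:exma}), one may pull the integral inside to obtain $(\sigma_N A)(k,l)=\hat F_N(k-l)\,A(k,l)$, which vanishes for $|k-l|_\infty\geq N$; hence $\sigma_N A$ is banded. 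When $A\in C(\mA)$ the integrand $t\mapsto\chi_tA$ is norm-continuous, making $\sigma_N A$ a genuine Bochner integral in $\mA$, and the standard summability-kernel estimate
\[
\norm{\sigma_N A-A}_\mA \leq \int_{\btd} F_N(t)\,\norm{\chi_tA-A}_\mA\,dt\to 0
\]
applies. Combined with the closedness of $C(\mA)$ recorded in \cite[3.14]{grkl10}, and the direct observation that every banded matrix lies in $C(\mA)$ (only finitely many entries, each varying continuously with $t$), this yields the identity
\[
C(\mA)=\overline{\set{B\in\mA\colon B\text{ is banded}}}^{\,\norm{\cdot}_\mA}.
\]

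It then remains to compute this closure in each of the three algebras. For any $A$ the $N$-truncation $A_N:=\sum_{|m|_\infty\leq N}\hat A(m)$ is banded, and direct evaluation of the relevant norms gives: for $\mC^p_r$ with $1\leq p<\infty$, $\norm{A-A_N}_{\mC^p_r}^p$ is the tail of a convergent $\ell^p$-sum, so banded matrices are dense and $C(\mC^p_r)=\mC^p_r$; for $\mC^\infty_r$, $\norm{A-A_N}_{\mC^\infty_r}=\sup_{|m|_\infty>N}\norm{\hat A(m)}_{\bop}(1+|m|)^r$, which vanishes precisely under the condition stated in the proposition; for $\schur[p]{r}$, $\norm{A-A_N}_{\schur[p]{r}}^p$ equals the maximum of the two tail suprema displayed in the statement, giving the claimed description.

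To show $C(\mA)\neq\mA$ in the three exceptional cases I exhibit explicit non-continuous elements. In $\mC^\infty_r$ the matrix $A(k,l)=(1+|k-l|)^{-r}$ satisfies $\norm{\chi_tA-A}_{\mC^\infty_r}=\sup_m|e^{2\pi i m\cdot t}-1|$; a short Dirichlet-type argument shows that this supremum tends to $2$ as $t\to 0$ (the denominator of a rational approximant to $t$ grows like $|t|^{-1}$), so $A\notin C(\mC^\infty_r)$. For $\schur[p]{r}$ one places isolated mass $(1+n)^{-r}$ at well-separated pairs $(k_n,k_n-ne_1)$ with, say, $k_n=3^n e_1$; the resulting matrix sits in $\schur[p]{r}$ with unit norm but violates the tail condition identified above. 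For $\bopzd$ the Fourier transform identifies convolution operators with $L^\infty(\btd)$-multipliers and conjugates $\chi_t$ into translation of the multiplier, so any bounded, non-uniformly-continuous multiplier supplies a non-continuous operator.

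\textbf{Main obstacle.} The delicate point is the Fej\'er averaging: one must verify that $t\mapsto\chi_tA$ is Bochner-integrable (secured by $A\in C(\mA)$), justify pulling the integral inside the continuous entry functional, and control $F_N\cdot\norm{\chi_tA-A}_\mA$ uniformly to run the summability-kernel argument. Each ingredient is routine in the classical setting, but the formalism must be set up carefully in the abstract algebra $\mA$. A secondary subtlety is the $\schur[p]{r}$ counterexample, where the supports must be arranged so that no two nonzero entries share a row or column, lest the $\schur[p]{r}$-norm blow up.
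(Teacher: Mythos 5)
Your proposal is correct and follows essentially the same route the paper intends (it defers to \cite{grkl10}): Fej\'er summability-kernel averaging identifies $C(\mA)$ with the norm closure of the banded matrices, this closure is then computed explicitly in each of the norms $\mC^p_r$, $\mC^\infty_r$, $\mS^p_r$, and explicit non-continuous elements settle the cases where $C(\mA)\neq\mA$. No gaps worth flagging.
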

 The method of proof is as in~\cite{grkl10}.
\subsection{Besov Spaces}
    \label{sec:besov}
The theory of vector valued Besov spaces is well established~\cite{bergh76,Butzer67, Peetre76, Triebel92}. 
The main results of this section are the algebra properties of vector-valued Besov spaces derived from a given \BA\ \mA. Though possibly known, we were not able to find any references, so full proofs of the results are included.\\

Let \mA\ be a \BA\ with automorphism group  $\Psi$. Define the $k$th difference operator as
$\Delta_t^k=  (\psi_t  -\id)^k$,  $t \in \brd$. 
For a step size $h>0$,
the \emph{modulus of continuity} of $a \in \mA$ is $\omega_h(a)=\omega_h^1(a)=\sup_{\abs t \leq h} \norm{\Delta_t a}_\mA$. If $k > 1$, the $k$th \emph{modulus of smoothness} of $a$  is 
$
\omega^{k}_h (a,\mA) = \omega^{k}_h (a) = \sup_{\abs t \leq h} \norm{\Delta^k_t a}_\mA
$.
\begin{defn}
  Let $1 \leq p\leq \infty$, $r >0$, $l=\floor r  +1$. The (vector valued) \emph{Besov space} $\Lambda^p_r(\mA)$ consists of all $a \in \mA$ for which the seminorm
\[
\abs{a}_{\Lambda^p_r(\mA)}= \begin{cases}
& \bigl(\int_{\br^+ } (h^{-r}\omega^l_h(a))^p \frac{dh }{h}\bigr)^{1/p} \,, \quad 1 \leq p < \infty \\
&\norm{a}_\mA +\sup_{h>0} h^{-r} \omega^l_h(a) \,, \quad p=\infty
\end{cases}
\]
is finite. The parameter $r$ is the smoothness parameter. The Besov norm is $\norm{a}_{\Lambda^p_r(\mA)}=\norm{a}_\mA +\abs{a}_{\Lambda^p_r(\mA)}$.
\end{defn}
Actually, replacing $l$ by any integer $k>\floor r$ in the preceding definition yields an equivalent norm for $\besov p r \mA$. In addition, we will need the following  norm equivalences.
 \begin{equation} \label{eq:besovnormeqs}
   \begin{split}
     \norm{a}_{\besov p r\mA} 
     &\asymp \norm{a}_\mA + \Bigl(\int_{\brd} (\abs{t}^{-r}\norm{\Delta^k_t a}_\mA)^p \frac{dt }{\abs t ^d}\Bigr)^{1/p} \\
     &\asymp \norm{a}_\mA + \Bigl(\sum_{l=0}^\infty \bigl( 2^{r l} \omega_{2^{-l}}^k(a) \bigr)^p \Bigr)^{1/p} \,.
   \end{split}
\end{equation}
If $l \in \bn_0$, and $l \leq r$,  these norms are further equivalent to
\[
 \norm{a}_\mA+ \sum_{\abs \alpha =l} \norm{\delta^\alpha (a)}_{\besov p {r - l} \mA} \, .
\]
The Besov spaces  $\Lambda^p_r(\mA)$ are \BS s for all $1 \leq p\leq \infty$ and $r>0$. 
If $1 \leq p, q \leq \infty $ and $0 <r < s  $, then 
$\besov p s \mA \inject \besov q r \mA$.
If $p<q$ then $ \besov p r \mA \inject \besov q r \mA$.
See, e.g.,~\cite{bergh76, Butzer67,Peetre76,Triebel92} for these and other basic properties. 
\begin{lem}
  If $\mA$ is a \BA\ with (bounded) automorphism group $\Psi$,  then $\Psi$ is a  bounded automorphism group on  $\besov p r \mA$ for every  $1 \leq p \leq \infty$, and $r>0$.
\end{lem}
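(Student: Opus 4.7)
The plan is to reduce everything to a single observation: since $\Psi$ is an abelian group ($\psi_s\psi_t=\psi_{s+t}=\psi_t\psi_s$), every $\psi_s$ commutes with the difference operator $\Delta_t^k=(\psi_t-\id)^k$. Hence for any $a\in\besov p r \mA$ and any $t\in\brd$,
\[
\psi_s\,\Delta_t^k a \;=\; \Delta_t^k \psi_s(a)\,,
\]
and the uniform boundedness hypothesis on $\Psi$ in $\mA$ gives
\[
\norm{\Delta_t^k \psi_s(a)}_\mA \;=\; \norm{\psi_s\Delta_t^k a}_\mA \;\le\; M_\Psi\,\norm{\Delta_t^k a}_\mA\,.
\]

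Taking suprema over $\abs t\le h$ yields $\omega_h^k(\psi_s(a))\le M_\Psi\,\omega_h^k(a)$ for all $h>0$, and plugging this pointwise estimate into either the integral form or the dyadic series form of the Besov seminorm (cf. the equivalences \eqref{eq:besovnormeqs}) shows
\[
\abs{\psi_s(a)}_{\besov p r \mA} \;\le\; M_\Psi\,\abs{a}_{\besov p r \mA}\,.
\]
Combined with $\norm{\psi_s(a)}_\mA\le M_\Psi\norm{a}_\mA$, this gives the uniform bound
\[
\norm{\psi_s}_{\besov p r \mA\to\besov p r \mA}\;\le\; M_\Psi\,,\qquad s\in\brd.
\]

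The remaining structural properties are inherited from $\mA$: the group law $\psi_s\psi_t=\psi_{s+t}$ holds pointwise, so it holds on $\besov p r \mA\subseteq\mA$; each $\psi_s$ is multiplicative (and $\ast$-preserving in the involutive case) because the multiplication on $\besov p r \mA$ is the restriction of that on $\mA$; and $\psi_{-s}$ serves as a bounded inverse with the same uniform bound, making $\psi_s$ an automorphism of $\besov p r \mA$.

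There is essentially no obstacle in this argument beyond observing the commutativity $[\psi_s,\Delta_t^k]=0$; the computation transfers the bound seminorm-by-seminorm. One small point of care is to use the correct version of the Besov norm: if the integral form is chosen, the monotone pointwise bound on $\omega_h^k$ translates immediately to a bound on the weighted $L^p(\br^+,dh/h)$-norm, and analogously for the dyadic series form. Note that this lemma makes no claim about strong continuity of $\Psi$ on $\besov p r \mA$; that issue will have to be addressed separately (and generally requires passing to $C(\besov p r \mA)$ in the spirit of~\cite[3.14]{grkl10}).
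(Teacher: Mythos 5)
Your argument is correct and is essentially the paper's own proof: both rest on the commutation $\Delta_t^k\psi_s=\psi_s\Delta_t^k$ (a consequence of the group law) together with the uniform bound $M_\Psi$ on $\mA$, transferred through the Besov seminorm. Your additional remarks on the group law, multiplicativity, and the absence of any strong-continuity claim are accurate but routine.
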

\begin{proof}
 Assume that $ p < \infty$. If $a \in \besov p r \mA$, $k > \floor r$ and $s \in \brd$, then for every $s>0$
 \begin{align*}
   \norm{\psi_s a}_{\besov p r \mA}
   = \norm{\psi_s a}_\mA + \Bigl( \int_{\brd} (\abs t ^{-r} \norm{\Delta^k_t \psi_s a}_\mA)^p \mulebd {t} \Bigr)^{1/p} 
   \leq M_\Psi \norm{a}_{\besov p r \mA} \,,
 \end{align*}
since $\delta^k_t\psi_s= \psi_s\delta^k_t$, so $\psi_s$ is bounded on $\besov p r \mA$. 
The proof for  $p=\infty$  is similar.
\end{proof}
\begin{prop}[{\cite[3.1.5,  3.4.3]{Butzer67}}] \label{prop:besovcontpart}
 If  $k > \floor{r}$ then
    \begin{align*}
    &C(\Lambda^p_r(\mA))=\Lambda^p_r(\mA), \quad  1\leq p < \infty \,, \\
    &C(\besov \infty r \mA)=  \lambda^\infty_r(\mA)=\set{a \in \mA \colon \lim_{h \to 0} h ^{-r} \omega^k_h(a) =0} \,.
  \end{align*}
\end{prop}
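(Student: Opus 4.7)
The plan is to establish the nontrivial inclusions separately. The direction $C(\besov p r \mA)\subseteq\besov p r \mA$ is trivial by definition, so the content is: $\besov p r \mA\subseteq C(\besov p r \mA)$ for $p<\infty$, and both $\lambda^\infty_r(\mA)\subseteq C(\besov \infty r \mA)$ and $C(\besov \infty r \mA)\subseteq\lambda^\infty_r(\mA)$ for $p=\infty$. The first two run in parallel; the last is the main obstacle.

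A common preliminary is that every $a$ under consideration lies in $C(\mA)$. For $p<\infty$ the integrability of $h\mapsto (h^{-r}\omega^k_h(a))^p$ against $dh/h$ combined with the monotonicity of $h\mapsto\omega^k_h(a)$ gives $h^{-r}\omega^k_h(a)\to 0$; for $a\in\lambda^\infty_r(\mA)$ this is the defining property. A Marchaud-type inequality,
\[
\omega^1_h(a)\leq Ch\Bigl(\int_h^1\omega^k_t(a)\,t^{-2}\,dt + \norm{a}_\mA\Bigr),
\]
valid in this abstract setting via the telescoping formula $\Delta^k_t a = \sum_{j=0}^k\binom{k}{j}(-1)^{k-j}\psi_{jt}(a)$, then yields $\omega^1_h(a)\to 0$, i.e., $a\in C(\mA)$. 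This makes the orbit $t\mapsto\psi_t(a)$ continuous in $\mA$ and legitimizes the Bochner integrals below.

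For the first two (easier) inclusions I would use the equivalent norm \eqref{eq:besovnormeqs} and the commutation $\Delta^k_t\Delta_s=\Delta_s\Delta^k_t$ to write
\[
\norm{\psi_s a - a}^p_{\besov p r \mA} \asymp \norm{\Delta_s a}_\mA^p + \int_{\brd}\bigl(\abs t^{-r}\norm{\Delta_s\Delta^k_t a}_\mA\bigr)^p\mulebd{t},
\]
with the obvious $\sup$-form when $p=\infty$. The first summand tends to $0$ by $a\in C(\mA)$. For the integral I split at some small $\eta>0$: on $\abs t<\eta$ the bound $\norm{\Delta_s\Delta^k_t a}_\mA\leq (M_\Psi+1)\norm{\Delta^k_t a}_\mA$ makes the contribution arbitrarily small by absolute continuity of the integral (for $p<\infty$) or by the $o(h^r)$-defining condition (for $p=\infty$); on $\abs t\geq\eta$ the estimate $\norm{\Delta_s\Delta^k_t a}_\mA = \norm{\Delta^k_t(\psi_s a - a)}_\mA\leq 2^k M_\Psi\norm{\psi_s a - a}_\mA$ combined with $\int_{\abs t\geq\eta}\abs t^{-rp-d}\,dt<\infty$ (valid since $r>0$) and $a\in C(\mA)$ closes the estimate.

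The main obstacle is the inclusion $C(\besov \infty r \mA)\subseteq\lambda^\infty_r(\mA)$, for which my plan is a regularization argument. Pick $\phi\in C_c^\infty(\brd)$ with $\int\phi=1$, set $\phi_\varepsilon(t)=\varepsilon^{-d}\phi(t/\varepsilon)$, and form the Bochner integral $a_\varepsilon = \int\phi_\varepsilon(t)\psi_t(a)\,dt$ in $\mA$, which is legitimate because $a\in C(\besov \infty r \mA)\subseteq C(\mA)$. Differentiating under the integral yields $\delta^\alpha a_\varepsilon = (-1)^{\abs\alpha}\int(D^\alpha\phi_\varepsilon)(t)\psi_t(a)\,dt$, so $a_\varepsilon\in\mD(\delta^\alpha)$ for every multi-index with $\norm{\delta^\alpha a_\varepsilon}_\mA\leq M_\Psi\norm{a}_\mA\norm{D^\alpha\phi}_{L^1}\varepsilon^{-\abs\alpha}$; iterating the fundamental-theorem identity $\psi_t b - b = \int_0^1\delta_t(\psi_{ut}b)\,du$ then gives the derivative-to-modulus estimate $\omega^k_h(a_\varepsilon)\leq Ch^k\max_{\abs\alpha=k}\norm{\delta^\alpha a_\varepsilon}_\mA$, hence $h^{-r}\omega^k_h(a_\varepsilon)\leq C_\varepsilon h^{k-r}\to 0$ because $k=\floor r + 1 > r$; so $a_\varepsilon\in\lambda^\infty_r(\mA)$. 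Meanwhile the continuity hypothesis yields
\[
\norm{a_\varepsilon - a}_{\besov \infty r \mA}\leq \sup_{\abs t\leq\varepsilon}\norm{\psi_t a - a}_{\besov \infty r \mA}\cdot\norm{\phi}_{L^1}\to 0.
\]
Since $\lambda^\infty_r(\mA)$ is closed in $\besov \infty r \mA$ (a routine $\varepsilon/3$-argument using $h^{-r}\omega^k_h(a-b)\leq\norm{a-b}_{\besov \infty r \mA}$), the limit $a=\lim_\varepsilon a_\varepsilon$ lies in $\lambda^\infty_r(\mA)$. The delicate step is the justification of the Bochner integral and of differentiation under it; here it is handled by the a priori fact $a\in C(\mA)$, while the more general (merely $C_w$-continuous) setting would require the tools developed in Section~\ref{sec:bess-potent-spac}.
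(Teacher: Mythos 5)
Your argument is correct, and the paper offers no proof of its own to compare against: it simply cites Butzer--Berens \cite[3.1.5, 3.4.3]{Butzer67}, and your proof is essentially the classical argument from that source transplanted to the $d$-parameter automorphism-group setting. The three ingredients you use -- the Marchaud inequality to get $a\in C(\mA)$ a priori, the split of the integral/supremum in $t$ at a threshold $\eta$ together with $\norm{\Delta_s\Delta^k_t a}_\mA=\norm{\Delta^k_t(\Delta_s a)}_\mA\lesssim\norm{\Delta_s a}_\mA$ for the easy inclusions, and the mollification $a_\varepsilon=\phi_\varepsilon * a$ with the Bernstein-type bound $\omega^k_h(a_\varepsilon)\lesssim h^k\max_{\abs\alpha=k}\norm{\delta^\alpha a_\varepsilon}_\mA$ plus closedness of $\lambda^\infty_r(\mA)$ for the hard inclusion -- are exactly the standard ones, and each transfers to the abstract setting for the reasons you give (the relevant identities involve only the operators $\psi_t$ and the uniform bound $M_\Psi$, and the Bochner integrals are justified by $a\in C(\mA)$, which for $a\in C(\besov\infty r\mA)$ already follows from $\besov\infty r\mA\inject\mA$ without Marchaud). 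I see no gap.
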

Does the iteration of the construction of Besov spaces yield refined smoothness spaces? 
\begin{thm}[Reiteration theorem]\label{thm-reiteration}
If $1 \leq p,q \leq \infty$ and $r,s >0$ then
  \begin{equation}\label{eq:1}
    \Lambda^q_s(\Lambda^p_r(\mA))=\Lambda^q_{r+s}(\mA) \,.
  \end{equation}
\end{thm}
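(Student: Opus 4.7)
I would prove the equality $\besov{q}{s}{\besov{p}{r}{\mA}} = \besov{q}{r+s}{\mA}$ by establishing the two inclusions separately via explicit moduli-of-smoothness estimates. The only ingredients are the commutativity $\Delta^l_u \Delta^k_t = \Delta^k_t \Delta^l_u$ (immediate from the abelian nature of $\Psi$) and the elementary operator bound $\norm{\Delta^j_v}_{\mA\to\mA}\leq(1+M_\Psi)^j$. Throughout, I would fix integers $k>r+s$ and $l>r$ and work with the integral form~\eqref{eq:besovnormeqs} of the norms.

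The inclusion $\besov{q}{s}{\besov{p}{r}{\mA}} \inject \besov{q}{r+s}{\mA}$ is the easy direction. It rests on the identity $\Delta^l_t \Delta^k_t = \Delta^{k+l}_t$, which implies $\omega^l_u(\Delta^k_t a,\mA) \geq \norm{\Delta^{k+l}_t a}_\mA$ whenever $u \geq |t|$. Inserting this bound into the inner Besov integral for $\norm{\Delta^k_t a}_{\besov{p}{r}{\mA}}$ and integrating $u$ over $[|t|,\infty)$ yields the pointwise inequality
\[
\norm{\Delta^k_t a}_{\besov{p}{r}{\mA}} \,\gs\, |t|^{-r}\norm{\Delta^{k+l}_t a}_\mA,
\]
and using $|t|^{-r}\geq h^{-r}$ for $|t|\leq h$ then gives $\omega^k_h(a,\besov{p}{r}{\mA}) \gs h^{-r}\omega^{k+l}_h(a,\mA)$. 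Raising to the $q$-th power and integrating against $dh/h$ delivers $|a|_{\besov{q}{r+s}{\mA}}\ls |a|_{\besov{q}{s}{\besov{p}{r}{\mA}}}$.

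For the reverse inclusion $\besov{q}{r+s}{\mA}\inject\besov{q}{s}{\besov{p}{r}{\mA}}$, the $\besov{p}{r}{\mA}$-part of the target norm is handled by the standard embedding $\besov{q}{r+s}{\mA}\inject\besov{p}{r}{\mA}$ (valid since $r+s>r$), so only the seminorm must be bounded. The two companion bounds $\omega^l_u(\Delta^k_t a,\mA)\leq C\,\omega^l_u(a,\mA)$ and $\omega^l_u(\Delta^k_t a,\mA)\leq C\,\omega^k_{|t|}(a,\mA)$ both follow from commutativity and the operator bound; splitting the inner Besov integral for $\norm{\Delta^k_t a}_{\besov{p}{r}{\mA}}^p$ at $u=|t|$ then produces
\[
\norm{\Delta^k_t a}_{\besov{p}{r}{\mA}} \,\ls\, \Bigl(\int_0^{|t|}\bigl(u^{-r}\omega^l_u(a)\bigr)^p\tfrac{du}{u}\Bigr)^{1/p} + |t|^{-r}\omega^k_{|t|}(a).
\]
Taking $\sup_{|t|\leq h}$, feeding into the outer Besov integral, and invoking a Hardy-type inequality---legitimate because $\omega^k_h(a,\mA)$ is non-decreasing, so the non-decreasing envelope $\sup_{\rho\leq h}\rho^{-r}\omega^k_\rho(a)$ is dyadically controlled by $h^{-r}\omega^k_h(a)$---reduces everything to $|a|_{\besov{q}{r+s}{\mA}}$. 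The choice $k>r+s$ ensures that the $k$- and $(k+l)$-difference versions of the Besov $r+s$ norm are equivalent, so this Besov norm really does appear on the right-hand side.

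The main obstacle is this Hardy step in the reverse direction: one must juggle two exponents---$r$ from the inner norm and $s$ from the outer---while ensuring the supremum over $t$ does not destroy integrability. The abelian commutation and the choice $k>r+s$ are precisely what makes the bookkeeping close. The endpoint cases $p=\infty$ or $q=\infty$ are handled in parallel with suprema replacing integrals throughout.
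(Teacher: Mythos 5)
Your overall strategy---both inclusions via explicit moduli-of-smoothness estimates, with the inner Besov integral split at $u=\abs t$, the two companion bounds $\omega^l_u(\Delta^k_t a)\ls\omega^l_u(a)$ and $\omega^l_u(\Delta^k_t a)\ls\omega^k_{\abs t}(a)$, and a Hardy-type inequality closing the hard direction---is essentially the proof given in Appendix~\ref{sec:proof-reit-theor}: there too the far range is handled by $\norm{\Delta^{n+m}_v\Delta^{n+m}_u x}_\mA\ls\norm{\Delta^{n+m}_u x}_\mA$ and the near range by Hardy's inequality together with Lemma~\ref{lem-modsmooth}. Your easy direction $\besov q s {\besov p r \mA}\inject\besov q {r+s} \mA$ is correct as written.

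There is, however, one concrete slip that breaks the reverse inclusion as you have set it up: you fix the inner difference order only subject to $l>r$. After the Hardy step, your first term becomes (up to constants) $\bigl(\int_0^\infty\bigl(u^{-(r+s)}\omega^l_u(a)\bigr)^q\,\muleb{u}\bigr)^{1/q}$, i.e.\ the $\besov q {r+s} \mA$ seminorm measured with differences of order $l$. This is controlled by the genuine $\besov q {r+s} \mA$ norm only when $l>r+s$; if $l\leq r+s$ the quantity is generically infinite on $\besov q {r+s} \mA$ (already in the scalar model $\omega^l_u(a)\asymp u^l$ forces $u^{-(r+s)}\omega^l_u(a)\to\infty$ as $u\to 0$), so the chain of estimates proves nothing. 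The repair is free, since any integer exceeding $\floor r$ gives an equivalent norm on $\besov p r \mA$: take $l>r+s$ from the outset. This is precisely why the paper measures the inner norm with differences of order $n+m$, $m>\floor r$, $n>\floor s$. Your closing remark about the equivalence of the $k$- and $(k+l)$-difference versions of the $\besov q {r+s} \mA$ norm addresses the easy direction but not this point. A smaller caveat: $\sup_{\rho\leq h}\rho^{-r}\omega^k_\rho(a)$ is \emph{not} pointwise dominated by $h^{-r}\omega^k_h(a)$; what you need (and what holds, by a dyadic geometric-sum argument using $s>0$) is the weighted estimate
\[
\Bigl(\int_0^\infty\Bigl(h^{-s}\sup_{\rho\leq h}\rho^{-r}\omega^k_\rho(a)\Bigr)^q\muleb{h}\Bigr)^{1/q}
\ls
\Bigl(\int_0^\infty\bigl(h^{-(r+s)}\omega^k_h(a)\bigr)^q\muleb{h}\Bigr)^{1/q},
\]
so you should state it in that form rather than as a pointwise domination of the envelope.
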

A proof of is in appendix~\ref{sec:proof-reit-theor}.
\begin{rems}
A proof of the reiteration formula for the \BA\  of bounded operators on a  \BS\ $\mX$ and the automorphism group $\psi$ obtained by conjugation with an automorphism group on \mX\  has been given in\cite{Amrein96}, using interpolation theory.

We think the reiteration formula is of some conceptual interest. 
Note that the classical notion of Besov spaces on \brd\  does not even allow to formulate the result.  
 We  use \eqref{eq:1} to simplify proofs of approximation results.
\end{rems}
%
The main result of this section treats the algebra properties of Besov spaces.
\begin{thm} \label{thm-besov-IC}
   Let \mA\ be a \BA\ with automorphism group $\Psi$. For all parameters   $1 \leq p \leq \infty$ and $r>0$, 
      the Besov space $\besov p r \mA$ is a Banach subalgebra of \mA.
   Moreover, $\besov p r \mA$ is \IC\ in \mA.
 \end{thm}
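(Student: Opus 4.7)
The plan is to reduce the statement to the range of ``small smoothness'' $0 < r < 1$ and then bootstrap up to arbitrary $r>0$ by means of the reiteration theorem (Theorem \ref{thm-reiteration}).

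\emph{First step: small smoothness $0 < r < 1$.} For such $r$ we have $\floor r = 0$, so the Besov seminorm may be computed with first-order differences. The noncommutative product rule
\[
\Delta_t(ab) = \psi_t(a)\,\Delta_t(b) + \Delta_t(a)\,b,
\]
together with $\sup_t \|\psi_t\|_{\mA\to\mA} = M_\Psi < \infty$, yields the pointwise estimate
\[
\|\Delta_t(ab)\|_\mA \leq M_\Psi \|a\|_\mA \|\Delta_t b\|_\mA + \|\Delta_t a\|_\mA \|b\|_\mA.
\]
Raising to the $p$th power, integrating against $|t|^{-rp}\,dt/|t|^d$, and invoking the norm equivalence \eqref{eq:besovnormeqs} gives
\[
|ab|_{\besov p r \mA} \leq M_\Psi \|a\|_\mA\,|b|_{\besov p r \mA} + |a|_{\besov p r \mA}\,\|b\|_\mA,
\]
so (after replacing $\|\cdot\|_\mA$ by the equivalent operator-style norm described in Section \ref{sec:concepts-from-theory}) $\besov p r \mA$ is a Banach subalgebra of $\mA$. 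For inverse-closedness, the noncommutative quotient rule
\[
\Delta_t(a^{-1}) = -\psi_t(a^{-1})\,\Delta_t(a)\,a^{-1}
\]
holds whenever $a$ is invertible in $\mA$, and gives $\|\Delta_t(a^{-1})\|_\mA \leq M_\Psi\,\|a^{-1}\|_\mA^2\,\|\Delta_t a\|_\mA$ pointwise in $t$. The same integration then shows $a^{-1} \in \besov p r \mA$ with $|a^{-1}|_{\besov p r \mA} \leq M_\Psi\,\|a^{-1}\|_\mA^2\,|a|_{\besov p r \mA}$. The argument for $p = \infty$ is identical with ``sup'' in place of ``integrate''.

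\emph{Second step: bootstrap via reiteration.} For an arbitrary $r > 0$, choose $n \in \bn$ and positive numbers $r_1, \dots, r_n \in (0,1)$ with $r_1 + \dots + r_n = r$. Iterating Theorem \ref{thm-reiteration} gives
\[
\besov p r \mA \;=\; \besov p {r_n}\!\Bigl(\besov p {r_{n-1}}\!\bigl(\cdots \besov p {r_1}(\mA)\cdots\bigr)\Bigr).
\]
By the lemma immediately preceding this theorem, $\Psi$ acts as a bounded automorphism group on every Besov space, so the first step applies at each level. Writing $B_0 = \mA$ and $B_j = \besov p {r_j}(B_{j-1})$, one inductively gets that $B_j$ is a Banach algebra and is inverse-closed in $B_{j-1}$. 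Transitivity of inverse-closedness then delivers $\besov p r \mA = B_n$ as an inverse-closed Banach subalgebra of $\mA$.

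\emph{Main obstacle.} The conceptually delicate part — handling arbitrary smoothness $r$ and hence higher-order differences $\Delta_t^k$ with their Leibniz expansion $\sum_j \binom{k}{j}\psi_t^{k-j}(\Delta_t^j a)\Delta_t^{k-j}(b)$ whose middle terms resist a direct Besov-norm bound — is sidestepped entirely by the reiteration theorem. Consequently the real technical content lies in the $0<r<1$ case, where the only step requiring care is the passage from the pointwise Leibniz/quotient identities to bounds on the full $L^p$-type integral seminorm $|\cdot|_{\besov p r \mA}$; this is a straightforward application of the equivalent norm \eqref{eq:besovnormeqs}.
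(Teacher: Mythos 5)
Your proposal is correct and follows essentially the same route as the paper: first-order difference estimates via the noncommutative product and quotient rules for $0<r<1$, then the reiteration theorem together with transitivity of inverse-closedness to reach arbitrary $r>0$. The only cosmetic difference is that you decompose $r$ as a sum $r_1+\dots+r_n$ of parameters in $(0,1)$, whereas the paper phrases the same bootstrap as an induction on the smoothness parameter; both hinge on the identical ingredients.
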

 \begin{proof}
We treat the case $r <1$ first.
  To show that $\besov p r \mA$ is a  \BA\ we use the  identity
  \begin{equation} \label{eq:7}
    \Delta_t(ab)=\psi_t(a)\Delta_t (b)+\Delta_t(a)b \,.
   \end{equation}
Taking norms we obtain
\begin{equation*}
  \begin{split}
    \norm{\Delta_t(ab)}_\mA &\leq \norm{\psi_t(a)}_\mA \norm{\Delta_t(b)}_\mA +\norm{\Delta_t(a)}_\mA\norm{b}_\mA\\
               & \leq  M_\Psi\norm{a}_\mA \norm{\Delta_t(b)}_\mA +\norm{\Delta_t(a)}_\mA\norm{b}_\mA \,.
  \end{split}
\end{equation*}
This implies a similar relation for the  Besov-seminorms, namely,
\begin{equation*}
  \abs{ab}_{\besov p r \mA}\leq M_\Psi \norm{a}_\mA \abs{b}_{\besov p r \mA}+\norm{b}_\mA \abs{a}_{\besov p r \mA}.
\end{equation*}
So
\begin{equation*}
\norm{ab}_{\besov p r \mA}=\norm{ab}_\mA+\abs{ab}_{\besov p r \mA} \leq 
 C \norm{a}_{\besov p r \mA}\norm{b}_{\besov p r \mA} \,,
\end{equation*}
and the assertion follows.

To show that   $\besov p r \mA$ is \IC\ in \mA\ we assume that $a \in \besov p r \mA$ is invertible in \mA. It is sufficient to verify that $\abs{a^{-1}}_{\besov p r \mA}$ is finite.
By a straightforward computation we obtain
\begin{equation} \label{eq:invderiv}
  \Delta_t(a^{-1})=-\psi_t(a^{-1}) \;  \Delta_t(a) \; a^{-1} \,.
\end{equation}
This implies that $a^{-1}$ has a finite $\besov p r \mA$-norm.

In the general case we can use the reiteration theorem (Theorem~\ref{thm-reiteration}) and the transitivity  of \IC ness, and prove the statement by induction. Assume that the statement is proved for all smoothness parameters smaller than $s>0$. 
As $\besov p r \mA = \besov p {r -s} {\besov p s \mA}$ for $r>s$, the preceding argument yields
$\besov p {r - s} {\besov p s \mA}$ is \IC\ in $\besov p s \mA$ for $s < r < s+1$. As $\besov p s \mA$    is \IC\ in \mA\ by hypotheses, the theorem is proved.
\end{proof}
\subsection{ Characterization of Besov Spaces as Approximation Spaces}
\label{sec:char-besov-spac}
As in the case of function spaces, $\besov p r \mA$ can be characterized by approximation properties. This was carried out for $\besov \infty r \mA$ in~\cite{grkl10}, so our treatment is very brief.

We focus on the approximation of a \BA\ \mA\ with automorphism group $\Psi$ by smooth elements. 
\begin{defn}[Bernstein inequality]
  An element $a \in \mA$ is \emph{$\sigma$-bandlimited} for $\sigma >0$, if there is a constant $C$ such that for every multi-index $\alpha$
  \begin{equation}
    \label{eq:20}
    \norm{\delta^\alpha( a)}_\mA \leq C ( 2 \pi \sigma)^{\abs \alpha} \,.
  \end{equation}
  An element is \emph{bandlimited}, if it is $\sigma$-bandlimited for
  some $\sigma
  >0$. 
\end{defn}
If \mA\ is a \BA\ with automorphism group $\Psi$, then,
\begin{equation*}
      \label{eq:29}
      X_0=\set{0}, \quad X_\sigma=\set{a \in \mA \colon a \text{ is } \sigma \text{-\BL} }, \quad \sigma>0 
    \end{equation*}
is an approximation scheme for \mA\ \cite[Lemma 5.8]{grkl10}. From now on we  use this approximation scheme without further notice. 

In particular, if \mA\ is a\HMA, we obtain the following characterization of \BL\ elements
\begin{prop}[{\cite[5.7]{grkl10}}]
  A matrix $A$ is banded with bandwidth $N$ in the \HMA\ \mA, if and only if it is $N$-bandlimited with
  respect to the group action $\{\chi _t\}$.
\end{prop}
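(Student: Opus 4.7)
The strategy is to compute the canonical generators of $\chi$ explicitly and then to move between bandedness and $N$-bandlimitedness via a discrete Fourier inversion on a finite subgroup of the torus. From the formula $\chi_t(A)(k,l)=e^{2\pi i(k-l)\cdot t}A(k,l)$, termwise differentiation at $t=0$ gives $\delta^\alpha(A)(k,l)=(2\pi i)^{\abs\alpha}(k-l)^\alpha A(k,l)$ at the entry level, and each side diagonal satisfies $\chi_t(\hat A(m))=e^{2\pi i m\cdot t}\hat A(m)$, hence $\delta^\alpha \hat A(m)=(2\pi i m)^\alpha \hat A(m)$ whenever this operation is legitimate in $\mA$.

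For the direction banded $\Rightarrow$ $N$-bandlimited, the crux is to realize each $\hat A(m)$ with $\abs m_\infty\leq N$ as an element of $\mA$ with controlled norm. I would average the orbit over the finite grid $t_j=j/(2N+1)$, $j\in\set{-N,\dots,N}^d$; a standard orthogonality calculation for characters of $\bz/(2N+1)\bz$ yields
\[
\hat A(m)=(2N+1)^{-d}\sum_{j} e^{-2\pi i m\cdot t_j}\,\chi_{t_j}(A),
\]
so $\norm{\hat A(m)}_\mA\leq M_\chi \norm{A}_\mA$ by uniform boundedness of $\chi$. Since $A$ is then a finite sum $A=\sum_{\abs m_\infty\leq N}\hat A(m)$ of elements of $\mA$, one differentiates termwise to get $\delta^\alpha A=\sum_{\abs m_\infty\leq N}(2\pi i m)^\alpha \hat A(m)$ and $\norm{\delta^\alpha A}_\mA\leq (2N+1)^d M_\chi\norm{A}_\mA(2\pi N)^{\abs\alpha}$, with constant independent of $\alpha$.

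For the converse, the embedding $\mA\inject\bop$ combined with the bandlimit hypothesis gives, for every multi-index $\alpha$,
\[
(2\pi)^{\abs\alpha}\abs{(k-l)^\alpha}\abs{A(k,l)}=\abs{\delta^\alpha A(k,l)}\leq \norm{\delta^\alpha A}_{\bop}\leq C'(2\pi N)^{\abs\alpha}.
\]
If $\abs{k-l}_\infty>N$, some coordinate $(k-l)_j$ satisfies $\abs{(k-l)_j}>N$; specializing to $\alpha=ne_j$ produces the bound $\abs{A(k,l)}\leq C'(N/\abs{(k-l)_j})^n\to 0$ as $n\to\infty$, so $A(k,l)=0$ and $A$ has bandwidth at most $N$.

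The only delicate point is the first direction, where one must justify the Fourier inversion formula for $\hat A(m)$ inside $\mA$ rather than only at the level of matrix entries; the uniform boundedness of the group $\chi$ on $\mA$ is precisely what legitimizes the vector-valued identity.
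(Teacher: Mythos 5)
Your proof is correct and complete; note that the paper itself gives no argument for this statement but imports it from \cite[5.7]{grkl10}, so the comparison is with the natural proof in that framework rather than with anything printed here. The route suggested by the surrounding machinery is to extract the side diagonal as a Fourier coefficient of the periodic action, $\hat A(m)=\int_{\btd}\chi_{-t}(A)\,e^{2\pi i m\cdot t}\,dt$ (cf.\ equation \eqref{eq:16}), which requires $A\in C(\mA)$ or a $C_w$-structure for the integral to exist in $\mA$. Your replacement of this integral by the finite character average over $(\tfrac{1}{2N+1}\bz/\bz)^d$ is the genuinely different (and nicer) step: because a banded matrix has only $(2N+1)^d$ nonzero side diagonals, the orthogonality of characters of $\bz/(2N+1)\bz$ gives $\hat A(m)$ exactly, with no aliasing and no continuity hypothesis on the orbit $t\mapsto\chi_t(A)$, at the mild cost of the factor $(2N+1)^d$ in the Bernstein constant (which is harmless, since the constant $C$ in \eqref{eq:20} may depend on $A$ and $N$). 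The remaining steps are as they should be: termwise differentiation of the finite eigenvector decomposition gives $\delta^\alpha A=\sum_{\abs m_\infty\le N}(2\pi i m)^\alpha\hat A(m)$ for the forward direction, and for the converse the continuity of entry selection ($\mA\inject\bop$) legitimizes the entrywise identity $\delta^\alpha A(k,l)=(2\pi i(k-l))^\alpha A(k,l)$, after which choosing $\alpha=ne_j$ with $\abs{(k-l)_j}>N$ and letting $n\to\infty$ kills every entry outside the band.
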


 \begin{thm}[Jackson Bernstein Theorem]\label{prop:jacksonbernstein}
    Let $\mA$ be a \BA\  with automorphism group $\Psi$, and assume that $r >0$ and $1 \leq p \leq \infty$. If $\{X_\sigma : \sigma \geq 0\}$ is the approximation
    scheme of bandlimited elements, then
    \begin{equation}
      \label{eq:cha1}
      \besov p r \mA  = \mE ^p _r (\mA ) \, .
    \end{equation}
  \end{thm}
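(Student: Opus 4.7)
The plan is to establish the equality $\besov p r \mA = \mE^p_r(\mA)$ by the classical approximation-theoretic route of proving a \emph{Jackson} direct inequality ($\besov p r \mA \hookrightarrow \mE^p_r$) together with a \emph{Bernstein} inverse inequality that, combined with a dyadic decomposition argument, yields $\mE^p_r \hookrightarrow \besov p r \mA$. This is essentially the same blueprint as in~\cite{grkl10} for $p=\infty$, and I would simply carry through the $L^p$-version of all the estimates.

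For the Jackson step I would construct a convolution-type approximation operator directly from the group action. Choose a Schwartz function $\phi$ on $\brd$ with $\hat\phi$ equal to $1$ on $B(0,1/2)$ and supported in $B(0,1)$, and set $\phi_\sigma(t)=\sigma^d\phi(\sigma t)$. Then define
\[
V_\sigma(a) \;=\; \int_{\brd}\phi_\sigma(t)\,\psi_t(a)\,dt,
\]
interpreted as a Bochner integral (which is legitimate because the elements of $\besov p r\mA$ lie in $C(\mA)$ by Proposition~\ref{prop:besovcontpart}). The Fourier support condition on $\hat\phi$ forces $V_\sigma(a)$ to be $\sigma$-bandlimited in the sense of \eqref{eq:20}: applying $\delta^\alpha$ passes through the integral and produces factors bounded by $C(2\pi\sigma)^{\abs\alpha}\norm{a}_\mA$. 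A standard manipulation based on the vanishing of moments of the kernel $\phi-\phi_{\text{appropriate rescale}}$ (or, equivalently, the Littlewood-Paley piece argument) together with the $k$-th order difference identity yields
\[
\norm{a-V_\sigma(a)}_\mA \;\ls\; \omega_{1/\sigma}^{k}(a),\qquad k>\floor r.
\]
Hence $E_\sigma(a)\ls \omega_{1/\sigma}^{k}(a)$, and integrating (or summing dyadically) against $\sigma^{rp-1}\,d\sigma$ and using the norm equivalence~\eqref{eq:besovnormeqs} gives $\norm{a}_{\mE^p_r}\ls \norm{a}_{\besov p r \mA}$.

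For the Bernstein step, the definition of $\sigma$-bandlimitedness gives $\norm{\delta^\alpha b}_\mA\le C(2\pi\sigma)^{\abs\alpha}\norm{b}_\mA$ for all $\alpha$. Taylor-expanding $\psi_t$ around the identity with integral remainder and taking the $k$-th finite difference yields $\omega_h^{k}(b)\ls (h\sigma)^{k}\norm{b}_\mA$ whenever $h\sigma\le 1$, while the trivial bound $\omega_h^{k}(b)\ls\norm{b}_\mA$ covers the complementary range. To combine this with an approximation hypothesis, pick for each $n\ge 0$ an element $b_n\in X_{2^n}$ with $\norm{a-b_n}_\mA\le 2E_{2^n}(a)$ and write $a=b_0+\sum_{n\ge 1}(b_n-b_{n-1})$. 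Each summand $b_n-b_{n-1}$ is $2^n$-bandlimited with norm $\ls E_{2^{n-1}}(a)$. Bounding
\[
\omega_{2^{-j}}^{k}(a)\;\le\;\sum_{n\le j}(2^{n-j})^{k}\norm{b_n-b_{n-1}}_\mA\;+\;\sum_{n>j}\norm{b_n-b_{n-1}}_\mA
\]
and then invoking a standard discrete Hardy inequality on the weight $2^{rj}$ (for $0<r<k$) produces $\norm{a}_{\besov p r \mA}\ls \norm{a}_{\mE^p_r}$.

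The hard part, I expect, is not the algebra of the Jackson/Bernstein estimates but making the convolution-style approximant $V_\sigma$ actually work in the abstract Banach-algebra setting. One must (i) verify that the Bochner integral makes sense and produces a genuinely $\sigma$-bandlimited element (which requires uniform control on $\norm{\delta^\alpha V_\sigma(a)}_\mA$ for all $\alpha$, not just first-order differences), and (ii) handle the endpoint $p=\infty$, where $\besov\infty r \mA\not\subseteq C(\mA)$ in general, so the Bochner integral must be replaced either by a weak/dual integral or by first establishing the estimate on $\lambda^\infty_r(\mA)=C(\besov\infty r\mA)$ and then extending. The uniform boundedness of $\Psi$ is precisely what makes these integrals behave, and once they are in hand the rest of the argument is routine.
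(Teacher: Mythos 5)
Your proposal follows essentially the same route as the paper: a Jackson estimate via a convolution-type $\sigma$-bandlimited approximant built from the group action (the paper imports exactly this operator from \cite{grkl10}, reducing higher smoothness to $\delta^\alpha a$ and second-order moduli rather than using a single $k$-th order modulus), and a Bernstein step via the dyadic representation $a=\sum a_k$, the estimate $\norm{\Delta_t a_k}_\mA\ls 2^k\abs t\,\norm{a_k}_\mA$, and Hardy's inequality. One small correction: your worry at $p=\infty$ is unfounded, since the Marchaud inequality shows $\besov \infty r \mA\subseteq C(\mA)$ for every $r>0$ (what fails is only strong continuity of $\Psi$ in the $\besov \infty r\mA$-norm, cf.\ Proposition~\ref{prop:besovcontpart}), so the Bochner integral defining $V_\sigma(a)$ is unproblematic throughout.
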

The proof is in appendix \ref{sec:jacks-bernst-theor}. 

\subsubsection*{Littlewood-Paley Decomposition}
\label{sec:littl-paley-decomp}
The norms of Besov spaces  are not easily computable. An equivalent explicit norm  for these spaces can be obtained by means of a \LP\ decomposition.

First we need some technical preparation:
  If $\mu \in \mM(\brd)$ and $a \in C(\mA)$, the \emph{action of} $\mu$ on $a$ is defined by
   \begin{equation}\label{eq:module}
    \mu * a = \int_{\brd}\psi_{-t}(a) d\mu(t). 
  \end{equation}
  This action is a generalization of the usual convolution and satisfies similar properties: 
\[ \norm{\mu * a}_\mA \leq M_\Psi  \norm{\mu}_{M(\brd)}\, \norm{a}_\mA  \,. \]
If $f \in C_c^\infty(\brd)$  then 
\begin{equation}\label{eq:derivderiv}
\delta^\alpha(f * a)=D^\alpha f *a \in C(\mA)
\end{equation}
for every multi-index $\alpha$.
See
  {\cite{Butzer67}} for details and
  proofs.

In particular, if the group action is periodic,
the action of $\mu$ on $a$ is
\begin{equation}
  \label{eq:16}
  \mu * a = \int_{\btd} \psi_{-t}(a) \, d\mu(t)=  \sum_{k \in \bzd} \mF(\mu)(k) \hat a(k)\,, 
\end{equation}
where  $\hat a (k)=\int_{\btd} \psi_{-t}(a) \cexp[ {k \cdot t}] \,dt $ is the $k$-th Fourier coefficient of $a$ and the sum converges in  the C1-sense.
\\
%
Now assume that $\varphi \in \schwartz(\brd)$ with
  $\supp \hat \varphi \subseteq \set{\omega \in \brd \colon 2^{-1} \leq \abs{\omega}_\infty \leq 2}$,
  $\hat \varphi(\omega) >0$  for    $2^{-1} <\abs{\omega}_\infty <2$, and
  $\sum_{k \in \bz}\hat \varphi(2^{-k}\omega)=1$ for all $\omega \in \brd\setminus \set{0}$.
 Set
$\hat \varphi_{k}(\omega)=\hat \varphi(2^{-k}\omega), k\in \bn_0$, so $ \varphi_k(x)= 2^{kd} \varphi_0(2^k x)$, and   let $\hat  \varphi_{-1}=1-\sum_{k=0}^\infty \hat\varphi_k$.
 Then $\set{\hat \varphi_k}_{k \geq -1}$ is a dyadic partition of unity. 

\begin{prop} \label{prop:littl-paley-decomp-3} Let $\set{\hat\varphi_k}_{k\geq -1}$ be a dyadic partition of
  unity,  and $ 1\leq p \leq \infty$, $r>0$.  An element $a \in \mA$ is in $\besov p r \mA$,  if and only if
  \begin{equation}
    \label{eq:39}
     \biggl( \sum_{k = -1}^\infty 2^{r k p}\norm{\varphi_k * a}_\mA^p \biggr)^{1/p} < \infty \,.
  \end{equation}
   The expression \eqref{eq:39} defines an equivalent norm on $\besov p r \mA$.
Moreover the \LP\ decomposition $\sum_{k=0}^\infty\varphi_k*a $  converges to $a$ in the norm of \mA.
\end{prop}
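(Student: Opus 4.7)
The plan is to prove both the norm equivalence and the norm-convergence of the decomposition by exploiting the Jackson-Bernstein identification $\besov p r \mA = \app p r \mA$ from Theorem~\ref{prop:jacksonbernstein}. The first step is to verify that each block $\varphi_k*a$ is $c\,2^{k+1}$-bandlimited in the sense of~(\ref{eq:20}): by~(\ref{eq:derivderiv}) one has $\delta^\alpha(\varphi_k*a)=D^\alpha\varphi_k*a$, and since $\hat\varphi_k$ is supported in $\{\omega\in\brd : |\omega|_\infty\le 2^{k+1}\}$, the classical $L^1$-Bernstein inequality gives $\|D^\alpha\varphi_k\|_{L^1(\brd)}\le C(2\pi\cdot 2^{k+1})^{|\alpha|}$ with $C$ independent of $\alpha$ and $k$ (using $\|\varphi_k\|_{L^1}=\|\varphi_0\|_{L^1}$). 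Combining this with the basic bound $\|\mu*a\|_\mA\le M_\Psi\|\mu\|_{\mM(\brd)}\|a\|_\mA$ shows $\varphi_k*a\in X_{c\,2^{k+1}}$, so every partial sum $s_N=\sum_{k=-1}^N\varphi_k*a$ is bandlimited at scale $\sim 2^{N+1}$.

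For the implication $a\in\besov p r \mA\Rightarrow$ finiteness of~(\ref{eq:39}), the crux is the lemma
\[
\|\varphi_k*a\|_\mA \;\le\; C\,\omega^l_{2^{-k}}(a),\qquad k\ge 0,\; l>r.
\]
This exploits the vanishing of $\hat\varphi$ to infinite order at $0$, which permits a factorization $\hat\varphi_k(\omega)=\hat\eta_k(\omega)\sum_j c_j(e^{2\pi i t_j(k)\cdot\omega}-1)^l$ with $|t_j(k)|\ls 2^{-k}$ and $\|\eta_k\|_{L^1(\brd)}$ uniformly bounded in $k$. Transporting this identity through the action~(\ref{eq:module}) writes $\varphi_k*a$ as a finite combination of terms $\eta_k*\Delta^l_{t_j(k)}(a)$, each of norm bounded by $M_\Psi\|\eta_k\|_{L^1}\omega^l_{2^{-k}}(a)$. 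Inserting this estimate into the discrete form~(\ref{eq:besovnormeqs}) of the Besov norm and summing over $k$ yields one direction of the equivalence.

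The converse uses $s_N$ as an explicit bandlimited approximant of $a$: the tail bound $\|a-s_N\|_\mA\le\sum_{k>N}\|\varphi_k*a\|_\mA$ together with the Bernstein scale $\sim 2^{N+1}$ from Step~1 gives $E_{c\,2^{N+1}}(a)\le\sum_{k>N}\|\varphi_k*a\|_\mA$, and a standard Hardy-type summation converts $\ell^p$-summability of $(2^{rk}\|\varphi_k*a\|_\mA)$ into $a\in\app p r \mA=\besov p r \mA$. Norm-convergence $s_N\to a$ in $\mA$ is obtained from the same tail estimate, provided the limit is identified with $a$; this identification is checked first on a dense subalgebra where the decomposition terminates (e.g.\ bandlimited elements, for which only finitely many $\varphi_k*a$ are nonzero by disjointness of Fourier supports) and then extended by continuity of the map $a\mapsto s_N$.

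The main obstacle is the vanishing-moments factorization of $\hat\varphi_k$, since the abstract setting rules out pointwise Fourier reasoning on $\psi_t(a)$: the identity has to be established on the scalar symbols and only then transported to $\mA$ via~(\ref{eq:module}). A secondary concern is that $\mu*a$ is a priori defined only for $a\in C(\mA)$; by Proposition~\ref{prop:besovcontpart} this is automatic when $1\le p<\infty$, while the endpoint $p=\infty$ must be handled by restricting to $\lambda^\infty_r(\mA)$ or by a weaker interpretation of the convergence in~(\ref{eq:39}).
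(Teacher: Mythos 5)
Your argument is sound, but it takes a genuinely different route from the paper in both directions of the equivalence. For $a\in\besov p r \mA\Rightarrow\eqref{eq:39}$ the paper first trades $r$ for $r-m$ by passing to the derivatives $\delta^\alpha(a)$, $\abs\alpha=m<r\le m+1$, and then only needs a second-order estimate (its \eqref{eq:62}--\eqref{eq:63}), obtained from an auxiliary kernel $\Phi_k$ that is even with vanishing integral; you work with $a$ directly and therefore need the full vanishing-moment factorization with $l>r$. That factorization is classical and transports through the module action exactly as you describe, but as written it is slightly off for $d>1$: a single common factor $\hat\eta_k$ in front of $\sum_j c_j(e^{2\pi i t_j(k)\cdot\omega}-1)^l$ cannot be arranged without care, since each factor vanishes on hyperplanes meeting the annulus $\supp\hat\varphi_k$; one needs either a partition of the annulus with separate kernels $\eta^{(j)}_k$, or combinations such as $\bigl((e^{2\pi i t\omega_j}-1)(e^{-2\pi i t\omega_j}-1)\bigr)^{m}$, which are then controlled through Lemma~\ref{lem-modsmooth}(\ref{compmodconta}). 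This is precisely the technical point the paper's choice of $\Phi_k$ sidesteps. For the converse direction the paper estimates $\omega^m_{2^{-l}}(a)$ directly by $\sum_k\min(1,2^{m(k-l)})\norm{\varphi_k*a}_\mA$ and finishes with a discrete convolution ($\ell^1*\ell^p$) inequality, whereas you route through Theorem~\ref{prop:jacksonbernstein}: the partial sums $s_N$ are $\sim 2^{N+1}$-bandlimited, the tail of \eqref{eq:39} bounds $E_{2^{N+1}}(a)$, and Hardy's inequality gives $a\in\app p r \mA=\besov p r \mA$. There is no circularity, since the Jackson--Bernstein theorem is proved independently of the \LP\ decomposition, so this is a legitimate and arguably more conceptual alternative, at the cost of invoking the representation theorem behind approximation spaces. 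Your identification of $\lim_N s_N$ with $a$ via density of bandlimited elements requires the uniform bound $\sup_N\norm{\sum_{k\le N}\varphi_k}_{L^1}<\infty$ (which holds, since this kernel is a dilate of $\varphi_{-1}$) and works for $a\in C(\mA)$; the paper instead uses a weak-type argument against $\mA'_\Psi$, which also covers the $C_w$ interpretation needed at $p=\infty$ --- your closing caveat about that endpoint is well placed.
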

 The special case $p=\infty$ was proved in~\cite{grkl10} with a weak type argument. This approach does not work for $ p < \infty$, so we adapt a proof  in~\cite{bergh76}, see Appendix~\ref{sec:lp-decomposition-1}.

\subsubsection*{Approximation of Polynomial Order in Homogeneous Matrix Spaces}
\label{sec:hz-spaces-ma}
\begin{lem}
  If $\set{\varphi_k}_{k\geq -1}$ is a \DPU\ and the action of $\Psi$
  on \mA\ is periodic, then for $a \in C(\mA)$,
  \begin{equation}
    \label{eq:periodicbesov}
    \varphi_k*a = \sum_{\floor{ 2^{k-1}} \leq \abs l_\infty <2^{k+1}} \hat \varphi_k(l) \hat a(l)  \,.
  \end{equation}
\end{lem}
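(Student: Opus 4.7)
The plan is to compute $\varphi_k * a$ directly from the integral representation \eqref{eq:module} and then periodize the integrand using Poisson summation. Since $\Psi$ is periodic, we have $\psi_{t+m}=\psi_t$ for every $m \in \bzd$, so the map $t\mapsto \psi_{-t}(a)$ is $\bzd$-periodic. Because $a \in C(\mA)$, it is also continuous from $\brd$ into $\mA$ and uniformly bounded in norm by $M_\Psi\norm{a}_\mA$.

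With this in mind, I would start from
\[
\varphi_k * a = \int_{\brd}\psi_{-t}(a)\,\varphi_k(t)\,dt,
\]
split the integration over $\brd$ into a sum over fundamental domains $[0,1)^d + m$, use periodicity of $\psi_{-t}(a)$, and swap sum and integral (justified by $\varphi_k \in \schwartz(\brd)$) to obtain
\[
\varphi_k * a = \int_{\btd}\psi_{-t}(a)\,\Bigl(\sum_{m\in\bzd}\varphi_k(t+m)\Bigr)dt.
\]
The Poisson summation formula then rewrites the periodization as
\[
\sum_{m\in\bzd}\varphi_k(t+m) = \sum_{l\in\bzd}\hat\varphi_k(l)\,e^{2\pi i l\cdot t},
\]
with both sides converging absolutely because $\varphi_k$ and $\hat\varphi_k$ lie in $\schwartz(\brd)$. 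Interchanging sum and integral once more (again using the Schwartz decay of $\hat\varphi_k$ and the uniform bound $\sup_t\norm{\psi_{-t}(a)}_\mA\leq M_\Psi\norm{a}_\mA$) yields
\[
\varphi_k * a = \sum_{l\in\bzd}\hat\varphi_k(l)\int_{\btd}\psi_{-t}(a)\,e^{2\pi i l\cdot t}\,dt = \sum_{l\in\bzd}\hat\varphi_k(l)\,\hat a(l),
\]
where the last identity is just the definition of $\hat a(l)$ recorded in \eqref{eq:16}.

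To finish, I would invoke the support properties of the dyadic partition: $\supp\hat\varphi_k\subseteq\set{\omega\in\brd\colon 2^{k-1}\leq \abs\omega_\infty\leq 2^{k+1}}$ forces $\hat\varphi_k(l)=0$ for integer vectors $l$ outside this annulus, which restricts the summation to the range $\floor{2^{k-1}}\leq \abs l_\infty < 2^{k+1}$ stated in the lemma (a mild reindexing covers the boundary case $\abs l_\infty = 2^{k+1}$, which occurs, if at all, in the neighbouring band $k+1$).

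The only mildly delicate step is the two applications of Fubini/interchange of sum and integral; these are routine given that everything in sight is a Schwartz function, the $\mA$-valued integrand is continuous and bounded, and $\Psi$ is uniformly bounded. The algebraic identity itself is then just bookkeeping once Poisson summation is in place.
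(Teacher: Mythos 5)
Your argument is correct and is essentially the paper's own proof: periodize $\varphi_k$ over $\bzd$, rewrite the periodization via Poisson summation, fold the $\brd$-integral onto $\btd$ using the periodicity of $\Psi$, and identify the resulting coefficients with $\hat a(l)$ through \eqref{eq:16}; you merely spell out the Fubini-type interchanges that the paper leaves implicit. The only cosmetic point is the boundary case $\abs{l}_\infty=2^{k+1}$, which is handled not by reindexing but simply because $\hat\varphi$ is continuous and vanishes on the open set $\abs{\omega}_\infty>2$, hence also on $\abs{\omega}_\infty=2$.
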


\begin{proof}
  Let $\varphi_k^\Pi(t) =\sum_{l \in \bzd}\varphi_k(t +l)$
  denote the periodization of $\varphi_k$. Then
\[
\varphi_k^\Pi(t) =\sum_{\floor{ 2^{k-1}} \leq \abs l_\infty <2^{k+1}}\hat \varphi_k(l) \cexp[ l\cdot t]
\]
by Poisson's summation formula. Equation~\eqref{eq:periodicbesov} now follows, combining~\eqref{eq:16} with
\[
\varphi_k*a =\int_{\brd}\psi_{-t}(a) \varphi_k(t) \, dt=\int_{\btd}\psi_{-t}(a)\varphi^\Pi_k(t) \, dt  \qedhere
\]
\end{proof}
Equation~\eqref{eq:periodicbesov} allows us to obtain a  characterization of the \as s for \HMA s by the \LP\ decomposition of its elements.
\begin{prop}
  \label{prop:polappHMAs}
Let $\mA$ be a \HMA, $r>0$, and $\Phi=\set{\varphi_k}_{k\geq-1}$ a \DPU. Then the norm on the \as\ $\app p r \mA = \besov p r \mA$ is equivalent to
\begin{equation}
  \label{eq:lpnormonas}
  \norm{A}_{\app p r \mA} \asymp
      \biggl( \sum_{k=0}^\infty 2^{k p r} \bignorm{\sum_{\floor{2^{k-1}} \leq \abs l_\infty <{2^{k+1}}} \hat\varphi_k(l)\hat A (l)}_\mA^p \biggr)^{1/p} \,. 
\end{equation}
%
If $\mA$ is solid, then 
\begin{equation}
  \label{eq:lpnormonassolid}
  \norm{A}_{\app p r \mA} \asymp
      \biggl( \sum_{k=-1}^\infty 2^{k p r} \bignorm{\sum_{\floor{2^{k}} \leq \abs l_\infty <{2^{k+1}}} \hat A (l)}_\mA^p \biggr)^{1/p} \,. 
\end{equation}
\end{prop}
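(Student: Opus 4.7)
The plan is to chain together three results established earlier in the paper. By Jackson--Bernstein (Theorem~\ref{prop:jacksonbernstein}), $\app p r \mA = \besov p r \mA$, so it suffices to produce Besov-norm equivalents of the two shapes required. By the Littlewood--Paley decomposition (Proposition~\ref{prop:littl-paley-decomp-3}), an equivalent norm on $\besov p r \mA$ is $\bigl(\sum_{k\geq -1} 2^{krp}\norm{\varphi_k * A}_\mA^p\bigr)^{1/p}$. Finally, since a \HMA\ carries the periodic group action $\chi_t$, the preceding lemma computes $\varphi_k * A$ explicitly as $\sum_{\floor{2^{k-1}} \leq \abs{l}_\infty < 2^{k+1}} \hat\varphi_k(l)\,\hat A(l)$. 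Substituting delivers~\eqref{eq:lpnormonas}; the low-frequency mode $\varphi_{-1} * A = \hat A(0)$ is absorbed using $\norm{\hat A(0)}_\mA \leq M_\chi \norm{A}_\mA$, which follows from $\hat A(0) = \int_{\btd}\chi_t(A)\,dt$.

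For the solid refinement~\eqref{eq:lpnormonassolid} the task is to strip away the multipliers $\hat\varphi_k$. Writing $R_k = \sum_{\floor{2^k} \leq \abs{l}_\infty < 2^{k+1}} \hat A(l)$, the main tool will be the following consequence of solidity: for any bounded scalars $\set{c_l}$, the matrix $\sum_l c_l \hat A(l)$ has entries pointwise dominated by $(\sup_l\abs{c_l})\,\abs{A(i,j)}$, so $\norm{\sum_l c_l \hat A(l)}_\mA \leq (\sup_l\abs{c_l})\,\norm{\sum_l \hat A(l)}_\mA$. Since $\supp \hat\varphi_k$ intersects at most two consecutive dyadic rings, this yields $\norm{\varphi_k * A}_\mA \lesssim \norm{R_{k-1}}_\mA + \norm{R_k}_\mA$, and summing against $2^{krp}$ produces one direction of the equivalence.

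The reverse inequality is the delicate point. I would introduce an auxiliary bump $\hat\eta \in C_c^\infty(\brd)$ which equals $1$ on $\set{1 \leq \abs{\omega}_\infty \leq 2}$ and is supported in a slightly larger annulus. Then $\hat\eta_k(\omega) = \hat\eta(2^{-k}\omega)$ is identically $1$ on $\set{2^k \leq \abs{l}_\infty \leq 2^{k+1}}$, and solidity gives $\norm{R_k}_\mA \leq \norm{\eta_k * A}_\mA$. Factoring $\hat\eta_k = \hat\eta_k \cdot \sum_j \hat\varphi_j$, only a bounded index set $J_k$ near $k$ contributes (those $j$ with $\supp\hat\varphi_j \cap \supp\hat\eta_k \neq \emptyset$), and one further solidity estimate produces $\norm{\eta_k * A}_\mA \leq C \sum_{j \in J_k} \norm{\varphi_j * A}_\mA$. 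Interchanging summation in the weighted $\ell^p$ sum then closes the equivalence, since the weights $2^{krp}$ and $2^{jrp}$ are comparable for $j \in J_k$.

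The main obstacle is precisely this reverse direction: the multipliers $\hat\varphi_k$ can vanish inside the dyadic ring, so they cannot be inverted directly. The auxiliary cutoff $\hat\eta$ is the device that recovers the full mass on each ring, at the harmless cost of shifting the frequency index by a bounded amount.
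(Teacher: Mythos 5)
Your proposal is correct and follows essentially the same route as the paper: Jackson--Bernstein plus the Littlewood--Paley decomposition and the periodic-action lemma give \eqref{eq:lpnormonas}, and solidity gives both directions of \eqref{eq:lpnormonassolid}. The only (cosmetic) difference is in the reverse solid estimate: the paper skips your auxiliary bump $\hat\eta_k$ by observing that $\hat\varphi_{k-1}+\hat\varphi_k+\hat\varphi_{k+1}\equiv 1$ already on the ring, so that solidity yields $\norm{R_k}_\mA\leq\norm{\varphi_{k-1}*A}_\mA+\norm{\varphi_k*A}_\mA+\norm{\varphi_{k+1}*A}_\mA$ directly, which is exactly what your $\hat\eta_k=\hat\eta_k\sum_j\hat\varphi_j$ factorization produces after one more step.
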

\begin{rem}
  If the \MA\ \mA\ is solid, similar results can be obtained for approximation spaces of non-polynomial order~\cite{Klo09}.
\end{rem}
\begin{proof}
  The results for general \HMA s follow from the Jackson Bernstein Theorem (Theorem~\ref{prop:jacksonbernstein}) and  the  \LP\ decomposition. We still have to prove the norm equivalence~\eqref{eq:lpnormonassolid}.
Set $C_k =  \norm{\sum_{2^{k}\leq |l| <2^{k+1}} \hat A(l)}_\mA $.
 The solidity of \mA\  implies that, for $k\geq -1$,
  \[
  B_k = \norm{ \varphi _k * A}_{\mA }
\leq \norm{\sum_{2^{k-1} \leq \abs l_\infty < 2^{k+1}} \hat A(l)}_\mA =C_{k-1} + C_k
  \]
  On the other hand, since $\phi_{k-1}+\phi_k + \phi_{k+1} \equiv 1$ on $\set{\xi \colon 2^{k-1} \leq \abs \xi _2
    \leq 2^{k+1}}$, we obtain $C_k \leq B_{k-1}+B_k+B_{k+1}$. So
$\norm{A}_{\app p r \mA}^p \asymp \sum_{k=0}^\infty2^{kpr}B_k \asymp \sum_{k=-1}^\infty2^{kpr}C_k$,
and this is (\ref{eq:lpnormonassolid}).
%
\end{proof}
We  apply the preceding results to the example of  $\mC^p_r$ (see Section~\ref{sec:exma} for the definition).
We obtain
\begin{align*}
  \label{eq:appbaskjaff1}
&\app q s {\mC^p_r} = \app q {s+r} {\mC^p_0} \,, \\
&\norm{A}_{\app q r {\mC^p_s}}
\asymp \Bigl( \sum_{j=0}^\infty 2^{jq(r+s)}\bigl({\sum_{\floor{2^{\j-1}} \leq \abs k_\infty < 2^j}}\norm{ A [k]}_{\lp [\infty]}^p \bigr)^{q/p} \Bigr)^{1/q} 
\asymp \norm{A}_{\app q {r+s} {\mC^p_0}}\,.
\end{align*}
In particular,
\[
\app  p s {\mC^p_r} =\mC^p_{r+s} \,.
\]
If $p \neq q$ these norms  define new classes of \IC\ subalgebras of \bop\ with a form of \odd\ suited to approximation with banded matrices.

These results should be compared to the definition of \emph{discrete Besov spaces}~\cite{pietsch80}.

\subsection{Bessel Potential Spaces}
\label{sec:bess-potent-spac}
Bessel potentials allow us to define an analogue of \emph{polynomial} weights in a \BA\  with an automorphism group. For \HMA s the Bessel potential spaces are weighted algebras. 

We define the Bessel kernel $\mG_r$ by its Fourier transform,
\[
\mF \mG_r( \omega)= (1+\abs{2 \pi \omega}_2^2)^{-r/2} \, , \quad r >0.
\]

\subsubsection*{$C_w$ groups}
\label{sec:defin-basic-prop}
In analogy to the case of real functions we would like to define an element of the Bessel potential space $\mP_r(\mA)$  as an element of the form $a= \mG_r * y$ for some $y \in  \mA$.
 However, the action $\mG_r * y$ is defined only for $y \in C(\mA)$. Using  a weaker form of continuity for the action of the automorphism group we can extend the convolution ``$*$'' to the whole algebra for all  examples of \MA s in Section~\ref{sec:exma}.  
 \begin{defn}[\cite{Arveson82, bratrob87}] \label{defn- cwgroup}
   Let $\mA$ be a \BA\ with automorphism group $\Psi$. For $a \in \mA$, $a' \in \mA'$  define $G_{a',a}(t)=\inprod{a', \psi_t(a)}$. Assume that
   \[
   \mA'_\Psi = \set{ a' \in \mA' \colon G_{a',a} \text{ is \cont\ for all } a \in \mA }
   \]
   is a \emph{norm fundamental subspace} of $\mA'$, that is
   \[
   \norm{a}_\mA = \sup \set{ \abs{\inprod{a',a}} \colon a' \in \mA'_\Psi, \, \norm{a'}_{\mA'} \leq 1}
   \]
   for all $a \in \mA$. Assume that \mA\  is equipped with the weak topology  $\sigma(\mA,\mA'_\Psi)$ with respect to the functionals in $\mA'_\Psi$, and
\begin{equation}\label{eq:convomp}
\text{\emph{the convex hull of every }} \sigma(\mA,\mA'_\Psi) \text{\emph{-compact set has} } \sigma(\mA,\mA'_\Psi) \text{\emph{-compact closure}.}
\end{equation}
In this case we call $\Psi$ a $C_w$
   group and denote $\mA$ with the $\sigma(\mA,\mA'_\Psi)$ topology by $\mA_w$, if necessary. 
 \end{defn} 
Condition \eqref{eq:convomp} ensures the existence of the ``convolution integral'' \eqref{eq:module} as a Pettis integral, see below. If  $\sigma(\mA,\mA'_\Psi)$ is \emph{quasi-complete}, i.e., bounded Cauchy nets converge, then condition \eqref{eq:convomp} is automatically satisfied \cite{koethe69}.
\begin{ex}\label{normfund}$\phantom{x}$
  \begin{enumerate}
  \item \label{predual}If $\mA'_\Psi$ is the
    predual of \mA\ (in particular, if \mA\ is a von Neumann algebra)
    the quasi-completeness is a consequence of the Banach-Alaoglu
    theorem.

  \item \label{koethedual}If $\mA$ is a Banach function space in the
    sense of~\cite{bennett88}, and $\mA'_\Psi$ is a norm fundamental
    order ideal of the \emph{Koethe dual} $\mA^\sim$, then it is known
    that $(\mA, \sigma(\mA, \mA'_\Psi))$ is
    quasi-complete~\cite[1.5.2]{bennett88}.

  \item \label{normdual} If $\mA = C(\mA)$ then $\mA'_\Psi =
    \mA'$. It is well-known that $\psi_t$ is strongly \cont\ at $a \in
    \mA$, if and only if it is \cont\ with respect to the $\sigma(\mA,
    \mA')$-topology \cite{Butzer67,hille57}, so in this case $\mA_w=
    \mA$. In this case the condition \eqref{eq:convomp} is a consequence of the Krein-Smulian theorem.
  \end{enumerate}
\end{ex}
 \begin{rems}
   If the group action is uniformly bounded the space $\mA'_\Psi$ is a norm-closed subspace of $\mA'$. Indeed, if $a_k' \in \mA'_\Psi$ and $a_k' \to a'$ in norm, then 
\begin{align*}
\lim_{t \to 0} G_{a',a}(t)-G_{a',a}(0) 
&=\lim_{t \to 0} \inprod{a', \psi_t(a)-a}
=\lim_{t \to 0} \lim_k \inprod{a_k', \psi_t(a)-a} \\
&=\lim_k \lim_{t \to 0} \inprod{a_k', \psi_t(a)-a}
=0 \,.
\end{align*}
 \end{rems}

We do not have general conditions when the action of $\chi$ on a \HMA\ is a $C_w$-group. For the specific examples of \MA s introduced in Section~\ref{sec:exma} we can prove that $\chi$ is a $C_w$ group.
\begin{ex} 
Recall that \bop\ is the dual of the trace class operators  $\mB_1$, $\bop=(\mB_1)'$ and the finite rank  operators are dense in $\mB_1$. Adapting a  continuity argument from~\cite{Deleeuw75} we verify that $(\bop)'_\chi \supseteq \mB_1$. Indeed, for $x,y \in \ltwo(\bzd)$ and the rank one operator $(x \otimes y) z= \inprod{z,y} x$ we obtain
\begin{align*}
  G_{x \otimes y, A} (t)- G_{x \otimes y, A} (t) =& \operatorname{tr}((x \otimes y)  \chi_t(A))-\operatorname{tr}((x \otimes y)  A)= \inprod{x, (\chi_t(A)-A)y}\\ 
  =& \inprod{x, M_tAM_{-t}(y-M_ty)} 
\end{align*}
As  $\lim_{t \to 0}\norm{z-M_tz}_{\ltwo(\bzd)} = 0$  for every $z \in \ltwo (\bzd)$, it follows that $G_{x \otimes y, A}$ is \cont. So, if $A'$ is a finite  rank operator then $G_{A',A}$ is \cont. As $\mA'_\Psi$ is norm closed in $\mA'$, and the finite rank operators are dense in $\mB_1$ we obtain the continuity of $G_{A',A}$ for all  $A' \in \mB_1$. We have shown that the space  $\mA'_\Psi$ contains $\mB_1$. This implies that $\mA'_\Psi$  is  norm fundamental, and so $\chi$ is a $C_w$-group on $\bop$. 
\end{ex}
\begin{ex}
If $\mA = \mS^p_r$ we can argue as follows: Let $\ell^{\infty,p}_{m_r}(\bzd [2d])$ the  mixed norm space on $\bzd [2d]$ with
\[
\norm{ (x(k,l))_{k,l \in \bzd}}_{\ell^{\infty,p}_{m_r}}
= \sup_{k \in \bzd} (\sum_{l \in \bzd} \abs{x(k,l)}^p (1+\abs{k-l})^{rp})^{1/p}
\]
and define $(j x) (k,l)= x(l,k)$. Then we obtain  the isometric isomorphism
\[
\mS^p_r \cong \ell^{\infty,p}_{m_r}(\bzd [2d]) \cap j \bigl( \ell^{\infty,p}_{m_r}(\bzd [2d]) \bigr) \,.
\]
 From \cite[Lemma 1.12]{cwikel03} (and using standard facts about sequence spaces, e.g \cite[30.3]{koethe69} we conclude that
\[
\bigl(\ell^{\infty,p}_{m_r}(\bzd [2d]) \cap j \bigl( \ell^{\infty,p}_{m_r}(\bzd [2d]) \bigr) \bigr)^\sim
\cong
\ell^{1,p'}_{m_{-r}}(\bzd [2d]) + j \bigl( \ell^{1,p'}_{m_{-r}}(\bzd [2d]) \bigr)
\]
It is routine to verify that $G_{A',A}$ is \cont\ for $A' \in \ell^{1,p'}_{m_{-r}}(\bzd [2d]) + j \bigl( \ell^{1,p'}_{m_{-r}}(\bzd [2d]) \bigr)$ and $A \in \mS^p_r$, so
Example \ref{normfund} (\ref{koethedual}) verifies that $\mS^p_r$ is a $C_w$-group.
\end{ex}
We need the concept  of $C_w$-groups not only to extend the action of a measure defined in \eqref{eq:module} to the whole of \mA, but also to give a weak type description of this action.
\begin{prop}[{\cite[1.2]{Arveson74}}]
 If $\Psi$ is a $C_w$- group for the \BA\ \mA, then for each $\mu \in \mM(\brd)$ and each $a \in \mA$ there is an element, denoted as  $  \mu * a \in \mA$, such that 
 \[
   \inprod{a', \mu *a} = \int_{\brd} \inprod{a', \psi_{-t}(a)} \, d\mu (t)
  \]
for all $a' \in \mA'_\Psi$ . As usual we write 
\begin{equation} \label{eq:weakmodule}
   \mu *a = \int_{\brd} \psi_{-t}(a) \, d\mu (t) \,.
 \end{equation}
 We obtain the norm inequality
 \[
 \norm{\mu *a} \leq M_\Psi \norm{a}_\mA \norm{\mu}_{\mM(\brd)} \,.
 \]
\end{prop}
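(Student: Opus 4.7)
The plan is to construct $\mu * a$ as a Pettis-type integral, using the $\sigma(\mA,\mA'_\Psi)$-compactness hypothesis on convex hulls. First I would introduce the linear functional $L_{\mu,a} \colon \mA'_\Psi \to \bc$ defined by
\[
L_{\mu,a}(a') = \int_{\brd} \inprod{a', \psi_{-t}(a)} \, d\mu(t) \,.
\]
The integrand is continuous in $t$ by the defining property of $C_w$-groups and uniformly bounded by $M_\Psi \norm{a'}_{\mA'}\norm{a}_\mA$, so the integral exists and $\abs{L_{\mu,a}(a')} \leq M_\Psi \norm{\mu}_{\mM(\brd)} \norm{a}_\mA \norm{a'}_{\mA'}$.

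Next I would exhibit an element $b \in \mA$ with $\inprod{a',b}=L_{\mu,a}(a')$ for every $a' \in \mA'_\Psi$; this is the main point. I would first reduce to the case of compactly supported $\mu$ via truncation (using the norm estimate to pass to a limit at the end). For compactly supported $\mu$ on a compact $K \subset \brd$, the set $S = \set{\psi_{-t}(a) \colon t \in K}$ is the image of $K$ under the $\sigma(\mA,\mA'_\Psi)$-continuous map $t \mapsto \psi_{-t}(a)$, hence $\sigma(\mA,\mA'_\Psi)$-compact. By hypothesis~\eqref{eq:convomp}, the closure $B$ of the balanced convex hull of $\norm{\mu}_{\mM(\brd)} \cdot S$ in $\sigma(\mA,\mA'_\Psi)$ is compact in this topology. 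For every finite Borel partition $\mP=(A_k)$ of $K$ with sample points $t_k \in A_k$, the Riemann sum $R_\mP = \sum_k \mu(A_k)\,\psi_{-t_k}(a)$ lies in $B$. Directing the partitions by refinement and using the continuity of $t \mapsto \inprod{a',\psi_{-t}(a)}$ together with dominated convergence for the scalar integrals, one obtains $\inprod{a', R_\mP} \to L_{\mu,a}(a')$ for every $a' \in \mA'_\Psi$. Thus the net $(R_\mP)$ has at least one $\sigma(\mA,\mA'_\Psi)$-cluster point $b \in B$, and any such cluster point satisfies $\inprod{a',b}=L_{\mu,a}(a')$; uniqueness of $b$ follows because $\mA'_\Psi$ is norm fundamental and thus separates points of $\mA$. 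Define $\mu * a := b$.

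Finally, the norm inequality is immediate: since $\mA'_\Psi$ is norm fundamental,
\[
\norm{\mu * a}_\mA = \sup_{\substack{a' \in \mA'_\Psi \\ \norm{a'}_{\mA'} \leq 1}} \abs{\inprod{a', \mu * a}} = \sup_{\substack{a' \in \mA'_\Psi \\ \norm{a'}_{\mA'} \leq 1}} \abs{L_{\mu,a}(a')} \leq M_\Psi \norm{a}_\mA \norm{\mu}_{\mM(\brd)} \,.
\]
For general $\mu \in \mM(\brd)$ I would use a sequence $\mu_n$ of compactly supported measures with $\mu_n \to \mu$ in total variation; the norm bound just established ensures that $\mu_n * a$ is Cauchy in $\mA$, and its limit represents $L_{\mu,a}$. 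The main obstacle is the existence of the cluster point $b$ in $\mA$: weak compactness of the convex hull, which is precisely what condition~\eqref{eq:convomp} supplies, is needed because without it the Riemann sums would only converge in some larger weak completion of $\mA$.
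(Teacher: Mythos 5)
Your construction is correct and is essentially the argument the paper has in mind: the paper gives no proof of its own, citing Arveson, and remarks that condition~\eqref{eq:convomp} is precisely what makes the convolution exist as a Pettis (weak) integral, which is exactly what your Riemann-sum/cluster-point argument implements. The only points to tidy are that the partition net should be directed by mesh (or by refinement into sets of small diameter) so that uniform continuity of $t \mapsto \inprod{a',\psi_{-t}(a)}$ on $K$ actually forces the scalar Riemann sums to converge, and that compactness of the \emph{balanced} convex hull follows from~\eqref{eq:convomp} applied to the still $\sigma(\mA,\mA'_\Psi)$-compact set $\set{\lambda \psi_{-t}(a) \colon \abs{\lambda}\leq 1,\ t \in K}$.
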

\begin{rems}
  Clearly in special cases the existence of the integral \eqref{eq:weakmodule} can be verified directly. In particular, if $\mA= C(\mA)$ the integral exists in the sense of Bochner.
\end{rems}
The following result is straightforward.
\begin{prop}
  If $\Psi$ is a $C_w$- group for the \BA\ \mA, and $G_{a',a}(t)= \inprod{a', \psi_t(a)}$ for $a' \in \mA'_\Psi$, $a \in \mA$, then
\[ 
\norm{a}_\mA \asymp \sup \set{\norm{G_{a',a}}_\infty\colon a' \in \mA'_\Psi , \norm{a'}_{\mA'} \leq 1}  \,.
\]
Moreover, 
\begin{equation}
  \label{eq:convcalc}
  G_{a', \mu * a}= \mu * G_{a',a} \,.
\end{equation}
\end{prop}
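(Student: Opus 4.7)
The statement splits into two independent pieces, and I would treat them separately.

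For the norm equivalence, the upper bound is immediate from the definition of $G_{a',a}$ and the uniform boundedness of $\Psi$: for any $a' \in \mA'_\Psi$ and $a \in \mA$ one has $|G_{a',a}(t)| = |\inprod{a',\psi_t(a)}| \leq \norm{a'}_{\mA'}\,\norm{\psi_t(a)}_\mA \leq M_\Psi \norm{a'}_{\mA'}\,\norm{a}_\mA$, so taking $\sup$ over $t$ and over $\norm{a'}_{\mA'}\le 1$ yields $\sup \norm{G_{a',a}}_\infty \leq M_\Psi \norm{a}_\mA$. The reverse direction uses nothing more than the normalization $G_{a',a}(0) = \inprod{a',a}$, which gives $\norm{G_{a',a}}_\infty \geq |\inprod{a',a}|$; taking the supremum over unit vectors $a' \in \mA'_\Psi$ and invoking the assumption that $\mA'_\Psi$ is norm fundamental recovers $\norm{a}_\mA$ on the right.

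For the identity $G_{a',\mu * a} = \mu * G_{a',a}$, the strategy is to unfold the weak definition of the convolution. By definition $G_{a',\mu*a}(s) = \inprod{a',\psi_s(\mu * a)} = \inprod{\psi_s^{*}a',\mu * a}$, so the key step is to show that $\psi_s^{*}a' \in \mA'_\Psi$ whenever $a' \in \mA'_\Psi$. This is a one-line check using the group property: $G_{\psi_s^{*}a',b}(t) = \inprod{a',\psi_{s+t}(b)} = G_{a',b}(s+t)$, which is just a translate of a continuous function and therefore continuous. Once that is established, the weak characterization of $\mu * a$ from the preceding proposition applies with the functional $\psi_s^{*}a'$, giving
\[
\inprod{\psi_s^{*}a',\mu*a} = \int_{\brd}\inprod{\psi_s^{*}a',\psi_{-t}(a)}\,d\mu(t) = \int_{\brd} G_{a',a}(s-t)\,d\mu(t) = (\mu * G_{a',a})(s),
\]
where in the last equality I interpret $\mu * G_{a',a}$ as the classical convolution of a finite measure with a bounded continuous function on $\brd$.

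I do not expect any real obstacle here; the whole argument is essentially bookkeeping. The only point requiring a moment of attention is the invariance $\psi_s^{*}(\mA'_\Psi) \subseteq \mA'_\Psi$, which is not stated elsewhere in the excerpt but follows directly from continuity of translation. Everything else is a direct application of the definitions of $\mA'_\Psi$, of the weak integral \eqref{eq:weakmodule}, and of the group law $\psi_s\psi_{-t}=\psi_{s-t}$.
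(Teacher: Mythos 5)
Your proof is correct and complete; the paper itself offers no proof of this proposition (it is labelled ``straightforward''), and your argument is precisely the bookkeeping the author has in mind: the upper bound from $M_\Psi$, the lower bound from $G_{a',a}(0)=\inprod{a',a}$ together with norm-fundamentality of $\mA'_\Psi$, and the convolution identity by testing the weak integral \eqref{eq:weakmodule} against $\psi_s^{*}a'$. You correctly isolate and verify the only point that genuinely needs checking, namely that $\psi_s^{*}(\mA'_\Psi)\subseteq\mA'_\Psi$ via $G_{\psi_s^{*}a',b}(t)=G_{a',b}(s+t)$, so nothing is missing.
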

Before defining Bessel potential spaces we list properties of the Bessel kernel that will be needed in the sequel.
\begin{lem}[{\cite[V.5]{stein70}}]\label{lem-besselkern-besov} \hspace{1cm}
 \begin{enumerate} 
\item    $\mG_r \in \besov \infty r {L^1(\brd)}$ , $\norm{\mG_r}_{L^1(\brd)}=1$,
\item    $\mG_r *\mG_s = \mG_{r + s}$ for all $r, s > 0$,
\item \label{surschwartz}   $\mG_r*\schwartz =\set{\mG_r*\varphi \colon \varphi \in \schwartz} =\schwartz$.
  \end{enumerate}
\end{lem}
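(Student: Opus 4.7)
My plan is to treat the three claims separately, with most of the effort going into the Besov regularity in (i). The semigroup property (ii) and the Schwartz invariance (iii) are essentially Fourier-analytic consequences of the definition.

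For (ii), the plan is to compute $\mF(\mG_r * \mG_s)(\omega)=\mF\mG_r(\omega)\mF\mG_s(\omega)=(1+\abs{2\pi\omega}_2^2)^{-(r+s)/2}=\mF\mG_{r+s}(\omega)$ and invert, which is legitimate because $\mG_r,\mG_s\in L^1(\brd)$ once (i) is established. For (iii), the map $\varphi\mapsto\mG_r*\varphi$ corresponds on the Fourier side to multiplication by the smooth function $m_r(\omega)=(1+\abs{2\pi\omega}_2^2)^{-r/2}$, which together with all its derivatives grows at most polynomially; hence $m_r\cdot\schwartz\subseteq\schwartz$, and the same argument applied to $1/m_r$ (also a $C^\infty$ function of polynomial growth in $\omega$ and all its derivatives) shows that the map is surjective. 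Thus $\mG_r*\schwartz=\schwartz$.

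For the $L^1$ part of (i), the plan is to use the subordination formula
\[
\mG_r(x)=\frac{1}{(4\pi)^{r/2}\Gamma(r/2)}\int_0^\infty e^{-\pi\abs{x}^2/s}\, e^{-s/(4\pi)}\, s^{(r-d)/2}\,\frac{ds}{s}\,,
\]
which exhibits $\mG_r$ as a positive superposition of Gaussians and hence $\mG_r\geq 0$. Therefore $\norm{\mG_r}_{L^1}=\int\mG_r\,dx=\mF\mG_r(0)=1$.

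The main work is showing $\mG_r\in\besov\infty r{L^1(\brd)}$. I plan to use the norm equivalence \eqref{eq:besovnormeqs} with the automorphism group of translations and an integer $k>r$. Setting $k=\floor{r}+1$, I need the bound $\norm{\Delta_t^k\mG_r}_{L^1}\leq C\abs{t}^r$ uniformly in $t$. Using the subordination formula, $\Delta_t^k$ acts inside the integral on the Gaussian $s\mapsto e^{-\pi\abs{\cdot}^2/s}$. A direct estimate (splitting the integral over $s\leq\abs{t}^2$ and $s\geq\abs{t}^2$ and using the trivial bounds $\norm{\Delta_t^k g_s}_{L^1}\leq 2^k\norm{g_s}_{L^1}=2^k$ on the small-$s$ range and $\norm{\Delta_t^k g_s}_{L^1}\leq C(\abs{t}/\sqrt{s})^k$ on the large-$s$ range, where $g_s(x)=e^{-\pi\abs x^2/s}s^{-d/2}$) gives
\[
\norm{\Delta_t^k\mG_r}_{L^1}\ls\int_0^{\abs t^2}s^{r/2}\frac{ds}{s}+\abs t^k\int_{\abs t^2}^\infty s^{(r-k)/2}\frac{ds}{s}\ls\abs t^r\,,
\]
since $k>r$. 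Combined with $\norm{\mG_r}_{L^1}=1$, this yields $\norm{\mG_r}_{\besov\infty r{L^1(\brd)}}<\infty$.

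The main obstacle is obtaining the Besov estimate cleanly; the bookkeeping in splitting the subordination integral requires some care, but each piece reduces to standard Gaussian estimates. Everything else is routine Fourier analysis, and the whole statement is essentially extracted from Stein, \emph{Singular Integrals}, Chapter V.
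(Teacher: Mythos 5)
Your argument is correct, and it is essentially the standard one: the paper gives no proof of this lemma but simply cites Stein, Chapter V, and your reconstruction (subordination formula for positivity and the $L^1$ norm, Fourier multipliers of polynomial growth for the semigroup property and the invariance of $\schwartz$, and the split of the subordination integral at $s=\abs{t}^2$ with the bounds $\norm{\Delta_t^k g_s}_{L^1}\leq 2^k$ and $\norm{\Delta_t^k g_s}_{L^1}\ls (\abs{t}/\sqrt{s})^k$ for the Besov seminorm) is exactly the route taken there. All the individual estimates check out, including the convergence of both pieces of the split integral, which uses precisely $0<r<k$.
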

\begin{defn}\label{defn:bessel}
  Let \mA\ be a \BS\ and $\Psi$ a $C_w$- group acting on
  \mA\ (this includes the case $A= C(\mA)$). The \emph{Bessel potential space} of order $r>0$ is
  \[
  \mP_r(\mA)= \mG_r * \mA = \set{a \in \mA \colon a= \mG_r * y \text{ for some } y \in \mA }
  \]
  with the norm
  \[
  \norm{\mG_r * y}_{\bessel r \mA} =\norm{y}_\mA.
  \]
\end{defn}
We have to verify that the definition of the norm  on ${\bessel r \mA}$ is consistent, that is, we show that 
the convolution with $\mG_r$ is injective on \mA. We use a weak type argument.

Let $y \in \mA$ with  $\mG_r*y =0$. This is equivalent to
\[
G_{{a',\mG_r *y}}(t)=\mG_r *G_{{a',y}}(t) = 0
\]
for all $t \in \brd$ and all $a' \in \mA'_\Psi$. Now we proceed as in ~\cite[V.3.3]{stein70}. We choose a test function $\varphi \in \schwartz$ and obtain 
\[
\int_{\brd}(\mG_r *G_{{a',y}}) (t)  \varphi(t) \, dt =\int_{\brd} G_{{a',y}}(t) (\mG_r *\varphi) (t)  \, dt  =0\,.
\]
By Lemma~\ref{lem-besselkern-besov} (\ref{surschwartz}) the convolution with $\mG_r$ is surjective on \schwartz, and so it follows that 
$G_{{a',y}}=0$ for all $a' \in \mA'_\Psi$, that is, $y=0$. \\

An immediate consequence of Definition~\ref{defn:bessel} is the embedding $\bessel r \mA \inject \mA$.
Indeed, if $a \in \bessel r \mA$, then $a= \mG_r*y$ for a $y \in \mA$, and
\begin{equation} \label{eq:besselinX}
  \norm{a}_\mA \leq \norm{\mG_r}_{L^1(\brd)} \norm{y}_\mA =\norm{\mG_r}_{L^1(\brd)} \norm{a}_{\bessel r \mA} .
\end{equation}

As $\mG_r*\mG_s=\mG_{r+s}$ for $r,s>0$  we obtain a useful reiteration property for the Bessel potential spaces.
\begin{prop} \label{reitbessel}
If \mA\ is a \BA\ and $\Psi$ a $C_w$-automorphism group on \mA, then for all $r,s >0$
  \[ \mP_r(\mP_s(\mA)) = \mP_{r+s}(\mA) \,.  \]
\end{prop}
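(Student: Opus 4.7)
The proof will rest entirely on the semigroup property $\mG_r * \mG_s = \mG_{r+s}$ from Lemma~\ref{lem-besselkern-besov}, combined with associativity of the convolution action. My plan is to establish both inclusions with matching norms, and treat the set-up issues first.

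\textbf{Step 1 (Setting up the iterated space).} Before interpreting $\mP_r(\mP_s(\mA))$ via Definition~\ref{defn:bessel}, I need to check that $\Psi$ descends to a bounded $C_w$-group on $\mP_s(\mA)$. If $a = \mG_s*y \in \mP_s(\mA)$, then since $\psi_t$ commutes with the action of measures (a routine consequence of \eqref{eq:weakmodule} and the group property), one has $\psi_t(a) = \mG_s * \psi_t(y)$, whence $\psi_t$ is bounded on $\mP_s(\mA)$ with bound $M_\Psi$. For the $C_w$-structure I would take $(\mP_s(\mA))'_\Psi$ to be the image of $\mA'_\Psi$ under the natural map $a' \mapsto a' \circ (\mG_s*\cdot)$, and use the identity $G_{a',\mG_s*y}=\mG_s*G_{a',y}$ from \eqref{eq:convcalc} to transfer continuity of $G_{a',y}$ to continuity of the associated matrix coefficients on $\mP_s(\mA)$. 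Norm-fundamentality and the convex-hull condition transfer by the isometric identification $\mP_s(\mA) \cong \mA$.

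\textbf{Step 2 (Associativity of the convolution action).} I will show that for $\mu,\nu \in \mM(\brd)$ and $a \in \mA$, $(\mu * \nu) * a = \mu * (\nu * a)$. Testing against $a' \in \mA'_\Psi$, the right hand side equals $\int G_{a',\nu*a}(-t)\,d\mu(t) = \int\!\!\int G_{a',a}(-t-s)\,d\nu(s)\,d\mu(t)$ by \eqref{eq:convcalc}, while the left hand side equals $\int G_{a',a}(-u)\,d(\mu*\nu)(u)$. These agree by Fubini applied to the bounded continuous function $G_{a',a}$, and norm-fundamentality of $\mA'_\Psi$ gives the equality in $\mA$.

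\textbf{Step 3 (Both inclusions with equal norms).} For $\mP_r(\mP_s(\mA)) \subseteq \mP_{r+s}(\mA)$, take $a = \mG_r*b$ with $b = \mG_s*y \in \mP_s(\mA)$, $y\in\mA$. Step 2 and Lemma~\ref{lem-besselkern-besov} give
\begin{equation*}
a = \mG_r*(\mG_s*y) = (\mG_r*\mG_s)*y = \mG_{r+s}*y,
\end{equation*}
hence $a \in \mP_{r+s}(\mA)$ with $\norm{a}_{\mP_{r+s}(\mA)} = \norm{y}_\mA = \norm{b}_{\mP_s(\mA)} = \norm{a}_{\mP_r(\mP_s(\mA))}$. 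Conversely, if $a = \mG_{r+s}*y$ with $y \in \mA$, then $a = \mG_r*(\mG_s*y)$ by the same computation, and setting $b = \mG_s*y$ we have $b \in \mP_s(\mA)$ with $\norm{b}_{\mP_s(\mA)} = \norm{y}_\mA$, so $a \in \mP_r(\mP_s(\mA))$ with matching norm.

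The main obstacle I anticipate is Step 1: verifying that $\mP_s(\mA)$ carries a genuine $C_w$-group structure so that the outer Bessel space $\mP_r(\mP_s(\mA))$ is even well-defined in the sense of Definition~\ref{defn:bessel}. Once this bookkeeping is in place, Steps 2 and 3 are direct applications of Fubini and the semigroup property. A cleaner alternative would be to take the identity $\mP_r(\mP_s(\mA)) = \mG_r*\mP_s(\mA)$ as the definition on the left-hand side, sidestepping any verification of the $C_w$-property on $\mP_s(\mA)$; the injectivity of $\mG_r*\cdot$ on $\mP_s(\mA)$ then follows from its injectivity on $\mA$ (established just after Definition~\ref{defn:bessel}) together with the embedding $\mP_s(\mA)\inject\mA$ from \eqref{eq:besselinX}.
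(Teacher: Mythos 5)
Your argument is correct and is essentially the paper's: the paper's entire proof of Proposition~\ref{reitbessel} consists of your Step 1, namely verifying that $\Psi$ is a $C_w$-group on the inner Bessel space by equipping $\mP_s(\mA)$ with the dual pairing $\inprod{a',\mG_s*y}:=\inprod{a',y}$ for $a'\in\mA'_\Psi$ and checking boundedness, continuity of the matrix coefficients, and norm-fundamentality, while the semigroup identity $\mG_r*(\mG_s*y)=\mG_{r+s}*y$ and the norm bookkeeping of your Steps 2--3 are treated as immediate consequences of Lemma~\ref{lem-besselkern-besov}. One slip to correct in Step 1: the functional on $\mP_s(\mA)$ induced by $a'$ must be $a'\circ(\mG_s*\cdot)^{-1}$ (sending $\mG_s*y\mapsto\inprod{a',y}$), not $a'\circ(\mG_s*\cdot)$ --- the map as you wrote it would pair $z=\mG_s*y$ with $\mG_{2s}*y$ and the family would fail to be norm-fundamental --- although your subsequent appeal to the isometric identification $y\leftrightarrow\mG_s*y$ makes clear you intend the correct pairing.
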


\begin{proof}
  We have to verify that $\Psi$ is a $C_w$-automorphism group on
  $\bessel r \mA$. For this we show that the dual pairing defined by
  \[
  \inprod{a',\mG_r*y}_{\mA'_\Psi \times \bessel r \mA}
  =\inprod{a',y}_{\mA' \times \mA}
  \]
  yields a norm-fundamental subspace of $\bessel r \mA '$.  As
  \[
  \abs{\inprod{a', \mG_r*y}_{\mA'_\Psi \times \bessel r \mA}} \leq
  \norm{a'}_{\mA'}\norm{y}_\mA=\norm{a'}_{\mA'}\norm{\mG_r*y}_{\bessel
    r\mA}
  \]
  the mapping $z \mapsto \inprod{a', z}_{\mA'_\Psi \times \bessel r
    \mA}$ is \cont\ for every $a' \in \mA'_\Psi$, so $\bessel r \mA
  '_\Psi \supset \mA'_\Psi$. Moreover, a straightforward computation
  shows that $\norm{a'}_{\bessel r \mA '}=\norm{a'}_{\mA'}$.  By
  definition $t \mapsto \inprod{a', \psi_tz}_{\mA'_\Psi \times \bessel
    r \mA}$ is \cont\ for each $a' \in \mA'_\Psi$ and each $z \in
  \bessel r \mA$.  Finally, $\mA'_\Psi$ is norm fundamental, as we
  have for $z = \mG_r*y$
  \begin{align*}
    &\sup\set{ \abs{\inprod{a',y}_{\mA'_\Psi \times \bessel r \mA}} \colon a' \in \mA'_\Psi, \norm{a'}_{\bessel r \mA '}\leq 1 }\\
    =&\sup\set{ \abs{\inprod{a',y}_{\mA' \times \mA}} \colon a' \in
      \mA'_\Psi, \norm{a'}_{\mA '}\leq 1 } \\
    =& \norm{y}_\mA =\norm{z}_{\bessel r \mA}
  \end{align*}
\end{proof}

\subsubsection{Characterization by Hypersingular Integrals}
\label{sec:char-hypers-integr}
\begin{lem} \label{lem:weak-bessel-norm}
  If $a \in \bessel r \mA$, then
$
\norm{a}_{\bessel r \mA} \asymp \sup_{\norm{a'}_{\mA'} \leq 1} \norm{G_{a',a}}_{\bessel r {L^\infty}},
$
where the dual pairing in $G_{a',a}$ is the one of $\mA'_\Psi \times \mA$.
\end{lem}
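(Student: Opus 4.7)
The plan is to reduce the statement to the preceding proposition (which furnishes $\norm{z}_\mA \asymp \sup_{\norm{a'}_{\mA'} \leq 1} \norm{G_{a',z}}_\infty$) by means of the ``scalarization'' identity $G_{a',\mu * z} = \mu * G_{a',z}$ from \eqref{eq:convcalc}. The point is that applying the Bessel structure to $a$ and then pairing with $a'$ gives the same thing as pairing first and then applying the Bessel structure to the resulting bounded function on $\brd$.

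First, I would fix $a \in \bessel r \mA$ and use Definition~\ref{defn:bessel} to write $a = \mG_r * y$ for the unique $y \in \mA$ with $\norm{a}_{\bessel r \mA}=\norm{y}_{\mA}$. Then, for each $a' \in \mA'_\Psi$ with $\norm{a'}_{\mA'} \leq 1$, I would invoke \eqref{eq:convcalc} to obtain
\begin{equation*}
G_{a',a}= G_{a',\mG_r * y}= \mG_r * G_{a',y} \, .
\end{equation*}
Since $G_{a',y} \in L^\infty(\brd)$ with $\norm{G_{a',y}}_\infty \leq M_\Psi \norm{a'}_{\mA'}\norm{y}_\mA$, this identity shows $G_{a',a} \in \bessel r {L^\infty}$ and, by the definition of the scalar Bessel potential norm (invoking injectivity of convolution with $\mG_r$ on $L^\infty$, which is classical and argued as in the paragraph following Definition~\ref{defn:bessel} via Lemma~\ref{lem-besselkern-besov}(\ref{surschwartz})), yields
\begin{equation*}
\norm{G_{a',a}}_{\bessel r {L^\infty}} = \norm{G_{a',y}}_{L^\infty}\, .
\end{equation*}

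Taking the supremum over $a' \in \mA'_\Psi$ with $\norm{a'}_{\mA'} \leq 1$ and applying the preceding proposition to $y$ gives
\begin{equation*}
\sup_{\norm{a'}_{\mA'}\leq 1} \norm{G_{a',a}}_{\bessel r {L^\infty}} \;=\; \sup_{\norm{a'}_{\mA'}\leq 1}\norm{G_{a',y}}_{L^\infty} \;\asymp\; \norm{y}_\mA \;=\; \norm{a}_{\bessel r \mA} \, ,
\end{equation*}
which is exactly the claimed equivalence.

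The only real subtlety I expect is the clean identification $\norm{\mG_r * h}_{\bessel r {L^\infty}}=\norm{h}_{L^\infty}$ used in the second step: this requires that convolution with $\mG_r$ be injective on $L^\infty$ so that the scalar Bessel norm is well-defined in the same way as the vector-valued one. This is handled exactly as in the consistency argument for Definition~\ref{defn:bessel}: if $\mG_r * h = 0$ in $L^\infty$, pairing with a test function $\varphi \in \schwartz$ and moving $\mG_r$ onto $\varphi$ via Fubini, together with the surjectivity $\mG_r * \schwartz = \schwartz$ of Lemma~\ref{lem-besselkern-besov}(\ref{surschwartz}), forces $h=0$. Once this is in place, the rest of the argument is a direct transcription and uses no machinery beyond \eqref{eq:convcalc} and the previous proposition.
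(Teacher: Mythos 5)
Your proof is correct and follows essentially the same route as the paper: write $a=\mG_r*y$, use the identity $G_{a',\mG_r*y}=\mG_r*G_{a',y}$ together with the definition of the scalar Bessel norm, and then apply the preceding proposition relating $\norm{y}_\mA$ to $\sup_{\norm{a'}_{\mA'}\le 1}\norm{G_{a',y}}_\infty$. The only difference is that you spell out the injectivity of convolution with $\mG_r$ on $L^\infty$, which the paper leaves implicit.
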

\begin{proof}
  Let $a=\mG_r*y$. Then 
  \begin{align*}
    \norm{a}_{\bessel r \mA} = &\norm{y}_\mA \asymp \sup_{\norm{a'}_{\mA'}\leq1} \norm{G_{a',y}}_\infty\\
    =&\sup_{\norm{a'}_{\mA'}\leq1} \norm{\mG_r*G_{a',y}}_{\bessel r  {L^\infty}} = \sup_{\norm{a'}_{\mA'}\leq1}
    \norm{G_{a',\mG_r*y}}_{\bessel r {L^\infty}} \,. \qedhere
  \end{align*}
\end{proof}
We state a special case of a result by Wheeden~\cite{Wheeden68} (see also~\cite{Stein61},\cite[V.6.10]{stein70}).
\begin{thm}\label{thm-hypersing-1d}
  Let $0 < r < 2$. A function $f$ is an element of $\bessel r {L^\infty(\brd)}$ if and only if $f \in L^\infty(\brd)$ and 
  \begin{equation}
    \label{eq:wheeden}
    \sup_{\epsilon > 0} \bignorm{\int_{\abs t_2 \geq \epsilon} \abs{t}_2^{-r} \Delta_t(f) \mulebdii{t}}_{L^\infty(\brd)} <\infty .
  \end{equation}
If \eqref{eq:wheeden} holds, 
\begin{equation}
  \label{eq:singintnorm}
\norm{f}_{L^\infty(\brd)}+\sup_{\epsilon > 0} \bignorm{\int_{\abs t_2 \geq \epsilon} \abs{t}_2^{-r} \Delta_t(f) \mulebdii{t}}_{L^\infty(\brd)} <\infty  
\end{equation}
defines an equivalent norm on $\bessel r {L^\infty(\brd)}$.
\end{thm}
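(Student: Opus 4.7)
The plan is to translate the characterisation into a Fourier multiplier statement. Let
\[
(T_\epsilon f)(x) = \int_{\abs t_2 \geq \epsilon} \abs t_2^{-r} \Delta_t f(x) \mulebdii{t}.
\]
For $\varphi \in \schwartz(\brd)$, Fubini combined with $\widehat{\Delta_t \varphi}(\omega) = (e^{-2\pi i \omega \cdot t}-1)\hat\varphi(\omega)$ identifies $T_\epsilon$ as a Fourier multiplier with symbol
\[
m_\epsilon(\omega) = \int_{\abs t_2 \geq \epsilon} \bigl( e^{-2\pi i \omega \cdot t} -1 \bigr) \abs t_2^{-r-d}\, dt.
\]
Using rotational invariance and the scaling $t\mapsto t/\abs\omega_2$, one writes $m_\epsilon(\omega)=c_{d,r}(\epsilon\abs\omega_2)\,\abs{2\pi\omega}_2^{r}$, where $c_{d,r}(s)$ is uniformly bounded in $s\ge 0$ and converges to a nonzero constant $c_{d,r}$ as $s\to 0^{+}$. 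The hypothesis $r<2$ is essential at this step: after symmetrisation in $t$ the imaginary part and the linear term in the expansion of $e^{-2\pi i \omega\cdot t}-1$ cancel, and the leading remaining contribution near $t=0$ is of size $\abs t_2^{2-r-d}$, which is integrable at the origin exactly when $r<2$.

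The second ingredient is the Aronszajn--Smith decomposition of the Bessel multiplier: for $0<r<2$ one can write
\[
(1+\abs{2\pi\omega}_2^2)^{r/2} - \abs{2\pi\omega}_2^{r} = \hat H(\omega)
\]
with $H\in L^1(\brd)$. Thus the Riesz-type multiplier $\abs{2\pi\omega}_2^{r}$ delivered by $T_\epsilon$ and the Bessel multiplier defining $\bessel r {L^\infty}$ differ, as operators on $L^\infty$, by convolution with an integrable kernel. With these two ingredients both directions of the theorem follow in parallel. For necessity, let $f=\mG_r*y$ with $y\in L^\infty$. Fubini gives $T_\epsilon f = K_\epsilon * y$, where $\widehat{K_\epsilon}(\omega)=m_\epsilon(\omega)(1+\abs{2\pi\omega}_2^2)^{-r/2}$; the Aronszajn--Smith decomposition together with the uniform bound on $c_{d,r}(\cdot)$ yields $\sup_\epsilon\norm{K_\epsilon}_{L^1(\brd)}<\infty$, hence $\sup_\epsilon \norm{T_\epsilon f}_\infty \leq C\norm{y}_\infty = C\norm{f}_{\bessel r {L^\infty}}$. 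For sufficiency, assume $f\in L^\infty$ with $\sup_\epsilon\norm{T_\epsilon f}_\infty<\infty$, and extract via Banach--Alaoglu a weak-$*$ cluster point $g\in L^\infty$ of some $T_{\epsilon_n}f$. Pointwise convergence $m_{\epsilon_n}\to c_{d,r}\abs{2\pi\omega}_2^{r}$ and testing against Schwartz functions identify $g=c_{d,r}\,I^{r}f$ in $\schwartz'(\brd)$, where $I^{r}$ has Fourier multiplier $\abs{2\pi\omega}_2^{r}$. Setting $y := c_{d,r}^{-1}g + H*f$ one checks on the Fourier side that $\mG_r*y=f$ and $y\in L^\infty$, so that $f\in\bessel r {L^\infty}$ with two-sided norm control.

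The main obstacle is the singular-integral analysis in the range $r\in[1,2)$, where the integrand $\abs t_2^{-r-d}\Delta_t f$ fails to be locally integrable near $t=0$, so that $T_\epsilon f$ has no naive pointwise limit and one is forced to work in the weak-$*$ topology throughout. Proving the uniform $L^1$-bound on the kernels $K_\epsilon$ (and the attendant convergence of $m_\epsilon$) without any absolute convergence at the origin is the technical heart of Wheeden's argument, and is where the Aronszajn--Smith decomposition has to be invoked with some care.
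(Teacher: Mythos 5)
This theorem is not proved in the paper at all: it is quoted as ``a special case of a result by Wheeden'' with references to Wheeden, to Stein's 1961 paper, and to \cite[V.6.10]{stein70}. So there is no in-paper argument to compare against; what you have written is a reconstruction of the classical proof from the cited literature, and its architecture (truncated hypersingular integral as a Fourier multiplier $m_\epsilon(\omega)=c_{d,r}(\epsilon\abs\omega_2)\abs{2\pi\omega}_2^{r}$, cancellation of the odd/linear term over the symmetric domain making $r<2$ the right threshold, the Aronszajn--Smith splitting of the Bessel symbol into $\abs{2\pi\omega}_2^{r}$ plus an $\mF L^1$ remainder, and a weak-$*$ compactness argument for sufficiency) is indeed the standard route.

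The one place where your write-up asserts more than it delivers is the claim that the uniform bound on $c_{d,r}(\cdot)$ together with the Aronszajn--Smith decomposition ``yields $\sup_\epsilon\norm{K_\epsilon}_{L^1(\brd)}<\infty$.'' A uniform $L^\infty$ bound on a Fourier multiplier controls the operator on $L^2$, not the $L^1$ norm of its kernel, and convolution by $K_\epsilon$ must be bounded on $L^\infty$ \emph{uniformly in} $\epsilon$ for the necessity direction to close. Establishing $\sup_\epsilon\norm{K_\epsilon}_{L^1}<\infty$ requires genuine kernel estimates: one has to work with $K_\epsilon=\int_{\abs t_2\geq\epsilon}\abs t_2^{-r-d}\Delta_t\mG_r\,dt$ directly, split at $\abs t_2=1$, and exploit both the symmetry of the domain (to replace $\Delta_t\mG_r$ by a symmetrized second difference near $t=0$, since $\norm{\Delta_t\mG_r}_{L^1}\lesssim\abs t^{\,r}$ alone gives a divergent bound $\int_{\abs t\leq 1}\abs t^{-d}\,dt$) and the smoothness and decay of $\mG_r$ away from the origin. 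You correctly flag this as ``the technical heart of Wheeden's argument,'' but then defer it entirely, so the proposal is a faithful roadmap to the cited proof rather than a self-contained one. The remaining steps (injectivity of the identification $g=c_{d,r}I^{r}f$ in $\schwartz'$, and the Fourier-side verification that $y=c_{d,r}^{-1}g+H*f$ satisfies $\mG_r*y=f$) are fine as sketched.
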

Combining Lemma~\ref{lem:weak-bessel-norm} with Theorem~\ref{thm-hypersing-1d} we obtain the first statement of the following theorem.
\begin{thm}
  \label{thm-hypersingint}
Let \mA\ be a \BA\ and $\Psi$ a $C_w$-automorphism group acting on it. For $0 < r <2 $ the norm $\norm{a}_{\bessel r \mA} $ is equivalent to
\begin{equation}\label{eq:hypsingnorm}
 \norm{a}_\mA + \sup_{\epsilon > 0}\bignorm{ \int_{\abs t_2 \geq \epsilon} \abs{t}_2^{-r} \Delta_t(a) \mulebdii{t}}_\mA.
 \end{equation}
This norm is further equivalent to
\[
\norm{a}_\mA + \sup_{\epsilon > 0}\bignorm{ \int_{\epsilon \leq \abs t _2 \leq 1} \frac{\Delta_t(a)} { \abs{t}_2^{r}}\mulebdii{t}}_\mA.
\]
\end{thm}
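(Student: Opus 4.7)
The plan is to establish the two equivalences separately, both reducing to scalar results via the $C_w$-duality.

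For the first equivalence, I would combine Lemma~\ref{lem:weak-bessel-norm} with Wheeden's Theorem~\ref{thm-hypersing-1d}. Lemma~\ref{lem:weak-bessel-norm} gives $\norm{a}_{\bessel r \mA} \asymp \sup_{\norm{a'}_{\mA'}\leq1} \norm{G_{a',a}}_{\bessel r {L^\infty}}$. Wheeden's theorem says each $\norm{G_{a',a}}_{\bessel r {L^\infty}}$ is equivalent to $\norm{G_{a',a}}_\infty+\sup_\epsilon\norm{\int_{|t|_2\geq\epsilon}|t|_2^{-r}\Delta_tG_{a',a}\mulebdii{t}}_\infty$. The link with the algebra side comes from the identity $\Delta_t G_{a',a}(s)=G_{a',a}(s+t)-G_{a',a}(s)=\inprod{a',\psi_s(\Delta_t a)}=G_{a',\Delta_ta}(s)$, together with \eqref{eq:convcalc}, which yields
\[
G_{a',\,\int_{|t|_2\geq\epsilon}|t|_2^{-r}\Delta_ta\,\mulebdii{t}} \;=\; \int_{|t|_2\geq\epsilon}|t|_2^{-r}G_{a',\Delta_ta}\,\mulebdii{t}.
\]
Taking $\sup_{s\in\brd}$ on the right and then $\sup_{\norm{a'}_{\mA'}\leq1}$ on the left, and invoking the weak-norm equivalence of the preceding proposition (the one that says $\norm{b}_\mA\asymp \sup\{\norm{G_{a',b}}_\infty:\norm{a'}_{\mA'}\leq1\}$), one recovers \eqref{eq:hypsingnorm}.

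For the second equivalence, I would split the integral at $|t|_2=1$. For $\epsilon\geq1$ the inner integral over $\epsilon\leq|t|_2\leq 1$ is empty, so it suffices to consider $0<\epsilon<1$. Using boundedness of $\Psi$ we have $\norm{\Delta_t a}_\mA\leq(M_\Psi+1)\norm{a}_\mA$, hence the tail is controlled by
\[
\bignorm{\int_{|t|_2\geq1}|t|_2^{-r}\Delta_t a\,\mulebdii{t}}_\mA \leq (M_\Psi+1)\norm{a}_\mA\int_{|t|_2\geq1}|t|_2^{-r-d}\,dt \;=\; C_{r,d}\,\norm{a}_\mA,
\]
which is finite because $r>0$. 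Therefore the full hypersingular integral differs from its truncation at $|t|_2\leq1$ by at most a constant multiple of $\norm{a}_\mA$, giving the second asserted norm equivalence.

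The main obstacle is the first step: justifying that the Pettis/$\sigma(\mA,\mA'_\Psi)$-valued integral $\int_{|t|_2\geq\epsilon}|t|_2^{-r}\Delta_t a\,\mulebdii{t}$ actually exists as an element of $\mA$ and commutes with the pairing $a'\mapsto G_{a',\cdot}$. This is where the $C_w$-framework and the convex-hull compactness condition \eqref{eq:convomp} are used: they ensure that the weakly defined integral represents a genuine element of $\mA$ and that $G_{a',\mu*a}=\mu*G_{a',a}$ extends from Dirac masses to the weighted measure $\one_{\{|t|_2\geq\epsilon\}}|t|_2^{-r}\mulebdii{t}$ (which is a finite signed measure since $r<2$ and the region excludes the origin, though only after truncation—hence the importance of the $\sup_\epsilon$ formulation and the restriction to $r<2$, which is inherited directly from Wheeden's theorem).
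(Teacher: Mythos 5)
Your proposal is correct and follows essentially the same route as the paper: the first equivalence by combining Lemma~\ref{lem:weak-bessel-norm} with Wheeden's Theorem~\ref{thm-hypersing-1d} via the identity $G_{a',\mu*a}=\mu*G_{a',a}$, and the second by splitting at $\abs{t}_2=1$ and absorbing the tail into $C\norm{a}_\mA$ using $\norm{\Delta_t a}_\mA\leq(1+M_\Psi)\norm{a}_\mA$ and the convergence of $\int_{\abs t_2\geq1}\abs t_2^{-r}\mulebdii{t}$. One trivial slip: the finiteness of that tail measure is due to $r>0$ (as you correctly state earlier), not to $r<2$, which is needed only for Wheeden's theorem.
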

 \begin{proof}
   We only show the second statement. As
\begin{align*}
  \bignorm{ \int_{\epsilon \leq \abs t _2 } \frac{\Delta_t(a)} { \abs{t}_2^{r}}\mulebdii{t}}_\mA &\leq
  \bignorm{ \int_{\epsilon \leq \abs t _2 \leq 1} \frac{\Delta_t(a)} { \abs{t}_2^{r}}\mulebdii{t}}_\mA +
  \bignorm{ \int_{\abs t _2 \geq  1} \frac{\Delta_t(a)} { \abs{t}_2^{r}}\mulebdii{t}}_\mA \\
  &\leq
  \bignorm{ \int_{\epsilon \leq \abs t _2 \leq 1} \frac{\Delta_t(a)} { \abs{t}_2^{r}}\mulebdii{t}}_\mA +
  (1+M_\Psi)\norm{a}_\mA \int_{\abs t _2 \geq  1}  \abs{t}_2^{-r}\mulebdii{t} \\
& \leq C( \norm{a}_\mA +\bignorm{ \int_{\epsilon \leq \abs t _2 \leq 1} \frac{\Delta_t(a)} { \abs{t}_2^{r}}\mulebdii{t}}_\mA ) \, ,
\end{align*}
the proof of the other inequality works in a similar way.
 \end{proof}
Next we  compare  Bessel potential spaces with Besov spaces. 
\begin{prop} \label{prop:besselvsbesov} If \mA\ is \BA\ with $C_w$-automorphism group $\Psi$, then
  \[
\besov 1 r \mA \inject \bessel r \mA \inject \besov \infty r \mA \,\quad \text{ if } r>0. 
\]
\end{prop}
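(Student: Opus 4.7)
I would prove the two embeddings separately. The right embedding $\bessel r \mA \inject \besov \infty r \mA$ should follow almost directly from the fact that the Bessel kernel itself has Besov regularity, namely $\mG_r \in \besov \infty r {L^1(\brd)}$ (Lemma~\ref{lem-besselkern-besov}). Concretely, for $a=\mG_r* y\in\bessel r\mA$ and $k>\floor r$, the generators commute with $\psi_s$, so in the weak (Pettis) sense one has $\Delta_t^k a=(\Delta_t^k\mG_r)*y$; passing to duality $G_{a',a}=\mG_r*G_{a',y}$ gives
\[
\norm{\Delta_t^k a}_\mA\le M_\Psi\,\norm{\Delta_t^k\mG_r}_{L^1(\brd)}\,\norm{y}_\mA\lesssim\abs t^r\norm{a}_{\bessel r \mA}\, ,
\]
and the $\besov \infty r$-norm characterization \eqref{eq:besovnormeqs} closes the argument for every $r>0$.

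For the left embedding, I would first handle the range $0<r<2$ using the hypersingular integral characterization from Theorem~\ref{thm-hypersingint}. Given $a\in\besov 1 r\mA$, define a candidate inverse-potential
\[
y=c_r\,\lim_{\epsilon\to 0}\int_{\abs t_2\ge\epsilon}\abs t_2^{-r}\Delta_t(a)\,\mulebdii{t}\, ,
\]
interpreted as a Pettis integral in $\mA_w$. Convergence is verified by splitting at $\abs t_2=1$: the integral over $\abs t_2\le 1$ is controlled by the $\besov 1 r$-seminorm, and the integral over $\abs t_2\ge 1$ is controlled by $(1+M_\Psi)\norm a_\mA\int_{\abs t_2\ge 1}\abs t_2^{-r}\mulebdii t<\infty$, which converges for $r>0$. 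Hence $\norm y_\mA\lesssim\norm a_{\besov 1 r \mA}$. The identity $\mG_r*y=a$ would then be established weakly: for each $a'\in\mA'_\Psi$, the scalar function $G_{a',a}\in\besov 1 r {L^\infty(\brd)}$ (using $\Delta_t G_{a',a}=G_{a',\Delta_t a}$), and by the classical scalar theory (Wheeden's theorem, Theorem~\ref{thm-hypersing-1d}) this hypersingular integral recovers the Bessel potential on $L^\infty(\brd)$; combined with \eqref{eq:convcalc} and the fact that $\mA'_\Psi$ is norm fundamental this forces $\mG_r*y=a$.

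Finally I would bootstrap to arbitrary $r>0$ by reiteration. Choose $n$ with $r/n<2$, and apply Theorem~\ref{thm-reiteration} iteratively to write
\[
\besov 1 r \mA=\besov 1 {r/n}{\besov 1 {r/n}{\cdots\besov 1 {r/n}\mA}}\, ,
\]
and similarly Proposition~\ref{reitbessel} for Bessel spaces. Note that $\besov 1 s \mA$ inherits the group action $\Psi$, and since Proposition~\ref{prop:besovcontpart} gives $C(\besov 1 s \mA)=\besov 1 s \mA$, Example~\ref{normfund}(\ref{normdual}) ensures $\Psi$ is again a $C_w$-group on $\besov 1 s \mA$, so the $0<r<2$ case can be applied at every stage.

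The main obstacle I anticipate is the verification that the hypersingular integral $y$ actually is a Bessel pre-image, i.e.\ that $\mG_r*y=a$ in $\mA$. The passage from the scalar inversion formula on $L^\infty(\brd)$ to the Banach-algebra-valued statement is precisely where the $C_w$-structure is essential; it is the analogue of the weak-type reduction used in Lemma~\ref{lem:weak-bessel-norm}, and one has to be careful that the Pettis integral defining $y$ really commutes with evaluation against elements of $\mA'_\Psi$. Once this identification is in place, the two embeddings together with the reiteration chain complete the proof.
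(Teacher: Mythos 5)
Your right-hand embedding coincides with the paper's proof: for $a=\mG_r*y$ one writes $\Delta^k_t a = (\Delta^k_t\mG_r)*y$, takes norms, and invokes $\mG_r\in\besov \infty r{L^1(\brd)}$ from Lemma~\ref{lem-besselkern-besov}. The overall architecture of the left-hand embedding (hypersingular estimate in a base range, then reiteration via Theorem~\ref{thm-reiteration} and Proposition~\ref{reitbessel}) also matches the paper. But there are two concrete problems in your base case. First, the step you yourself flag as the main obstacle would in fact fail as described: the hypersingular integral $c_r\lim_{\epsilon\to0}\int_{\abs t_2\ge\epsilon}\abs t_2^{-r}\Delta_t(a)\,\mulebdii{t}$ is (up to normalization) the Riesz fractional derivative, with Fourier multiplier a constant times $\abs{2\pi\omega}_2^{r}$, whereas the Bessel preimage of $a$ has multiplier $(1+\abs{2\pi\omega}_2^2)^{r/2}$; these differ, so $\mG_r*y\ne a$ for your candidate $y$. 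Wheeden's result (Theorem~\ref{thm-hypersing-1d}) is a \emph{membership characterization} -- $f\in\bessel r{L^\infty}$ iff $f\in L^\infty$ and the truncated integrals are uniformly bounded -- not an inversion formula. The paper never constructs a preimage: it uses only the ``if'' direction of Theorem~\ref{thm-hypersingint}, so that the single estimate
\[
\sup_{\epsilon>0}\bignorm{\int_{\abs t_2\ge\epsilon}\abs t_2^{-r}\Delta_t(a)\,\mulebdii{t}}_\mA
\le\int_{\brd}\abs t_2^{-r}\norm{\Delta_t(a)}_\mA\,\mulebdii{t}=\abs{a}_{\besov 1 r\mA}
\]
already yields $a\in\bessel r\mA$. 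Adopting that route removes your obstacle entirely.

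Second, your base range $0<r<2$ is too wide for this triangle-inequality argument. For $1\le r<2$ the seminorm of $\besov 1 r\mA$ is built from \emph{second} differences, and it does not dominate $\int_{\abs t_2\le1}\abs t_2^{-r}\norm{\Delta_t(a)}_\mA\,\mulebdii{t}$: already for a smooth $a$ with $\norm{\Delta_t a}_\mA\asymp\abs t$ this integral behaves like $\int_0^1 h^{-r}\,dh$ and diverges when $r\ge1$. The uniform boundedness of the truncated hypersingular integrals in that range relies on cancellation between $t$ and $-t$, which is destroyed once you move the norm inside the integral. The paper therefore takes $0<r<1$ as the base case and reaches all $r>0$ by induction in steps of length less than one; your reiteration chain works the same way if you choose $n$ with $r/n<1$ rather than $r/n<2$. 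With these two corrections your argument reduces to the paper's proof.
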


\begin{proof}
 For the proof of the embedding $\bessel r \mA \inject \besov \infty r \mA $ let $a \in \bessel r \mA$ with $a=\mG_r*y$, $y \in \mA$. The seminorm  $\abs{a}_{\besov \infty r \mA}$ can be estimated for $k > \floor{r}$ as
    \begin{align*}
      \abs{a}_{\besov \infty r \mA}
       &= \sup_{\abs t \neq 0} \frac{\norm{\Delta^k_t(\mG_r *y)}_\mA}{\abs t ^r}
        = \sup_{\abs t \neq 0} \norm {\frac{\Delta^k_t(\mG_r)}{\abs t ^r} *y}_\mA\\
        &\leq \sup_{\abs t \neq 0}\norm{\frac{\Delta^k_t(\mG_r)}{\abs t ^r}}_{L^1(\brd)} \norm{y}_\mA 
          = \norm{\mG_r}_{\besov \infty r {L^1}} \norm{a}_{\bessel r \mA } \,,
    \end{align*}
and this is the desired embedding.
  We still have to verify  the first inclusion. Assume first that $0 < r <1$.  By Theorem~\ref{thm-hypersingint}, for an $a \in \bessel r \mA$
  \begin{align*}
    \norm{a}_{\bessel r \mA}
    &\asymp \norm{a}_\mA + \sup_{\epsilon > 0}\bignorm{ \int_{\abs t_2 \geq \epsilon} \abs{t}_2^{-r} \Delta_t(a) \mulebdii{t}}_\mA\\
    &\leq \norm{a}_\mA +  \int_{\brd} \abs{t}_2^{-r} \norm{\Delta_t(a)}_\mA \mulebdii{t}\\
   & =\norm{a}_{\besov 1 r \mA} \,.
  \end{align*}
In the general case we proceed by induction. Assume that the statement is true for all positive values up to $s>0$, and $s < r < s+1$. Then
\[
\besov 1 r \mA = \besov 1 {r-s} {\besov 1 s \mA} 
\subseteq \bessel {r-s}{\besov 1 s \mA}
\subseteq \bessel {r-s}{\bessel s \mA}
=\bessel r \mA \,,
\]
where we have used the reiteration theorems for the Bessel and the Besov spaces (Theorem~\ref{thm-reiteration}). 
\end{proof}
%
Another application of the reiteration theorem and the representation of the norm of $\bessel r \mA$ by the hypersingular integral \eqref{eq:hypsingnorm} shows how Besov spaces and Bessel potential spaces interact.
\begin{prop} If \mA\ is a \BA\ with $C_w$-automorphism group $\Psi$, then for all  $r, s >0$ and $1 \leq p \leq\infty$
  \label{prop:besselonbesov}
  \begin{equation}
    \label{eq:besselonbesov}
    \bessel r {\besov p s \mA} =   \besov p s {\bessel r \mA} = \besov p {r+s} \mA \,.
  \end{equation}
\end{prop}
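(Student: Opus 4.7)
The proposition contains two equalities, and I will handle them separately. First I establish $\bessel r {\besov p s \mA} = \besov p s {\bessel r \mA}$ by exploiting that convolution with the Bessel kernel $\mG_r$ commutes with every $\psi_s$; then I deduce $\besov p s {\bessel r \mA} = \besov p {r+s} \mA$ from a two-sided sandwich supplied by Proposition~\ref{prop:besselvsbesov} and the Besov reiteration theorem (Theorem~\ref{thm-reiteration}).

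For the first equality I start from the weak definition \eqref{eq:weakmodule} of $\mG_r * y$. A short computation with the defining duality, together with the observation that $\psi_s^*$ maps $\mA'_\Psi$ into itself (since $G_{\psi_s^*(a'),b}(t)=G_{a',b}(s+t)$ remains continuous in $t$), yields $\psi_s(\mG_r * y) = \mG_r * \psi_s(y)$ for all $s \in \brd$, and hence $\Delta_t^k(\mG_r * y) = \mG_r * \Delta_t^k y$ for every integer $k \geq 1$ and every $t \in \brd$. Combined with the isometry $\norm{\mG_r * z}_{\bessel r \mA} = \norm{z}_\mA$ from Definition~\ref{defn:bessel}, this produces the pointwise identity $\norm{\Delta_t^k(\mG_r * y)}_{\bessel r \mA} = \norm{\Delta_t^k y}_\mA$; inserting it into the norm equivalence~\eqref{eq:besovnormeqs} shows that $y \mapsto \mG_r * y$ is an isometric bijection from $\besov p s \mA$ onto $\besov p s {\bessel r \mA}$. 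Since $\bessel r X = \mG_r * X$ by definition, this is exactly $\besov p s {\bessel r \mA} = \bessel r {\besov p s \mA}$; the right-hand side is defined once one verifies that $\Psi$ is a $C_w$-group on $\besov p s \mA$, which is automatic for $1 \leq p < \infty$ by Proposition~\ref{prop:besovcontpart} combined with Example~\ref{normfund}(\ref{normdual}), and for $p = \infty$ follows by repeating the dual-space argument from the proof of Proposition~\ref{reitbessel}.

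For the second equality I apply the Besov construction $\besov p s (\cdot)$ to the sandwich $\besov 1 r \mA \inject \bessel r \mA \inject \besov \infty r \mA$ of Proposition~\ref{prop:besselvsbesov}; monotonicity of the Besov construction in its underlying Banach space is immediate from the seminorm, giving
\[
\besov p s {\besov 1 r \mA} \inject \besov p s {\bessel r \mA} \inject \besov p s {\besov \infty r \mA}.
\]
By the reiteration theorem (Theorem~\ref{thm-reiteration}) both endpoints collapse to $\besov p {r+s} \mA$, which pinches $\besov p s {\bessel r \mA}$ to the same space and completes the chain of equalities.

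The main technical obstacle I anticipate is the intertwining identity $\psi_s \circ (\mG_r *) = (\mG_r *) \circ \psi_s$ at the level of the Pettis integral in the $C_w$-setting, together with the parallel check that $\Psi$ acts as a $C_w$-group on the outer Besov space so that $\bessel r {\besov p s \mA}$ is well-defined; both steps mirror arguments already carried out in the paper but must be performed carefully in this Besov-valued context, particularly when $p = \infty$, where strong continuity need not hold on the full Besov space.
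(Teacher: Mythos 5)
Your proposal is correct and follows essentially the same route as the paper: the identity $\Delta_t(\mG_r * y) = \mG_r * \Delta_t(y)$ together with the isometry $\norm{\mG_r * z}_{\bessel r \mA} = \norm{z}_\mA$ gives the first equality, and the sandwich from Proposition~\ref{prop:besselvsbesov} collapsed by the reiteration theorem gives the second, with the same preliminary check that $\Psi$ is a $C_w$-group on $\besov p s \mA$. The only cosmetic difference is that you work directly with $k$-th differences while the paper first reduces to $0<r,s<1$ via reiteration; your explicit verification of the intertwining at the level of the Pettis integral is a detail the paper leaves implicit.
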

\begin{proof}
Again, we need to know first that $\Psi$ is a $C_w$-automorphism group on $\besov p r \mA$. If $p <\infty$ then $C(\besov p r \mA)=\besov p r \mA$ by Proposition~\ref{prop:besovcontpart}. If $p=\infty$ the assertion follows from
\[
\norm{a}_{\besov \infty r \mA}= \sup_{\abs t \neq 0}\sup \set {\inprod{a', \frac{\Delta^k_t(a)}{\abs t ^r }} \colon a' \in \mA'_\Psi, \norm {a'}_{\mA'} \leq 1} \,.
\]
The details are similar to the proof of the analogue statement in Proposition~\ref{reitbessel} and are left to the reader.

Using the reiteration theorems for Bessel potential spaces and Besov spaces, it suffices to prove the proposition only for $0<r,s <1$.
  We show first that $\bessel r {\besov p s \mA} \inject \besov p s {\bessel r \mA}$. Assume that $a \in \bessel r {\besov p s\mA}$, so $a=\mG_r*y$ with $y \in \besov p s \mA$.  We obtain the following estimate.
   \begin{align*}
     \norm{a }_{\besov p s {\bessel r \mA}}^p 
     =& \int_{\brd} \frac{\norm{\Delta_t (a)}_{\bessel r \mA} ^p}{\abs t ^ {s p}} \mulebd{t} \\
     =& \int_{\brd} \frac{\norm{ \Delta_t (\mG_r *y)}_{\bessel r \mA} ^p}{\abs t ^ {s p}} \mulebd{t} \\
     =& \int_{\brd} \frac{\norm{\mG_r* \Delta_t ( y)}_{\bessel r \mA} ^p}{\abs t ^ {s p}} \mulebd{t} \\
     =& \int_{\brd} \frac{\norm{\Delta_t ( y)}_{ \mA} ^p}{\abs t ^ {s p}} \mulebd{t} \\
     = & \norm{y}_{\besov p s {\mA}}^p=\norm{\mG_r * y}_{\bessel r {\besov p s {\mA}}} ^p  \,.
   \end{align*}
Now let $a=\mG_r*y \in \bessel r {\besov p s \mA} $. Then
\begin{align*}
\norm{a}_{ \bessel r {\besov p s \mA}}^p = & \norm{\mG_r*y}_{ \bessel r {\besov p s \mA}}^p
=\norm{y}_ {\besov p s \mA}^p 
 = \int_{\brd} \frac{\norm{\Delta_t ( y)}_\mA ^p}{\abs t ^ {s p}} \mulebdii{t} \\
 =& \int_{\brd} \frac{\norm{\Delta_t (\mG_r * y)}_{\bessel r \mA} ^p}{\abs t ^ {s p}} \mulebdii{t} \\
 =&  \norm{\mG_r*y }_{\besov p s {\bessel r \mA}} =  \norm{a }_{\besov p s {\bessel r \mA}}\,.
\end{align*}
 
Consequently $\bessel r {\besov p s \mA} =   \besov p s {\bessel r \mA} $.
Finally, Proposition~\ref{prop:besselvsbesov} implies that
\[
\besov p s {\besov 1 r \mA} \inject \besov p s {\bessel r \mA} \inject \besov p s {\besov \infty r \mA}  \,,
\]
and the first and last space in this chain equal $\besov p {r + s} \mA$ by the reiteration theorem for Besov spaces (Theorem~\ref{thm-reiteration}).
\end{proof}

\subsubsection*{Algebra Properties}
\label{sec:algebra-properties}
 The characterization of Bessel potential spaces by a hypersingular integral  yields the \BA\ properties of  $\bessel r \mA$. 
\begin{thm}\label{prop:thm-besspot_baic}
   If \mA\ is a \BA\ with $C_w$-group $\Psi$, then
 the Bessel potential space $\bessel r  \mA$ is a Banach subalgebra of \mA\ for every $r>0$.
   Moreover,   $\bessel r \mA$ is \IC\ in \mA.
     \end{thm}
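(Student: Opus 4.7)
The plan is a two-step approach. First I would prove the statement for the range $0 < r < 1$ by direct manipulation of the hypersingular-integral norm supplied by Theorem~\ref{thm-hypersingint}; then I would extend to all $r > 0$ via the reiteration identity $\bessel r \mA = \bessel s {\bessel{r-s}\mA}$ of Proposition~\ref{reitbessel} combined with the transitivity of \IC ness, inducting on $\lfloor r \rfloor$.

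For the algebra property in the base case $0<r<1$, I would use the product rule together with $\psi_t(a) = a + \Delta_t(a)$ to write
\begin{equation*}
\Delta_t(ab) = a\,\Delta_t(b) + \Delta_t(a)\,\Delta_t(b) + \Delta_t(a)\,b.
\end{equation*}
Dividing by $|t|_2^r$ and integrating over $\epsilon \le |t|_2 \le 1$ against $\mulebdii{t}$, the first and third terms produce expressions of the form $a\cdot(\text{truncated hypersingular integral of }b)$ and $(\text{truncated hypersingular integral of }a)\cdot b$, uniformly bounded in $\epsilon$ by $\|a\|_\mA \|b\|_{\bessel r \mA}$ and $\|a\|_{\bessel r \mA}\|b\|_\mA$ respectively through Theorem~\ref{thm-hypersingint}. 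For the cross term I would invoke the embedding $\bessel r \mA \inject \besov \infty r \mA$ of Proposition~\ref{prop:besselvsbesov}, which for $0<r<1$ furnishes $\|\Delta_t(x)\|_\mA \leq C|t|_2^r \|x\|_{\bessel r \mA}$, yielding
\begin{equation*}
\bignorm{\int_{\epsilon\le|t|_2\le 1}\frac{\Delta_t(a)\Delta_t(b)}{|t|_2^r}\mulebdii{t}}_\mA \leq C\|a\|_{\bessel r \mA}\|b\|_{\bessel r \mA}\int_{|t|_2\le 1}|t|_2^r\mulebdii{t},
\end{equation*}
and the last integral converges whenever $r > 0$.

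For \IC ness in the base case, suppose $a \in \bessel r \mA$ and $a^{-1}\in\mA$. The quotient rule \eqref{eq:invderiv} and the same splitting $\psi_t(a^{-1}) = a^{-1} + \Delta_t(a^{-1})$ produce
\begin{equation*}
\Delta_t(a^{-1}) = -a^{-1}\Delta_t(a)\,a^{-1} - \Delta_t(a^{-1})\,\Delta_t(a)\,a^{-1}.
\end{equation*}
The first summand integrates to a term uniformly bounded by $\|a^{-1}\|_\mA^{2}\|a\|_{\bessel r \mA}$. The second summand is the delicate one: it requires an a priori estimate $\|\Delta_t(a^{-1})\|_\mA = O(|t|_2^r)$, which is furnished by the \IC ness of the Besov algebra $\besov \infty r \mA$ (Theorem~\ref{thm-besov-IC}) together with the embedding $\bessel r \mA \inject \besov \infty r \mA$: $a$ lies in $\besov \infty r \mA$, hence so does $a^{-1}$, yielding the required modulus-of-smoothness bound. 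The same convergent integral $\int_{|t|_2\le 1}|t|_2^r\mulebdii{t}$ controls the second summand.

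For the inductive step, fix $s\in(0,1)$ with $r-s>0$ and apply $\bessel r \mA = \bessel s {\bessel{r-s}\mA}$. By the induction hypothesis $\bessel{r-s}\mA$ is a Banach subalgebra of $\mA$ and \IC\ in $\mA$, and the argument of Proposition~\ref{reitbessel} shows that $\Psi$ acts on it as a $C_w$-group. The base case then gives that $\bessel r \mA$ is a Banach subalgebra of $\bessel{r-s}\mA$ and \IC\ in it, so transitivity of \IC ness concludes the argument. The main obstacle I foresee is precisely the cross term and its analogue in the inverse case; the Besov embedding is what delivers the extra factor $|t|_2^r$ needed to defeat the logarithmic singularity of $\mulebdii{t}$ at the origin, and the mere sandwich $\besov 1 r \mA \inject \bessel r \mA \inject \besov \infty r \mA$ would not suffice to obtain the stronger Bessel-norm bound.
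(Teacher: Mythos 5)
Your proposal is correct and follows essentially the same route as the paper: the same three-term expansion of $\Delta_t(ab)$ with the cross term controlled via the embedding $\bessel r \mA \inject \besov \infty r \mA$, the same expansion of $\Delta_t(a^{-1})$ with the a priori bound supplied by the \IC ness of $\besov \infty r \mA$, and the same induction through the reiteration identity and transitivity. No gaps.
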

For functions in $\bessel r {L^\infty(\brd}$ this result is in Strichartz~\cite{strichartz67}. 
  \begin{proof} 
    We treat the case $r < 1 $ first. Let $a,b \in \bessel r \mA$. Using 
\[
\Delta_t(ab) = \Delta_t(a)\Delta_t(b) + a \Delta_t(b) + \Delta_t(a) b
\]
we obtain 
\begin{equation}\label{eq:besselalgestimate}
\begin{split}
\bignorm{ \int_{\epsilon \leq \abs t _2 \leq 1}  \frac{\Delta_t(ab)}{\abs{t}_2^{r}}\mulebdii{t}}_\mA &\leq 
      \bignorm{\int_{\epsilon \leq \abs t _2 \leq 1}  \frac{\Delta_t(a)\Delta_t(b)}{\abs{t}_2^{r}}\mulebdii{t}}_\mA  \\
+\bignorm{a \int_{\epsilon \leq \abs t _2 \leq 1} &\frac{ \Delta_t(b)}{\abs{t}_2^{r}}\mulebdii{t}}_\mA +\bignorm{\Bigl(\int_{\epsilon \leq \abs t _2 \leq 1} \frac{\Delta_t(a)}{\abs{t}_2^{r}}\mulebdii{t} \Bigr)b}_\mA .
\end{split}
\end{equation}
The second and third term of the expression on the right hand side of the inequality are dominated  by
\[
\norm{a}_\mA \norm{b}_{\bessel r \mA}+ \norm{a}_{\bessel r \mA}\norm{b}_\mA \lesssim \norm{a}_{\bessel r \mA}\norm{b}_{\bessel r \mA} \,.
\]

For the estimation of the first term in~\eqref{eq:besselalgestimate} we use the embedding $\bessel r \mA \inject \besov \infty r \mA$  (Proposition~\ref{prop:besselvsbesov}), so
$
\norm{\Delta_t a}_\mA \lesssim \abs t _2 ^r \norm{a}_{\bessel r \mA},
$
with a similar estimate for $b$.
Therefore
\[
 \bignorm{\int_{\epsilon \leq \abs t _2 \leq 1}  \frac{\Delta_t(a)\Delta_t(b)}{\abs{t}_2^{r}}\mulebdii{t}}_\mA \lesssim
\norm{a}_{\bessel r \mA}  \norm{b}_{\bessel r \mA}\int_{0 \leq \abs t _2 \leq 1} \abs{t}_2^r \mulebdii{t}  \leq C_r \norm{a}_{\bessel r \mA}  \norm{b}_{\bessel r \mA} \,,
\]
and $C_r$ does not  depend on $\epsilon$.
Combining the estimates  we have proved that 
\[
\norm{ab}_{\bessel r \mA} \lesssim \norm{a}_{\bessel r \mA}\norm{b}_{\bessel r \mA}.
\]

For the verification of the \IC ness of $\bessel r \mA$ in $\mA$ we use a similar argument: Expand the identity (\eqref{eq:invderiv}) to obtain
\begin{equation*}
  \Delta_t(a^{-1})=-\Delta_t(\inv{a})\Delta_t(a) \inv{a} - \inv a \Delta_t(a) \inv a.
\end{equation*}
 So
\begin{multline}\label{eq:besselic}
  \bignorm{\int_{\epsilon \leq \abs t _2 \leq 1}  \frac{\Delta_t(\inv a)}{\abs{t}_2^{r}}\mulebdii{t}}_\mA
  \leq \bignorm{\int_{\epsilon \leq \abs t _2 \leq 1}  \frac{\Delta_t(\inv a) \Delta_t(a) \inv a}{\abs{t}_2^{r}}\mulebdii{t}}_\mA \\
    +   \bignorm{\int_{\epsilon \leq \abs t _2 \leq 1}  \frac{\inv a \Delta_t( a) \inv a}{\abs{t}_2^{r}}\mulebdii{t}}_\mA   \,.
\end{multline}
As $a \in \besov \infty r \mA$, we know that
$
\norm{\Delta_t( a)}_\mA \lesssim \abs{t}_2^r  \norm{ a}_{\besov \infty r \mA}$ , and, as $\besov \infty r \mA$ is \IC\ in \mA,
$\norm{\Delta_t(\inv a)}_\mA \lesssim \abs{t}_2^r  \norm{\inv a}_{\besov \infty r \mA}$.

The first term on the right hand side of \eqref{eq:besselic} can be dominated by

\[
\int_{\epsilon \leq \abs t _2 \leq 1} \frac{\norm{\Delta_t(\inv a)}_\mA \norm{\Delta_t a}_\mA \norm{\inv a}_\mA}{\abs t _2 ^r} \mulebdii{t} \lesssim
\norm{\inv a}_{\besov \infty r \mA} \norm{a}_{\besov \infty r \mA}\norm{a}_{ \mA}
\]
The second term can be estimated as

\begin{align*}
  \bignorm{\int_{\epsilon \leq \abs t _2 \leq 1}\frac{\inv a \Delta_t(a) \inv a}{\abs t _2 ^r} \mulebdii{t}}_\mA 
 =& \bignorm{\inv a \Bigl(\int_{\epsilon \leq \abs t _2 \leq 1}\frac{ \Delta_t(a) }{\abs t _2 ^r} \mulebdii{t}\Bigr) \inv a}_\mA \\
\lesssim & \norm{\inv a}^2_\mA \norm{a}_{\bessel r \mA}.
\end{align*}
As $\besov \infty r \mA$ is \IC\ in \mA\ we obtain the \IC ness of $\bessel r \mA$ in \mA.

If $r \geq 1$ we can proceed by induction. Assume that we have already proved that $\bessel s \mA$ is \IC\ in \mA, and $s < r < s+1$. By what we have just proved 
$\bessel r \mA = \bessel {r-s} {\bessel s \mA}$ is \IC\ in ${\bessel s \mA}$. As ${\bessel s \mA}$ is \IC\ in \mA\ by hypotheses we are done.
\end{proof}

      \subsubsection{Application to Weighted Matrix Algebras}
      \label{sec:appl-weight-matr}
We call  $ v^*_r(k) = (1+\abs{ 2 \pi k}_2^2)^{r/2}$ for $r>0$ the \emph{Bessel weight} of order $r$.  If \mA\ is a \BS\ of matrices, we say that a matrix $A$ is in the weighted matrix space $\mA_{v_r}$, where $v_r$ is the standard polynomial weight $v_r(k)=(1+\abs k)^r$,  if the matrix with entries $A(k,l)v_r(k-l)$ is in \mA. The norm in 
$\mA_{v_r}$ is $\norm{A}_{\mA_{v_r}}=\norm{(A(k,l)v_r(k-l))_{k,l \in \bzd}}_\mA $. In a similar way we introduce $\mA_{v^*_r}.$
\begin{prop}
  \label{prop:besselmatrixspace}
  If $\mA$ is a \hmg\ \MA, and $\chi$ is a $C_w$- group on \mA, then 
\[
\mA_{v^*_r}=\bessel r \mA \,.
\]
\end{prop}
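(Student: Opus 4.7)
The plan is to reduce the statement to an explicit computation of the side diagonals of $\mG_r * Y$, exploiting the fact that $\chi$ is periodic on a homogeneous matrix algebra. Since for every $k,l \in \bzd$ the rank-one operator $e_l \otimes e_k$ lies in $\mB_1 \subset \bop'$, and $\mA \hookrightarrow \bop$, the functional $A \mapsto A(k,l) = \inprod{e_l \otimes e_k, A}$ belongs to $\mA'$; moreover the function $t \mapsto G_{e_l \otimes e_k, A}(t) = e^{2\pi i (k-l) \cdot t} A(k,l)$ is continuous, so $e_l \otimes e_k \in \mA'_\Psi$. These functionals will serve to read off matrix entries.

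First, I would use the periodicity $\chi_{t+e_j} = \chi_t$ to rewrite the convolution against $\mG_r$ as an integral over the torus with the periodized kernel. Concretely, for any $Y \in \mA$, the defining weak identity \eqref{eq:weakmodule} together with Fubini (applied to the pairing against $e_l \otimes e_k$, where everything is absolutely convergent by $\mG_r \in L^1(\brd)$) gives
\begin{equation*}
(\mG_r * Y)(k,l) = \int_{\brd} \chi_{-t}(Y)(k,l)\, \mG_r(t)\, dt = Y(k,l) \int_{\brd} e^{-2\pi i (k-l)\cdot t} \mG_r(t)\, dt = Y(k,l)\, \mF\mG_r(k-l).
\end{equation*}
Equivalently, passing to the periodization $\mG_r^\Pi(t)=\sum_{l \in \bzd}\mG_r(t+l)$ and using Poisson summation as in the proof of \eqref{eq:periodicbesov}, the $m$-th side diagonal of $\mG_r * Y$ equals $\mF\mG_r(m)\,\hat Y(m) = v^*_r(m)^{-1}\hat Y(m)$.

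The identification is now immediate. By Definition~\ref{defn:bessel}, $A \in \bessel r \mA$ iff there is $Y \in \mA$ with $A = \mG_r * Y$; by the computation above this is equivalent to $A(k,l) = Y(k,l)/v^*_r(k-l)$ for all $k,l$, i.e.\ $Y(k,l) = v^*_r(k-l) A(k,l)$. The existence of $Y \in \mA$ with these entries is precisely the statement $A \in \mA_{v^*_r}$, and in that case
\begin{equation*}
\norm{A}_{\bessel r \mA} = \norm{Y}_\mA = \norm{(v^*_r(k-l)A(k,l))_{k,l}}_\mA = \norm{A}_{\mA_{v^*_r}}.
\end{equation*}

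The one delicate point is the justification of the entry-wise computation of $\mG_r * Y$ in the $C_w$-setting, since a priori the Pettis integral \eqref{eq:weakmodule} only commutes with functionals in $\mA'_\Psi$. This is precisely what the homogeneous matrix algebra hypothesis buys us: the entry-extracting functionals lie in $\mA'_\Psi$, so the equality $\inprod{e_l \otimes e_k, \mG_r * Y} = \int_\brd e^{-2\pi i(k-l)\cdot t}\mG_r(t)\,dt \cdot Y(k,l)$ holds unconditionally, and no strong continuity of $\chi$ on $\mA$ is needed. Everything else is bookkeeping.
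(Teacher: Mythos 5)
Your proof is correct and follows essentially the same route as the paper: both identify the action of $\mG_r$ as a Fourier multiplier on the side diagonals, giving $\widehat{\mG_r * Y}(m)=\mF\mG_r(m)\hat Y(m)=v^*_r(m)^{-1}\hat Y(m)$, and then read off the equality of spaces and norms. Your extra step of verifying that the entry-extracting rank-one functionals belong to $\mA'_\Psi$ (so that the Pettis integral can be evaluated entrywise) is a welcome justification of a point the paper's proof leaves implicit.
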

\begin{proof}
By definition $A$ is in $\bessel  r \mA$, if there is a $A_0 \in \mA$ such that $A= \mG_r *A_0$. This is equivalent to
\[
\hat A (k) = (1+\abs{ 2 \pi k}^2)^{-r/2} \hat A_0(k) \,,
\]
or $\hat A_0(k) = (1+\abs{ 2 \pi k}^2)^{r/2} \hat A(k)$,
and therefore
\[
\norm{A}_{\bessel r \mA}=\norm{A_0}_\mA = \norm{A}_{\mA_{v^*_r}} \,,
\]
 i.e., $A \in \mA_{v^*_r}$.
\end{proof}
\begin{prop}
  \label{prop:weightes_MAs}
If \mA\ is a \HMA, $\chi$ is a $C_w$- group on \mA, and $v^*_r$, $r>0$, is a Bessel weight, then $\mA_{v^*_r} = \bessel r \mA$ is a \MA. This algebra is \IC\ in \mA.
\end{prop}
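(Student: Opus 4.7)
The proof I envision is essentially an assembly of previously established results, since the heavy lifting has already been done. Here is the plan.

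\textbf{Step 1: Identification of the two spaces.} The equality $\mA_{v^*_r} = \bessel r \mA$ (as Banach spaces) is exactly Proposition~\ref{prop:besselmatrixspace}, which applies because $\mA$ is a \HMA\ and $\chi$ is a $C_w$-group on $\mA$. So I only need to verify the remaining assertions for $\bessel r \mA$ itself.

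\textbf{Step 2: Banach algebra structure and inverse-closedness.} Since $\mA$ is a \BA\ equipped with the $C_w$-automorphism group $\chi$, Theorem~\ref{prop:thm-besspot_baic} applies directly and yields that $\bessel r \mA$ is a Banach subalgebra of $\mA$ which is \IC\ in $\mA$. This handles the inverse-closedness claim as well as half of the matrix-algebra claim (namely, that we have a \BA\ of matrices under matrix multiplication).

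\textbf{Step 3: Continuous embedding in $\bopzd$.} To conclude that $\bessel r \mA$ is a \emph{matrix algebra} in the sense of the definition in Section~\ref{sec:exma}, I still need the continuous embedding $\bessel r \mA \inject \bopzd$. This follows by composing two embeddings. From the embedding displayed in~\eqref{eq:besselinX} we have $\bessel r \mA \inject \mA$ with $\norm{a}_\mA \leq \norm{\mG_r}_{L^1(\brd)} \norm{a}_{\bessel r \mA}$. Since $\mA$ is by assumption a \MA, we have $\mA \inject \bopzd$ by definition. Composing the two inclusions gives the required continuous embedding.

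\textbf{Main obstacle.} There is essentially no obstacle here; the statement is a corollary of the theory developed up to this point. The only thing one should be careful about is that all the structural hypotheses required by the invoked results (in particular, the $C_w$-group assumption needed for Theorem~\ref{prop:thm-besspot_baic} and for Proposition~\ref{prop:besselmatrixspace}) are precisely those assumed here, so no additional verification is needed.
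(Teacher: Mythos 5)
Your proposal is correct and follows exactly the paper's route: the paper's own proof is the one-line remark that this is an application of Theorem~\ref{prop:thm-besspot_baic}, with the identification $\mA_{v^*_r}=\bessel r \mA$ already supplied by Proposition~\ref{prop:besselmatrixspace}. Your Step 3, making explicit the continuous embedding $\bessel r \mA \inject \mA \inject \bopzd$ needed for the matrix-algebra claim, is a detail the paper leaves implicit but is entirely consistent with its argument.
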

\begin{proof}
  This is an application of Theorem~\ref{prop:thm-besspot_baic}.
\end{proof}
Proposition \ref{prop:weightes_MAs} applies in particular to the weighted subalgebras of \bop.

For solid \MA s the standard polynomial weights $v_r$ can be taken instead of $v^*_r$.
\begin{cor}
  If \mA\ is a solid \MA, and $\chi$ is a $C_w$- group on \mA,  then $\mA_{v_r}$ is an \IC\ subalgebra of \mA.
\end{cor}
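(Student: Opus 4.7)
The plan is to reduce this to Proposition~\ref{prop:weightes_MAs} by showing that for a solid \MA, the weighted algebra defined by the standard polynomial weight $v_r$ coincides (with equivalent norms) with the one defined by the Bessel weight $v^*_r$.

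First I would observe the elementary pointwise equivalence of weights on \bzd:
\[
v_r(k) = (1 + \abs{k})^r \asymp (1 + \abs{2\pi k}_2^2)^{r/2} = v^*_r(k) \,.
\]
This is just a comparison of the 1-norm (or $\infty$-norm, whichever $\abs{\cdot}$ denotes in $v_r$) on $\brd$ with the Euclidean norm, both of which are equivalent to each other on \brd, plus the obvious comparison of $1+t$ with $(1+t^2)^{1/2}$ for $t \geq 0$. So there exist constants $c_1,c_2 > 0$ with $c_1 v^*_r(k) \leq v_r(k) \leq c_2 v^*_r(k)$ for every $k \in \bzd$.

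Next I would exploit solidity. For a matrix $A$, the entries of the ``weighted matrix'' $(A(k,l) v_r(k-l))_{k,l}$ satisfy
\[
c_1 \abs{A(k,l) v^*_r(k-l)} \leq \abs{A(k,l) v_r(k-l)} \leq c_2 \abs{A(k,l) v^*_r(k-l)} \,.
\]
Because the norm on \mA\ is solid, this translates directly into the equivalence of norms
\[
c_1 \norm{A}_{\mA_{v^*_r}} \leq \norm{A}_{\mA_{v_r}} \leq c_2 \norm{A}_{\mA_{v^*_r}} \,,
\]
so $\mA_{v_r} = \mA_{v^*_r}$ as Banach spaces with equivalent norms. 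Since the paper has the convention that Banach spaces with equivalent norms are identified (stated at the end of Section~\ref{sec:notation}), the two spaces are in fact equal.

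Finally I would invoke Proposition~\ref{prop:weightes_MAs}: since \mA\ is in particular \hmg\ (solid \MA s are explicitly noted to be \hmg\ in Section~\ref{sec:smoothnessBA}) and $\chi$ is a $C_w$-group on \mA\ by hypothesis, $\mA_{v^*_r} = \bessel r \mA$ is a \MA\ that is \IC\ in \mA. The equality $\mA_{v_r} = \mA_{v^*_r}$ from the previous step then transfers the \IC ness to $\mA_{v_r}$. There is no real obstacle here; the content is entirely in the weight-comparison step, which only works because solidity lets us pass from pointwise estimates on entries to norm estimates on matrices. Without solidity one would need to commute with the automorphism $\chi$ directly, which is exactly what Proposition~\ref{prop:besselmatrixspace} does for $v^*_r$.
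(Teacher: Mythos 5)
Your proposal is correct and follows exactly the route the paper intends: the corollary is stated immediately after Proposition~\ref{prop:weightes_MAs} with the remark that for solid algebras $v_r$ can replace $v^*_r$, and the implicit argument is precisely your pointwise equivalence $v_r \asymp v^*_r$ upgraded to a norm equivalence $\mA_{v_r}=\mA_{v^*_r}$ via solidity. Nothing further is needed.
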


We state the results of Proposition \ref{prop:besselvsbesov} and Proposition \ref{prop:besselonbesov} for  weighted \MA s.
\begin{prop}
  \label{prop:besselbesovMA}
  If $\mA$ is a \HMA, and $r,s >0$, then
  \begin{equation*}
  \begin{split}
    &\besov 1 r \mA \inject \mA_{v^*_r} \inject \besov r \infty \mA, \\
    &\besov p r {\mA_{v^*_s}} = \app p r {\mA_{v^*_s}} = (\besov p r \mA)_{v^*_s} = \besov p {r+s} \mA =  \app p {r+s} \mA.
  \end{split}
\end{equation*}
\end{prop}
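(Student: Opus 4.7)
The plan is to assemble the statement from the three main building blocks already established: Proposition~\ref{prop:besselmatrixspace} (identifying $\mA_{v_r^*}$ with $\bessel r \mA$), Proposition~\ref{prop:besselvsbesov} (the Besov/Bessel comparison), and Proposition~\ref{prop:besselonbesov} (the Besov-over-Bessel reiteration). The Jackson--Bernstein theorem (Theorem~\ref{prop:jacksonbernstein}) will then convert the Besov identities into approximation-space identities.

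First, I would dispose of the first line. By Proposition~\ref{prop:besselmatrixspace}, $\mA_{v_r^*}=\bessel r \mA$ as normed spaces. Plugging this into the chain $\besov 1 r \mA \inject \bessel r \mA \inject \besov \infty r \mA$ of Proposition~\ref{prop:besselvsbesov} immediately yields $\besov 1 r \mA \inject \mA_{v_r^*} \inject \besov \infty r \mA$.

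For the second line, the key observation is that \emph{$\besov p r \mA$ is again a \HMA\ carrying the automorphism group $\chi$ as a $C_w$-group}. That $\chi$ is an automorphism group on $\besov p r \mA$ is the content of the lemma preceding Proposition~\ref{prop:besovcontpart}, and the $C_w$-property was verified inside the proof of Proposition~\ref{prop:besselonbesov} (for $p<\infty$ via $C(\besov p r \mA)=\besov p r \mA$, for $p=\infty$ via the dual-pairing representation). Hence Proposition~\ref{prop:besselmatrixspace} can be applied with $\besov p r \mA$ in place of \mA, giving
\[
(\besov p r \mA)_{v_s^*} = \bessel s{\besov p r \mA}.
\]
Proposition~\ref{prop:besselonbesov} asserts $\bessel s{\besov p r \mA}=\besov p r{\bessel s \mA}=\besov p{r+s}\mA$, and using Proposition~\ref{prop:besselmatrixspace} once more rewrites the middle term as $\besov p r{\mA_{v_s^*}}$. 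This supplies the three identifications
\[
\besov p r{\mA_{v_s^*}} = (\besov p r \mA)_{v_s^*} = \besov p{r+s}\mA.
\]
Finally, since $\mA_{v_s^*}$ is a \BA\ with $C_w$-group (by Proposition~\ref{prop:weightes_MAs} and the preceding remarks), Theorem~\ref{prop:jacksonbernstein} yields $\besov p r{\mA_{v_s^*}} = \app p r{\mA_{v_s^*}}$, and applied to \mA\ itself it yields $\besov p{r+s}\mA = \app p{r+s}\mA$, closing the chain.

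The one point requiring genuine care, and in my view the main obstacle, is the verification that $\besov p r \mA$ carries the action of $\chi$ as a $C_w$-group for \emph{every} $1\leq p \leq \infty$, because the case $p=\infty$ is not covered by the easy observation $\mA=C(\mA)$. I would handle it exactly as suggested by the hint in the proof of Proposition~\ref{prop:besselonbesov}: exhibit a norm-fundamental subspace of $(\besov \infty r \mA)'$ by pairing through the difference quotients $\Delta_t^k/\abs t^r$ and using that $\mA'_\Psi$ is norm fundamental for \mA, then verify quasi-completeness (equivalently, condition \eqref{eq:convomp}) by the standard Krein--Smulian argument. Once this is in place, the rest of the proposition reduces to substitution into results already proved.
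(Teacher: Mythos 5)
Your proposal is correct and follows exactly the route the paper intends: the proposition is stated there without a separate proof, precisely as the combination of Proposition~\ref{prop:besselmatrixspace} (identifying $\mA_{v_r^*}$ with $\bessel r \mA$), Propositions~\ref{prop:besselvsbesov} and~\ref{prop:besselonbesov}, and the Jackson--Bernstein theorem, with the $C_w$-property of $\chi$ on $\besov p r \mA$ handled inside the proof of Proposition~\ref{prop:besselonbesov} just as you describe. Your explicit flagging of the $p=\infty$ case as the only point needing care matches the paper, which likewise delegates that verification to the argument of Proposition~\ref{reitbessel}.
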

\begin{ex}
  For the Schur algebras $\schur [p] r$ we obtain 
  \[
  \app q s {\schur [p] r}=\app q {s+r}{\schur [p] 0} \, .
  \]
\end{ex}


\appendix
\section{Proof of the Reiteration theorem}
\label{sec:proof-reit-theor}
We give a proof of Theorem~\ref{thm-reiteration}. We need some properties of the moduli of smoothness.
\begin{lem}\label{lem-modsmooth}
  If $l,k \in \bn$, $l\geq k$, $t \in \brd$ and $h>0$, then
  \begin{enumerate}
  \item \label{smoothordera}
    $\norm{\Delta^l_t (x)}_\mX \leq (M_\Psi+1)^k \norm{\Delta^{l-k}_t
      (x)}_\mX \text{ and } \omega^l_h (x) \leq (M_\Psi+1)^k
    \omega^{l-k}_h (x) \,,$
  \item \label{compmodconta} 
    \[
    \omega^k_t (x) \asymp \sup_{\substack{\abs{h_j}\leq t \\ 1 \leq j
        \leq k}} \norm{\bigl(\prod_{j=1}^k \Delta_{h_j}\bigr) x}_\mX.
    \]
  \end{enumerate}
\end{lem}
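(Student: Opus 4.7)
Part (1) is a direct operator-norm computation. Since $\Delta_t = \psi_t - \id$ commutes with itself, we have $\Delta^l_t = \Delta^k_t \Delta^{l-k}_t$, and each factor satisfies $\norm{(\psi_t - \id)y}_\mX \leq (\norm{\psi_t}_{\mX\to\mX}+1)\norm{y}_\mX \leq (M_\Psi + 1)\norm{y}_\mX$. Iterating $k$ times yields $\norm{\Delta^l_t(x)}_\mX \leq (M_\Psi+1)^k \norm{\Delta^{l-k}_t(x)}_\mX$, and taking the supremum over $\abs{t}\leq h$ gives the modulus-of-smoothness version.

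For part (2), the inequality $\omega^k_t(x) \leq \sup\{\norm{\prod_j \Delta_{h_j}x}_\mX : \abs{h_j}\leq t\}$ is immediate: specialize all $h_j$ to a common $s$ with $\abs{s}\leq t$ and observe $\prod_j \Delta_s = \Delta^k_s$.

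The reverse inequality is the heart of the proof. The plan is to establish, by induction on $k$, a polarization-type decomposition
\begin{equation*}
  \Delta_{h_1}\Delta_{h_2}\cdots\Delta_{h_k}\,x = \sum_\sigma c_\sigma\,\psi_{u_\sigma}\,\Delta^k_{v_\sigma}\,x,
\end{equation*}
where the sum is finite with cardinality and coefficients $c_\sigma$ depending only on $k$, and where $\abs{v_\sigma} \leq \max_{j}\abs{h_j}$. Granted such a decomposition, taking norms, using the uniform bound $\norm{\psi_{u_\sigma}}_{\mX\to\mX}\leq M_\Psi$, and taking the supremum over $\abs{h_j}\leq t$ yields $\sup_{\abs{h_j}\leq t}\norm{\prod_j\Delta_{h_j}x}_\mX \ls \omega^k_t(x)$ with constants depending only on $k$ and $M_\Psi$. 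The base case $k=2$ is the explicit identity
\begin{equation*}
  \Delta_{h_1}\Delta_{h_2}\,x = \Delta^2_{(h_1+h_2)/2}\,x - \psi_{h_2}\Delta^2_{(h_1-h_2)/2}\,x,
\end{equation*}
verified by expanding both sides to the alternating sum $\psi_{h_1+h_2}x - \psi_{h_1}x - \psi_{h_2}x + x$. The inductive step applies this pair-contraction identity to successive factors of the mixed product (using that all $\psi_t$ commute), each contraction replacing two step-vectors $h,h'$ by the averages $(h\pm h')/2$, both of norm at most $\max(\abs{h},\abs{h'})$. Hence step sizes never exceed $\max_j\abs{h_j}$ throughout the reduction.

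The main obstacle is the combinatorial bookkeeping in the induction: one must show that a bounded-in-$k$ number of pair contractions suffices to bring every surviving summand into the equal-step form $\psi_{u_\sigma}\Delta^k_{v_\sigma}\,x$, while keeping the total number of terms and the absolute sum of the coefficients bounded uniformly in the $h_j$. This amounts to a symmetrization argument for $k$-tuples by iterated averaging of entries, and is the precise point where the classical polarization identity for symmetric $k$-linear forms enters, now realized in operator form through the $k=2$ identity above.
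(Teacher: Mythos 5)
Your part (1) and the ``$\leq$'' direction of part (2) are correct and routine. The gap is in the reverse inequality of part (2), which you rightly identify as the heart of the matter but then do not prove. Your $k=2$ identity
\[
\Delta_{h_1}\Delta_{h_2}=\Delta^2_{(h_1+h_2)/2}-\psi_{h_2}\Delta^2_{(h_1-h_2)/2}
\]
does check out (both sides expand to $\psi_{h_1+h_2}-\psi_{h_1}-\psi_{h_2}+\id$, using $\psi_{h_2}\psi_{(h_1-h_2)/2}=\psi_{(h_1+h_2)/2}$), and it yields $\norm{\Delta_{h_1}\Delta_{h_2}x}_\mX\leq(1+M_\Psi)\,\omega^2_{\max(\abs{h_1},\abs{h_2})}(x)$. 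But for $k\geq3$ the inductive step you describe does not terminate: contracting the pair $(h_1,h_2)$ produces equal-step factors with step $v=(h_1\pm h_2)/2$, and the next contraction of $v$ against $h_3$ yields steps $(v\pm h_3)/2$ that are generically distinct from the surviving copy of $v$; repeated averaging drives the step multiset toward an equal-step configuration only in the limit, never after finitely many contractions for generic $h_j$, while each contraction doubles the number of summands. So the finite decomposition $\Delta_{h_1}\cdots\Delta_{h_k}x=\sum_\sigma c_\sigma\psi_{u_\sigma}\Delta^k_{v_\sigma}x$ with boundedly many terms, $\sum_\sigma\abs{c_\sigma}\leq C(k)$ and $\abs{v_\sigma}\leq\max_j\abs{h_j}$ --- which is essentially equivalent to the inequality you are trying to prove --- is asserted, not established. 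Calling this ``combinatorial bookkeeping'' and ``the main obstacle'' is an accurate diagnosis, not a proof.

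For comparison: the paper does not prove part (2) either, but defers to \cite[5.4.11]{bennett88}. The standard argument behind that reference avoids any exact polarization identity: one shows that both $\omega^k_t(x)$ and $\sup_{\abs{h_j}\leq t}\norm{\bigl(\prod_{j}\Delta_{h_j}\bigr)x}_\mX$ are equivalent to the K-functional between $\mX$ and the domain of the $k$-th order derivations. For smooth $g$ the representation $\Delta_hg=\int_0^1\psi_{sh}\,\delta_h(g)\,ds$ iterates to $\norm{\Delta_{h_1}\cdots\Delta_{h_k}g}_\mX\ls\prod_j\abs{h_j}\sum_{\abs\alpha=k}\norm{\delta^\alpha g}_\mX$; the splitting $x=(x-g)+g$ then bounds the mixed difference by the K-functional at $t^k$, and the Johnen--Scherer estimate bounding that K-functional by $\omega^k_t(x)$ (proved with Steklov-type averages built from equal-step differences) closes the loop. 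If you want to keep a purely combinatorial route you must exhibit and verify the general-$k$ identity explicitly; it is not obtained by iterating the $k=2$ case.
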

  The proof of (\ref{smoothordera}) is an 
  easy calculation in complete analogy to the corresponding properties of the moduli of smoothness for functions. See, e.g.,~\cite{DeVore93}. Item~\ref{compmodconta} is proved in~\cite[5.4.11]{bennett88}.

\begin{proof}[Proof of the Reiteration theorem]
  We assume first that $x$ is in $\Lambda^q_{s+r}(\mA)$ and estimate $\norm{x}_{ \Lambda^q_s(\Lambda^p_r(\mA))}$.  As $
  \norm{x}_{\Lambda^q_s(\Lambda^p_r(\mA))}= \norm{x}_{\Lambda^p_r(\mA)}+ \abs{x}_{\Lambda^q_s(\Lambda^p_r(\mA))}$ and  the inclusion relations of Besov spaces imply that $\norm{x}_{\besov p r \mA} \leq C \norm{x}_{\besov q {r+s} \mA}$,  it  suffices to estimate
  $\abs{x}_{\Lambda^q_s(\Lambda^p_r(\mA))}$.

  Assume that $\floor r < m$ and $\floor s <n$, $m,n \in \bn$. Using the norm equivalences in \eqref{eq:besovnormeqs} we can write
  \begin{equation}
    \label{eq:reit1}
    \begin{split}
    \abs{x}_{\besov q s {\besov p r \mA}} &\asymp 
    \Biggl\lbrace \int_{\br^+} \Bigl[ h^{-s} \omega^{n+m}_h(x, \besov p r \mA) \Bigr ] ^q \muleb{h}\Biggr \rbrace^{1/q}\\
   & =
    \norm{h^{-s} \omega^{n+m}_h (x, \besov p r \mA)}_{L^q_*} \,,
  \end{split}
\end{equation}
where $\norm{f}_{L^q_*}= \bigl(\int_0^\infty f(t)^q \muleb {t}\bigr)^{1/q}$.
  An estimate of the modulus of smoothness is
  \begin{equation}
    \label{eq:reit2}
    \begin{split}
      \omega^{n+m}_h(x, \besov p r \mA)& = \sup_{\abs u \leq h} \norm{\Delta^{n+m}_u x}_{\besov p r \mA}\\
      &\leq \sup_{\abs u \leq h} \norm{\Delta^{n+m}_u x}_\mA +
      \sup_{\abs u \leq h}  \abs{\Delta^{n+m}_u x}_{\besov p r \mA} \\
      & \lesssim \sup_{\abs u \leq h} \norm{\Delta^{n+m}_u x}_\mA + \sup_{\abs u \leq h} \abs{\Delta^{n+m}_u x}_{\besov 1 r \mA} \,,
    \end{split}
  \end{equation}
 where the last inequality uses the embedding $\besov 1 r \mA \inject \besov p r \mA$ for $p \geq 1$.

  Inserting this estimate into~\eqref{eq:reit1} we obtain
  \begin{equation}
    \label{eq:reit3}
    \abs{x}_{\besov q s {\besov p r \mA}} \lesssim \abs{x}_{\besov q s \mA} +
    \norm{h^{-s} \sup_{\abs u \leq h} \abs{\Delta^{n+m}_u x}_{\besov 1 r \mA}}_{L^q_*}  \,.
  \end{equation}
  With
  $
  \phi(v,u)= \norm{\Delta^{n+m}_v \Delta^{n+m}_u x}_\mA
  $
  the $L^q_*$-norm in \eqref{eq:reit3} can be rewritten as
  \begin{equation*}
    \label{eq:reit4}
    \begin{split}
      & \norm{h^{-s}\sup_{\abs u \leq h}
        \int_{\br^+} t^{-r}\sup_{\abs v \leq t} \phi(v,u) \muleb{t}}_{L^q_*}\\
      &\leq \norm{h^{-s}\sup_{\abs u \leq h} \int_0^h t^{-r}\sup_{\abs v \leq t} \phi(v,u) \muleb{t}}_{L^q_*} + \norm{h^{-s}\sup_{\abs u \leq h}
        \int_h^\infty  t^{-r}\sup_{\abs v \leq t} \phi(v,u) \muleb{t}}_{L^q_*}\\
      = :\text{I} +\text{II}.
    \end{split}
  \end{equation*}
  We can  estimate the first term further using Hardy's inequality.
  \begin{equation*}
    \label{eq:reit5}
    \begin{split}
      \text{I}^q &= \int_0^\infty h^{-s q} \sup_{\abs u \leq h} \biggl[ \int_0^h \sup_{\abs v \leq t} t^{-r}\phi(v,u) \muleb{t} \biggr] ^q
      \muleb{h} \\
      &\leq \int_0^\infty h^{-s q} \biggl[ \int_0^h \sup_{\abs {v},\abs{u} \leq h} t^{-r}\phi(v,u) \muleb{t} \biggr] ^q \muleb{h}  \\
      &\overset {(*)}{\ls } \int_0^\infty \biggl( t^{-(r+s)} \sup_{\abs {v},\abs{u} \leq t}\phi(v,u)   \biggr) ^q \muleb{t}\\
      & \overset{(**)}{\ls} \int_0^\infty \biggl( t^{-(r+s)}\omega^{2(n+m)}_t(x,\mA)\biggr)^q \muleb{t} = \abs{x}^q_{\besov q {r + s} \mA} \,,
    \end{split}
  \end{equation*}
  $(*)$ by Hardy's inequality, and $(**)$ using Lemma~\ref{lem-modsmooth}(\ref{compmodconta}).  For the second term we use (\ref{smoothordera}) of Lemma~\ref{lem-modsmooth} to get
  \[
  \phi(v,u)=\norm{\Delta^{n+m}_v\Delta^{n+m}_u x}_\mA \lesssim \norm{\Delta^{n+m}_u x}_\mA.
  \]
  Then $\sup_{\abs v \leq t} \phi(u,v)$ is independent of $t$, and 
  \begin{equation}
    \label{eq:reit4}
    \begin{split}
      \text{II}^q &\lesssim \int_0^\infty \biggl( h^{-s} \sup_{\abs u \leq h} \int_h^\infty t^{-r}\norm{\Delta^{n+m}_u x}_\mA \muleb{t} \biggr)^q
      \muleb{h}\\
      &= \int_0^\infty h^{-(r + s) q}\sup_{\abs u \leq h} \norm{\Delta^{n+m}_u x}^q_\mA \muleb{h} =\abs{x}^q_{\besov q {r + s} \mA}.
    \end{split}
  \end{equation}
  I and II together give the desired estimate.

  For the converse assume that $x \in \besov q s {\besov p r \mA}$. Then, 
  \begin{equation}\label{eq:22}
  \begin{split}
    \abs{x}^q_{\besov q {r + s } \mA} &\asymp \int_{\brd} \biggl(\abs{t}^{-(r+s)} \norm{\Delta_t^{n+m}x}_\mA\biggr)^q \mulebd{t}\\
   & \asymp \int_{\brd} \abs{t}^{-r q }\biggl( \int_{\abs \eta \geq \abs t}\abs{\eta}^{-s p}\norm{\Delta_t^{n+m}x}^p_\mA \mulebd{\eta} \biggr)^{q/p} \mulebdii{t} \,,
  \end{split}
 \end{equation}
 where we have used  
$
\abs t ^{-s} \asymp \bigl(\int_{\abs \eta \geq \abs t }\abs{\eta}^{-s p} \mulebdii{\eta} \bigr)^{1/p}
$ 
for the last equivalence.
As 
$
\norm{\Delta^{n+m}_tx}_\mA \leq \sup_{\abs v \leq \abs {\eta}}\norm{\Delta^m_v\Delta^n_t}_\mA
$
for $\abs \eta \geq \abs t$ , we can dominate the right hand side of (\ref{eq:22}) by
\[
\begin{split}
&\int_{\brd} \abs{t}^{-r q }\sup_{\abs u \leq \abs t}\biggl( \int_{\abs \eta \geq \abs t}\abs{\eta}^{-s p}\sup_{\abs v \leq \abs \eta}\norm{\Delta_v^n\Delta_u^{m} x}^p_\mA \mulebd{\eta} \biggr)^{q/p} \mulebd{t}\\
&\leq
\int_{\brd} \abs{t}^{-r q }\sup_{\abs u \leq \abs t}\biggl( \int_{\brd}\abs{\eta}^{-s p}\sup_{\abs v \leq \abs \eta}\norm{\Delta_v^n\Delta_u^{m} x}^p_\mA \mulebd{\eta} \biggr)^{q/p} \mulebd{t}\\
&\leq
\int_{\brd} \abs{t}^{-r q }\sup_{\abs u \leq \abs t}(\abs{\Delta_u^m x}_{\besov p s \mA})^q \mulebd{t}\\
&\leq \abs{x}^q_{\besov q r {\besov p s \mA}}.
\end{split}
\]
\end{proof}

\section{Jackson Bernstein Theorem}
\label{sec:jacks-bernst-theor}

%
  \begin{prop}[{\cite[5.12]{grkl10}}] \label{prop:jackson1} Let $a \in \mA$ and $\sigma>0$.
\begin{enumerate}
   \item \label{jackhoeld} There is a $\sigma$-\BL\ element $a_\sigma \in C(\mA )$ such that
    \begin{equation*}
      \norm{a-a_\sigma}_\mA \leq C \omega_{1/\sigma}(a)
    \end{equation*}
    with $C$ independent of $\sigma$ and $a$.
   \item \label{jackdiff} If $\delta^\alpha (a) \in C(\mA)$, for all multi-indices $\alpha$ with $\abs \alpha =k$ then there exists a $\sigma$-\BL\ element $a_\sigma \in \mA$ such that
    \begin{equation*}
      \norm{a-a_\sigma}_\mA \leq C \sigma^{-k}\sum_{\abs\alpha =k}\omega^2_{1/\sigma}(\delta^\alpha a)\, .
    \end{equation*}
  \end{enumerate}
\end{prop}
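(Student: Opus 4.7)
The plan is to build $a_\sigma$ by convolving $a$ with a carefully scaled Schwartz kernel. Fix once and for all a $\phi \in \schwartz(\brd)$ that is symmetric ($\phi(-t)=\phi(t)$), satisfies $\hat\phi(0)=1$, and has $\supp\hat\phi \subset \{\abs\omega_2\leq 1\}$. Set $\phi_\sigma(t)=\sigma^d\phi(\sigma t)$, so that $\hat\phi_\sigma(\omega)=\hat\phi(\omega/\sigma)$ is supported in $\{\abs\omega_2\leq\sigma\}$ with $\int\phi_\sigma\,dt=1$, and define the approximant $a_\sigma := \phi_\sigma * a$, interpreting the convolution through the weak integral~\eqref{eq:weakmodule} in the $C_w$-setting (or as a Bochner integral when $a \in C(\mA)$). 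The membership $a_\sigma \in C(\mA)$ follows from the identity $\psi_s(a_\sigma)=(T_{-s}\phi_\sigma)*a$ together with continuity of translation in $L^1(\brd)$. For the bandlimited property, \eqref{eq:derivderiv} gives $\delta^\alpha(a_\sigma)=(D^\alpha\phi_\sigma)*a$, so $\norm{\delta^\alpha a_\sigma}_\mA \leq M_\Psi \norm{D^\alpha\phi_\sigma}_{L^1}\norm{a}_\mA$. Selecting $\phi$ as a tensor product of a 1D Schwartz bump with sufficiently small compact Fourier support ensures $\norm{D^\alpha\phi_\sigma}_{L^1}\leq C(2\pi\sigma)^{\abs\alpha}$ with $C$ uniform in $\alpha$, verifying the bandlimited definition~\eqref{eq:20}.

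For part~\ref{jackhoeld}, since $\int\phi_\sigma\,dt=1$ we may write
\[
  a - a_\sigma = -\int_{\brd}\phi_\sigma(t)\,\Delta_{-t}(a)\,dt ,
\]
so $\norm{a-a_\sigma}_\mA \leq \int_{\brd}\norm{\Delta_t(a)}_\mA\, \abs{\phi_\sigma(t)}\,dt$. The telescoping identity $\Delta_{nt} = \Delta_t \sum_{j=0}^{n-1}\psi_{jt}$ yields $\norm{\Delta_{nt}(a)}_\mA\leq nM_\Psi\norm{\Delta_t(a)}_\mA$, and applying this with $n=\lceil\sigma\abs t\rceil$ delivers the Marchaud-type estimate
\[
\norm{\Delta_t(a)}_\mA \leq C(1+\sigma\abs t)\,\omega_{1/\sigma}(a) .
\]
The substitution $s=\sigma t$ then reduces $\int(1+\sigma\abs t)\abs{\phi_\sigma(t)}\,dt$ to the $\sigma$-independent constant $\int(1+\abs s)\abs{\phi(s)}\,ds$, proving~\ref{jackhoeld}.

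For part~\ref{jackdiff} I require in addition that $\phi$ have vanishing moments of all positive orders, i.e.~$\int\phi(t)t^\beta\,dt=0$ for every multi-index $\beta$ with $\abs\beta\geq 1$; this is arranged by choosing $\hat\phi\equiv 1$ on a neighborhood of $0$, making $\phi$ a de la Vall\'ee-Poussin-type kernel. Under the hypothesis that $\delta^\alpha(a)\in C(\mA)$ for every $\abs\alpha=k$, I apply a Taylor expansion along the orbit $s\mapsto\psi_{-st}(a)$ with integral remainder of order $k$,
\[
  \psi_{-t}(a) - a = \sum_{1\leq\abs\beta<k}\frac{(-t)^\beta}{\beta!}\delta^\beta(a) + \sum_{\abs\alpha=k}\frac{(-t)^\alpha}{\alpha!}\, k\!\int_0^1(1-s)^{k-1}\psi_{-st}\delta^\alpha(a)\,ds .
\]
Integrating against $\phi_\sigma$, the finite sum of orders $<k$ vanishes by the vanishing-moment condition. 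In the order-$k$ remainder, exploiting the symmetry $\phi(-t)=\phi(t)$ to average $t\mapsto -t$ replaces $\psi_{-st}\delta^\alpha(a)$ by $\tfrac12(\psi_{st}+\psi_{-st})\delta^\alpha(a) = \delta^\alpha(a) - \tfrac12 \psi_{-st}\Delta_{st}^2(\delta^\alpha a)$; again the $\delta^\alpha(a)$ piece is annihilated by the order-$k$ moment condition, leaving only an integral of second differences. Combining the bound $\norm{\Delta_{st}^2(\delta^\alpha a)}_\mA \leq C(1+\sigma\abs t)^2\,\omega^2_{1/\sigma}(\delta^\alpha a)$ (telescoping applied twice) with the scaling $\int\phi_\sigma(t)\abs t^k\,dt = \sigma^{-k}\int\phi(s)\abs s^k\,ds$ produces the claimed $\sigma^{-k}\omega^2_{1/\sigma}(\delta^\alpha a)$ bound.

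The main technical obstacle is producing $\omega^2$ rather than $\omega$ in part~\ref{jackdiff}. This demands the dual device of a symmetric kernel $\phi$ with vanishing moments of all positive orders, together with the symmetrization $t\mapsto -t$ inside the integral, which upgrades the first-order difference of $\delta^\alpha(a)$ naturally appearing in the Taylor remainder into a second-order difference. A secondary point is the uniform-in-$\alpha$ Bernstein inequality needed for $a_\sigma$ to meet the definition~\eqref{eq:20} of $\sigma$-bandlimited; this is settled by the tensor-product construction of $\phi$ from a 1D building block with small compact Fourier support, which controls $\norm{D^\alpha\phi_\sigma}_{L^1}$ coordinatewise.
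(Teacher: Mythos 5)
The paper does not prove this proposition itself --- it cites \cite[5.12]{grkl10} --- but your strategy (smoothing by convolution with a scaled kernel whose Fourier transform is compactly supported and identically $1$ near the origin) is exactly the standard one, and your part~(\ref{jackhoeld}) is correct: the identity $a-a_\sigma=-\int\phi_\sigma(t)\Delta_{-t}(a)\,dt$, the subadditivity bound $\norm{\Delta_t(a)}_\mA\leq M_\Psi(1+\sigma\abs t)\omega_{1/\sigma}(a)$, and the scaling of the moment integral all check out (modulo the caveat, which you flag yourself, that $\phi_\sigma*a$ needs either $a\in C(\mA)$ or a $C_w$-structure to be defined).

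There is, however, a genuine gap in part~(\ref{jackdiff}): the symmetrization step fails when $k=\abs\alpha$ is \emph{odd}. The order-$k$ remainder carries the factor $(-t)^\alpha$, which is an odd function of $t$ when $\abs\alpha$ is odd, so averaging the integrand over $t\mapsto -t$ (using $\phi(-t)=\phi(t)$) replaces $\psi_{-st}\delta^\alpha(a)$ not by $\tfrac12(\psi_{st}+\psi_{-st})\delta^\alpha(a)$ but by $\tfrac12(\psi_{-st}-\psi_{st})\delta^\alpha(a)$. The latter equals $\tfrac12(\Delta_{-st}-\Delta_{st})\delta^\alpha(a)$, which is a combination of \emph{first} differences and cannot be rewritten as ``$\delta^\alpha(a)$ plus a second difference''; the moment condition then kills nothing useful, and the best you extract is a bound by $\sigma^{-k}\sum_{\abs\alpha=k}\omega^1_{1/\sigma}(\delta^\alpha a)$. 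Since $\omega^2_h\lesssim\omega^1_h$ and not conversely, this is strictly weaker than the claimed $\omega^2$ bound, and the difference matters precisely in the application: for integer smoothness $r=k+1$ one needs $\int_0^1(\tau^{-1}\omega^2_\tau(\delta^\alpha a))^p\,d\tau/\tau$, and a first-order modulus there would describe the wrong (smaller) space. Note that already $k=1$ is affected. The even-$k$ branch of your argument is fine. A standard repair is to replace $\phi_\sigma*a$ by the de la Vall\'ee-Poussin iterate $a_\sigma=(2\phi_\sigma-\phi_\sigma*\phi_\sigma)*a$, so that $a-a_\sigma=\int_{\brd}\int_{\brd}\phi_\sigma(t)\phi_\sigma(u)\,\Delta_{-t}\Delta_{-u}(a)\,dt\,du$ is an honest double difference to which the Taylor expansion and Lemma~\ref{lem-modsmooth}(\ref{compmodconta}) can be applied; this produces the second-order modulus for every $k$ and keeps $a_\sigma$ bandlimited of order $\asymp\sigma$.
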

  \begin{cor} \label{p:ch47} If $a \in \besov p r \mA$ for $r >0$, then $a \in \aps  r (\mA)$.
  \end{cor}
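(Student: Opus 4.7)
I plan to prove the embedding $\besov p r \mA \inject \aps r (\mA) = \mE^p_r(\mA)$ by a Jackson-type argument: bound the approximation error $E_\sigma(a)$ via Proposition~\ref{prop:jackson1} in terms of moduli of smoothness of derivatives of $a$, then recover the Besov seminorm by the substitution $h = 1/\sigma$.

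Assume first $1 \le p < \infty$. I would fix $k = \lceil r \rceil - 1 \in \bn_0$, so that $r - k \in (0,1]$. By Proposition~\ref{prop:besovcontpart} we have $\besov p r \mA = C(\besov p r \mA)$, and the derivative version of the Besov norm in~\eqref{eq:besovnormeqs} yields $\delta^\alpha a \in \besov p {r-k} \mA \inject C(\mA)$ for every multi-index $\alpha$ with $\abs\alpha = k$. The hypothesis of Proposition~\ref{prop:jackson1}(\ref{jackdiff}) is therefore satisfied and gives
\[
E_\sigma(a) \ls \sigma^{-k} \sum_{\abs\alpha = k} \omega^2_{1/\sigma}(\delta^\alpha a).
\]
Splitting the integral defining $\norm a_{\aps r}^p$ at $\sigma = 1$ and bounding the piece on $(0,1]$ trivially by $\norm a_\mA^p$ via $E_\sigma(a) \le \norm a_\mA$, the substitution $h = 1/\sigma$ on $[1,\infty)$ turns the remaining tail into
\[
\sum_{\abs\alpha = k} \int_0^1 \bigl(h^{-(r-k)} \omega^2_h(\delta^\alpha a)\bigr)^p \frac{dh}{h} \le \sum_{\abs\alpha = k} \abs{\delta^\alpha a}_{\besov p {r-k} \mA}^p,
\]
where $\omega^2$ is admissible for $\besov p {r-k} \mA$ because $\floor{r-k} \le 1 < 2$. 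Combining with the derivative form of~\eqref{eq:besovnormeqs} delivers $\norm a_{\aps r} \ls \norm a_{\besov p r \mA}$. For $p = \infty$ the analogous argument with suprema replacing $L^p$-averages gives the same conclusion (and was already carried out in~\cite{grkl10}).

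I expect the only nontrivial step to be the bookkeeping in the substitution $h = 1/\sigma$, namely matching the power of $h$, the measure $\frac{dh}{h}$, and the order of the modulus produced by Proposition~\ref{prop:jackson1}(\ref{jackdiff}) to the equivalent Besov norms from~\eqref{eq:besovnormeqs}. The choice $r - k \in (0,1]$ is what simultaneously keeps $r - k$ strictly positive (so $\besov p {r-k}\mA$ is a genuine Besov space, even when $r \in \bn$) and makes the second-order modulus admissible for it.
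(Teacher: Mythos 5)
Your proposal is correct and follows essentially the same route as the paper: both rest on the Jackson-type bound of Proposition~\ref{prop:jackson1}, the substitution $h=1/\sigma$, and the equivalent (derivative) Besov norms from~\eqref{eq:besovnormeqs}. The only cosmetic difference is that you treat all $r>0$ uniformly via part~(\ref{jackdiff}) with $k=\lceil r\rceil-1$, whereas the paper uses part~(\ref{jackhoeld}) for $0<r<1$ and part~(\ref{jackdiff}) for $r\geq 1$; your explicit verification that $\delta^\alpha a\in C(\mA)$ is a welcome detail the paper leaves implicit.
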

\begin{proof} We use the integral version of the norm for an approximation space in~\eqref{eq:appspace} and assume that $1 \leq p <\infty$. The proof for $p=\infty$ is simpler and done in \cite{grkl10}.
  
  Assume first that  $0<r < 1$. Then, by Proposition~\ref{prop:jackson1}(\ref{jackhoeld}),
\[
\int_1^\infty \bigl(E_\sigma(a)\sigma^r \bigr)^p\muleb{\sigma} \leq C \int_0^1 \bigl(\omega_\tau(a)\tau^{-r}\bigr)^p \muleb{\tau}
\leq C \abs{a}^p_{\besov p r \mA}  \,,
\]
and so the approximation norm is dominated by the Besov norm.

Likewise, if $r =k+ \eta, 0 < \eta \leq 1$, and $k \in \bn$,  Proposition~\ref{prop:jackson1}(2) yields
 \begin{align*}
   \int_1^\infty \bigl(E_\sigma(a)\sigma^r \bigr)^p\muleb{\sigma} 
\leq   C \sum_{\abs \alpha = k} \int_0^1 \bigl(\omega_\tau^2(\delta^\alpha (a))\tau^{-\eta}\bigr)^p \muleb{\tau} \,
 \end{align*}
and again  $\norm{a}_{\aps  r (\mA)}$ is dominated by the Besov norm.
\end{proof}
Before proving the converse implication in Theorem~\ref{prop:jacksonbernstein}, i.e., the Bernstein-type result,
we need a mean-value property of automorphism groups.
\begin{lem}[{\cite[5.15]{grkl10}}] \label{lem:ch45} 
 If $a$ is $\sigma$-\BL, then
  \begin{equation}
    \label{eq:49}
    \norm{\Delta_t a }_\mA \leq C\sigma\,  \abs{t}\, \norm{a}_\mA \, .
  \end{equation}
\end{lem}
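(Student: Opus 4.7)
The plan is to derive this Bernstein-type inequality by reducing to one-parameter differences along coordinate axes and then applying the fundamental theorem of calculus for one-parameter groups, together with the size bound on the first-order generator coming from the $\sigma$-bandlimited hypothesis.

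First, I would telescope $\Delta_t(a)$ along the coordinate directions. Writing $t = \sum_{j=1}^d t_j e_j$ and $u_k = t_1 e_1 + \cdots + t_{k-1} e_{k-1}$, we have
\[
\Delta_t(a) \;=\; \sum_{k=1}^d \psi_{u_k}\bigl(\psi_{t_k e_k}(a) - a\bigr),
\]
so by the uniform bound $M_\Psi$ on $\Psi$,
\[
\norm{\Delta_t(a)}_\mA \;\leq\; M_\Psi \sum_{k=1}^d \norm{\Delta_{t_k e_k}(a)}_\mA.
\]
This reduces the estimate to the one-parameter case $\norm{\Delta_{h e_k}(a)}_\mA$.

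Second, since $a$ is $\sigma$-bandlimited it lies in $\mD(\delta^\alpha,\mA)$ for every multi-index $\alpha$; in particular, for the one-parameter subgroup $\{\psi_{s e_k}\}_{s \in \br}$ the standard semigroup identity
\[
\psi_{h e_k}(a) - a \;=\; \int_0^h \psi_{s e_k}\bigl(\delta_{e_k}(a)\bigr)\, ds
\]
holds, with the integral taken in the Bochner sense: this is legitimate because $\delta_{e_k}(a)$ is itself $\sigma$-bandlimited (the same Bernstein bounds hold with one index shifted), hence lies in $C(\mA)$, so the integrand is norm-continuous in $s$. Taking norms and using $\norm{\psi_{se_k}}_{\mA \to \mA} \leq M_\Psi$ gives
\[
\norm{\Delta_{h e_k}(a)}_\mA \;\leq\; M_\Psi\,\abs{h}\,\norm{\delta_{e_k}(a)}_\mA.
\]

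Third, combine: the $\abs\alpha=1$ case of the bandlimited definition bounds $\norm{\delta_{e_k}(a)}_\mA$ by a constant times $2\pi\sigma$, so
\[
\norm{\Delta_t(a)}_\mA \;\leq\; 2\pi M_\Psi^2 \,\sigma \sum_{k=1}^d \abs{t_k} \,\cdot\, (\text{const}) \;\lesssim\; \sigma\,\abs{t}\,(\text{const}).
\]
To produce the form with $\norm{a}_\mA$ on the right-hand side, I would split into two regimes: for $\sigma \abs{t} \leq 1$ the above bound applies directly (with the bandlimited constant absorbed into the overall $C$), while for $\sigma \abs{t} > 1$ the trivial inequality $\norm{\Delta_t(a)}_\mA \leq (M_\Psi + 1) \norm{a}_\mA \leq (M_\Psi+1) \sigma \abs{t} \norm{a}_\mA$ suffices. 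Combining the two regimes yields the claim.

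The main obstacle is the justification of the fundamental theorem of calculus representation in the abstract Banach algebra setting, which requires knowing that $s \mapsto \psi_{se_k}(\delta_{e_k}(a))$ is norm-continuous — i.e., that $\delta_{e_k}(a) \in C(\mA)$. This is not completely automatic but follows from the $\sigma$-bandlimitedness of $\delta_{e_k}(a)$: any element with the Bernstein-type bound is in $\mD(\delta_{e_j})$ for every $j$, and the telescoping estimate used above shows that such an element is automatically in $C(\mA)$. Once this continuity is in hand, the rest of the argument is standard calculus with one-parameter groups.
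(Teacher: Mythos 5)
The paper does not prove this lemma itself; it is quoted from \cite[5.15]{grkl10}, where the argument runs through a reproducing identity: a $\sigma$-bandlimited element satisfies $a=g_\sigma * a$ for a dilated kernel $g_\sigma(x)=\sigma^d g(\sigma x)$ with $\hat g\equiv 1$ on a neighbourhood of the spectrum, whence $\Delta_t a=\Delta_t(g_\sigma)*a$ and $\norm{\Delta_t a}_\mA\le M_\Psi\norm{\Delta_t g_\sigma}_{L^1}\norm{a}_\mA\le M_\Psi\norm{\nabla g}_{L^1}\,\sigma\abs{t}\,\norm{a}_\mA$. Your route is different, and it has a genuine gap at its final step. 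The definition of $\sigma$-bandlimitedness only asserts the \emph{existence} of some constant $C=C_a$ with $\norm{\delta^\alpha(a)}_\mA\le C_a(2\pi\sigma)^{\abs\alpha}$; the $\abs\alpha=1$ case therefore gives $\norm{\delta_{e_k}(a)}_\mA\le 2\pi\sigma\,C_a$, and your telescoping plus the fundamental theorem of calculus yields $\norm{\Delta_t a}_\mA\lesssim\sigma\abs{t}\,C_a$. But $C_a$ is a property of $a$ with no a priori comparison $C_a\lesssim\norm{a}_\mA$ (only the trivial reverse inequality $C_a\ge\norm{a}_\mA$ from $\alpha=0$ is free). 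Declaring that the bandlimited constant can be ``absorbed into the overall $C$'' is exactly the point at issue: the constant in \eqref{eq:49} must be independent of $a$ and $\sigma$, and this is essential downstream, e.g.\ in the proof of Proposition~\ref{p:ch46}, where the lemma is applied to every term $a_k\in X_{2^k}$ of a series with one uniform constant. Passing from $C_a$ to $\norm{a}_\mA$ is precisely the content of a Bernstein inequality $\norm{\delta_{e_k}(a)}_\mA\le C\sigma\norm{a}_\mA$ with universal $C$, which your argument presupposes rather than proves; the regime split $\sigma\abs t\le 1$ versus $\sigma\abs t>1$ does nothing to address it.

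The rest of your outline is sound: the coordinate telescoping, the observation that membership in $\mD(\delta_{e_j})$ for all $j$ already forces strong continuity (so the Bochner integral and the norm-$C^1$ fundamental theorem of calculus are legitimate), and the trivial bound in the regime $\sigma\abs t>1$ are all fine. To close the gap you would either have to establish the uniform Bernstein bound $\norm{\delta_{e_k}(a)}_\mA\le C\sigma\norm{a}_\mA$ first --- which in this abstract setting is again done via the reproducing kernel, so you may as well estimate $\Delta_t(g_\sigma)*a$ directly --- or replace your second and third steps by that convolution argument.
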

 \begin{prop}\label{p:ch46} 
  Let $a \in \mA$, and $r >0$, $1 \leq p \leq \infty$. If $a \in \app p r \mA$, then $a \in
  \besov p r \mA$.
\end{prop}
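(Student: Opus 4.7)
The plan is to establish the Bernstein direction of Theorem~\ref{prop:jacksonbernstein} by a dyadic decomposition of $a$ into bandlimited blocks and combining the Bernstein inequality of Lemma~\ref{lem:ch45} with a discrete Hardy-type summation. I would rely on the equivalent Besov norm
\[
\norm{a}_{\besov p r \mA} \asymp \norm{a}_\mA + \Bigl( \sum_{j=0}^\infty \bigl(2^{rj} \omega^l_{2^{-j}}(a)\bigr)^p \Bigr)^{1/p}
\]
from \eqref{eq:besovnormeqs}, with $l = \lfloor r \rfloor + 1$.

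First I would construct the dyadic block decomposition. For each $k \geq 0$ pick a near-best approximant $a_k \in X_{2^k}$ with $\norm{a - a_k}_\mA \leq 2 E_{2^k}(a)$, and set $a_{-1} = 0$. Since $r>0$ forces $E_{2^k}(a) \to 0$, we have $a_k \to a$ in $\mA$, hence $a = \sum_{k=-1}^\infty b_k$ with $b_k := a_{k+1} - a_k$, and each $b_k \in X_{2^{k+2}}$ (by the property $X_\sigma + X_\tau \subseteq X_{\sigma + \tau}$ of the approximation scheme). The triangle inequality gives $\norm{b_k}_\mA \leq 4 E_{2^k}(a)$ for $k \geq 0$ and $\norm{b_{-1}}_\mA \leq 2\norm{a}_\mA$.

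Next I would estimate $\omega^l_h(b_k)$. Iterating Lemma~\ref{lem:ch45} $l$ times yields $\norm{\Delta^l_t b_k}_\mA \lesssim (2^{k} \abs{t})^l \norm{b_k}_\mA$, while the trivial bound gives $\norm{\Delta^l_t b_k}_\mA \lesssim \norm{b_k}_\mA$. Combining,
\[
\omega^l_h(b_k) \lesssim \min\bigl((2^k h)^l,\, 1\bigr) \,\norm{b_k}_\mA.
\]
Summing over $k$ and splitting at $k = j$ for $h = 2^{-j}$,
\[
\omega^l_{2^{-j}}(a) \lesssim \sum_{k \leq j} 2^{(k-j)l}\norm{b_k}_\mA + \sum_{k > j} \norm{b_k}_\mA.
\]

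The last step is the summation. Multiplying by $2^{jr}$ and taking the $\ell^p$-norm in $j$, both pieces are discrete convolutions of the sequence $(2^{kr}\norm{b_k}_\mA)_k$ with an $\ell^1$-kernel: the first with $(2^{-(j-k)(l-r)})_{j \geq k}$, whose $\ell^1$ norm is finite because $l > r$, and the second with $(2^{(j-k)r})_{k > j}$, whose $\ell^1$ norm is finite because $r > 0$. Young's inequality (a discrete Hardy inequality) then yields
\[
\Bigl( \sum_{j=0}^\infty \bigl(2^{rj}\omega^l_{2^{-j}}(a)\bigr)^p \Bigr)^{1/p} \lesssim \Bigl(\sum_{k=-1}^\infty \bigl(2^{kr}\norm{b_k}_\mA\bigr)^p \Bigr)^{1/p} \lesssim \norm{a}_\mA + \Bigl(\sum_{k=0}^\infty \bigl(2^{kr} E_{2^k}(a)\bigr)^p \Bigr)^{1/p},
\]
and the right-hand side is equivalent to $\norm{a}_{\app p r \mA}$ by a standard dyadic comparison applied to \eqref{eq:appspace}. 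Together with $\norm{a}_\mA \leq \norm{a}_{\app p r \mA}$ this gives $\norm{a}_{\besov p r \mA} \lesssim \norm{a}_{\app p r \mA}$.

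The main obstacle I anticipate is bookkeeping for the two regimes $k \leq j$ versus $k > j$ together with the correct choice $l > r$, since the convergence of both geometric sums is essential and fails if $l$ is chosen only as $\lfloor r \rfloor$. The case $p = \infty$ requires the obvious adaptation (replace the $\ell^p$ sums by suprema), which was already treated in \cite{grkl10}; the case $1 \leq p < \infty$ is handled uniformly by the discrete Hardy/Young step above.
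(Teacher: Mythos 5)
Your argument is correct and its engine --- dyadic bandlimited blocks, the Bernstein-type Lemma~\ref{lem:ch45}, and an $\ell^1*\ell^p$ Hardy/Young summation --- is the same as the paper's. The differences are in how you set it up and how you cover general $r$. The paper invokes the abstract representation theorem of approximation theory to write $a=\sum a_k$ with $a_k\in X_{2^k}$ and $\norm{a}_{\app p r \mA}\asymp\inf(\sum 2^{krp}\norm{a_k}^p)^{1/p}$, whereas you telescope near-best approximants to get $\norm{b_k}_\mA\leq 4E_{2^k}(a)$ directly; both work, yours being more self-contained, the paper's avoiding the (routine) dyadic discretization of \eqref{eq:appspace} at the end. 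More substantively, the paper runs the Hardy argument only for $0<r<1$ with first differences, then reduces $r=m+\eta$ to that case by showing $\sum_k\delta^\alpha a_k$ converges to $\delta^\alpha(a)$ (via closedness of the derivations) and invoking the norm equivalence through derivatives, with integer $r$ deferred to a citation of Trigub. Your choice of $l=\floor r+1$-th differences from the start, with the bound $\omega^l_h(b_k)\lesssim\min((2^kh)^l,1)\norm{b_k}_\mA$ obtained by iterating Lemma~\ref{lem:ch45} (legitimate, since $\psi_t$ preserves the bandlimited subspaces --- the paper does the same iteration in its Littlewood--Paley appendix), handles all $r>0$ including integers uniformly and avoids both the derivative reduction and the integer-$r$ caveat. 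That is a genuine simplification. One small correction: you justify $b_k=a_{k+1}-a_k\in X_{2^{k+2}}$ by ``$X_\sigma+X_\tau\subseteq X_{\sigma+\tau}$,'' but the approximation scheme only postulates $X_\sigma\cdot X_\tau\subseteq X_{\sigma+\tau}$; what you actually need is that the $X_\sigma$ are nested linear subspaces, which immediately gives $b_k\in X_{2^{k+1}}$, so the conclusion stands with the correct (and even slightly better) reason.
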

\begin{proof}
  We adapt a standard proof~\cite{Butzer71} and verify the statement for $p < \infty$.
If $a \in \app p r \mA$, the representation theorem of approximation theory (see, e.g~\cite[3.1]{Pietsch81}) implies that
\begin{equation}
  \label{eq:blrep}
  a=\sum_{k=0}^\infty a_k,\quad \text{with  } a_k \in X_{2^k} \quad \text{and  } \sum_{k=0}^\infty 2^{kr p}\norm{a_k}_\mA^p  < \infty \,,
\end{equation}
where $(X_{\sigma})_{\sigma \geq 0}$ is the approximation scheme of \BL\ elements, and 
\[
\norm{a}_{\app p r \mA} \asymp \bigl(\sum_{k=0}^\infty 2^{kr p}\norm{a_k}_\mA^p \bigr)^{1/p} \,,
\]
where the infimum is taken over all admissible representations as in \eqref{eq:blrep}.
An  application of H\"olders inequality shows that  $\sum_{k=0}^\infty a_k$ is convergent in \mA.
Note that \eqref{eq:blrep} implies that 
$
  \label{eq:blcoeff}
  \norm{a_k}_\mA \leq C 2^{-k r}
$
for all $k\in \bn_0$.

We assume first that $0 < r <1$.
We need an estimate for the norm of
  $\Delta_t a$.
  \begin{equation}\label{eq:33}
  \begin{split}
        \norm{\Delta_t a}_\mA 
    \leq& \sum_{k=0}^M\norm{\Delta_t a_k}_\mA + \sum_{k=M+1}^\infty\norm{\Delta_t a_k}_\mA \\
    \leq& \sum_{k=0}^M\norm{\Delta_t a_k}_\mA + (M_\Psi+1)\sum_{k=M+1}^\infty\norm{ a_k}_\mA  \,,
  \end{split}
\end{equation}
where the value of $M$ will be chosen later.

  Lemma~\ref{lem:ch45} implies that
  \[
  \norm{\Delta_t a_k}_\mA\leq C 2^k \abs{t}\, \norm{a_k}_\mA 
  \]
  for all $k\in \bn $.
  Substituting back into (\ref{eq:33}) yields
  \begin{equation}
    \label{eq:bernsteindiff}
    \norm{\Delta_t a}_\mA 
    \leq C \Bigl(\sum_{k=0}^M2^k \abs t \norm{ a_k}_\mA  + \sum_{k=M+1}^\infty\norm{ a_k}_\mA  \Bigr) \,.
  \end{equation}
We use this relation for the estimation of the Besov seminorm.
\begin{align*}
  \abs a _{\besov p r \mA}
  &\asymp \biggl(\sum_{l=0}^\infty \bigl(2^{l r} \omega_{2^{-l}}(a) \bigr)^p \biggr)^{1/p}   \\
  & \ls \biggl(\sum_{l=0}^\infty 2^{l r p} 
        \Bigl( \sum_{k=0}^M2^k2^{-l}\norm{ a_k}_\mA  + \sum_{k=M+1}^\infty\norm{ a_k}_\mA \Bigr)^p \biggr)^{1/p}   \,.
\end{align*}
We split this expression into two parts and assume that $M=l$ in the inner sums.
\begin{align*}
  \abs a _{\besov p r \mA}
   \ls
   \biggl(\sum_{l=0}^\infty 2^{l (r-1) p} 
        \Bigl( \sum_{k=0}^l 2^k\norm{ a_k}_\mA \Bigr)^p \biggr)^{1/p}
      +
    \biggl(\sum_{l=0}^\infty 2^{l r p} 
        \Bigl(  \sum_{k=l+1}^\infty\norm{ a_k}_\mA \Bigr)^p \biggr)^{1/p}
        \,.
\end{align*}
We apply Hardy's inequalities  to both terms on the right hand side and obtain
\begin{align*}
  \abs a _{\besov p r \mA}
  &\ls 
\Bigl(\sum_{l=0}^\infty 2^{l (r-1) p} 
         2^{l p}\norm{ a_l}_\mA^p \Bigr)^{1/p}
      +
    \Bigl(\sum_{l=0}^\infty 2^{l r p} 
        \norm{ a_l}_\mA^p \Bigr)^{1/p}        \\
    &= 2 \Bigl(\sum_{l=0}^\infty 2^{l r p} 
        \norm{ a_l}_\mA^p \Bigr)^{1/p}  \, .
\end{align*}
As the representations $a=\sum_{k=0}^\infty a_k$ were arbitrary  we conclude that $\abs a _{\besov p r \mA} \ls \norm{a}_{\app p r \mA}$, using again the representation theorem.
  Next we consider the case $r = m+\eta $ for $m\in \bn_0 $ and $0<\eta
  <1$. The Bernstein inequality implies that
\[
\norm{\delta ^\alpha (a_k)}_{\mA } \leq C (2\pi 2^{k})^{|\alpha |}
\norm{a_k}_{\mA }
\]
for all $k\in \bn $ and $\alpha \in \bn ^d_0$. Consequently
$\sum _{k=0} ^\infty \delta ^\alpha a_k$ converges in $\mA $ for all
$\alpha $ with $|\alpha | \leq m$ and its sum must be $\delta ^\alpha(a)$ , as each $\delta _j$ is closed on $\mD (\delta ^\alpha )$. 
We now apply the above estimates 
$\delta ^\alpha (a)$ instead of $a$  and deduce that 
$\delta ^\alpha (a) $ must be in $\besov p \eta \mA $ for $|\alpha |
\leq k$. Thus $a \in \besov p r \mA $. 

  If $r$ is an integer, then we have to use second order differences and a corresponding version of the mean
  value theorem. The argument is almost the same as above
  (see~\cite{Trigub04} for details in the scalar case). 
\end{proof}

Combining Propositions~\ref{p:ch47} and~\ref{p:ch46}, we have completed the proof of Theorem~\ref{prop:jacksonbernstein}.

\section{Littlewood-Paley decomposition}
\label{sec:lp-decomposition-1}

\begin{proof}[Proof of Proposition \ref{prop:littl-paley-decomp-3}]
We include the derivation of the relevant results to keep the presentation self-contained. We follow~\cite{bergh76},  but we  use  approximation arguments where feasible.

We use some  obvious facts of the \DPU\ $(\varphi_k)_{k \geq -1}$.
By definition, $\supp \hat \varphi _ k = 2^k \supp \hat\varphi \subseteq \set{\omega \colon 2^{k-1} \leq \abs \omega _\infty \leq 2^{k+1}}$ for $k\geq 0$, and $\supp \varphi_{-1} \subseteq \set{\omega \colon \abs \omega_\infty \leq 1}$.
As the intersection of $\supp(\hat\varphi_k)$ with $\supp(\hat\varphi_{l})$ is nonempty only for $l\in \set{k-1,k,k+1}$ we obtain that
$    \varphi_k= \varphi_k * (\varphi_{k-1} + \varphi_k + \varphi_{k+1})$ if $k\geq 0$, and
    $\varphi_{-1}=\varphi_{-1}*(\varphi_{-1}+ \varphi_0)$.

Assume first that (\ref{eq:39}) holds. 
Then $\norm{\varphi_k *a}_\mA \leq C 2^{-rk}$, and so 
$\sum_{k=-1}^\infty\varphi_k*a$ 
is norm convergent in \mA. 
A standard weak type argument shows that the limit is actually $a$.

For  $a \in \besov p r \mA$ and $m > \floor r$ we use 
$
  \norm{a}_{\besov p r \mA} \asymp \norm{a}_\mA + \bigl(\sum_{k=0}^\infty ( 2^{r k} \omega_{2^{-k}}^m(a) )^p \bigr)^{1/p}
$. 
As 
$\norm{\Delta_t^m (\varphi_k * a)}_\mA \leq C_m \norm{\varphi_k * a}_\mA$ by Lemma~\ref{lem-modsmooth} (\ref{smoothordera}), and
$\norm{\Delta_t^m (\varphi_k * a)}_\mA \leq C' \abs{t}^m 2^{mk}\norm{\varphi_k * a}_\mA$ by repeated application of
 Lemma~\ref{lem:ch45} we conclude that 
 \begin{equation} \label{eq:itbernstein}
   \norm{\Delta_t^m (\varphi_k * a)}_\mA \leq C_1 \min(1, \abs{t}^m 2^{mk}) \norm{\varphi_k * a}_\mA \, . 
 \end{equation}
As an immediate consequence we  obtain
\begin{equation}
  \label{eq:43}
 \omega_{\abs{t}}^m(a) \leq C_1 \sum_{k=-1}^\infty \min(1,t^m2^{mk}) \norm{\varphi_k * a}_\mA  \, ,
\end{equation}
and so
\begin{equation}
  \label{eq:61}
  2^{r j} \omega_{2^{-j}}^m(a) \leq C_1\sum_{k=1}^\infty 2^{(j-k) r}2^{k r}\min(1,{2^{-(j-k) m}}) \norm{\varphi_k * a}_\mA \,.
\end{equation}
The right hand side of this relation can be written as a convolution.
If we set
$
  u(l)= \min(1,2^{-lm})2^{lr} 
$
for $l \in \bz$, and 
$
  v(l)=  2^{lr} \norm{\varphi_l *a}_\mA$ if $l >-1$ and $0$ else,
then $u$ and $v$ are sequences in $\lone(\bz)$, and the right hand side of (\ref{eq:61}) is just $(u * v) (j)$.

So
$
\norm{\bigl(2^{r j} \omega_{2^{-j}}^m(a) \bigr)_{j \in \bn}}_{\ell^p(\bn)} \leq C \norm{u}_{\lone(\bz)} \norm{v}_{\lpz}
$, 
and this means that
\begin{equation}\label{eq: bsvsmlp}
  \norm{a}_{\besov p r \mA} \leq C  \Bigl( \sum_{k = -1}^\infty 2^{r k p}\norm{\varphi_k * a}_\mA^p \Bigr)^{1/p} \, ,  
\end{equation}
so (\ref{eq:39}) implies that $a \in \besov p r \mA$.

For the other inequality we  use 
$
\norm{a}_{\besov p r \mA} \asymp \norm{a}_\mA + \sum_{\abs \alpha = m} \norm{\delta^\alpha(a)}_{\besov p {r-m} \mA}
$
with $m < r \leq m+1$.

First we show that 
\begin{equation}
  \label{eq:62}
  \norm{\varphi_k * a}_\mA \leq C 2^{-mk} \norm{\varphi_k *\delta^\alpha(a)}_\mA \, , \quad m= \abs \alpha
\end{equation}
and
\begin{equation}
  \label{eq:63}
  \norm{\varphi_k *\delta^\alpha(a)}_\mA \leq C \omega_{2^{-k}}^{2} (\delta^\alpha a).
\end{equation}
For the proof of these relations choose an even function $\Phi \in S(\brd)$ such that
  $\hat \Phi \equiv 1$ on $\supp \hat \varphi_0$, and  
  $\hat \Phi \equiv 0$ in a neighbourhood of 0.
Set $\Phi_k(t)=2^{kd}\Phi(2^k t)$, then $\norm{\Phi_k}_1=\norm{\Phi}_1$ and $\Phi_k * \varphi_k = \varphi_k$.
The  function
$
\hat \eta ^{(\alpha)}\colon \omega \to (2 \pi i \omega)^{-\alpha}{\hat \Phi (\omega)} 
$
is an element of $\schwartz$. Again, if we set $\eta_k ^{(\alpha)} (t) =2^{kd}\eta ^{(\alpha)}(2^k t)$, then $\norm{\eta_k ^{(\alpha)}}_1=\norm{\eta ^{(\alpha)}}_1$.
Then 
$
\hat \Phi_k(\omega)=\hat \Phi (2^{-k}\omega)= 2^{-k \abs \alpha} (2 \pi i \omega)^\alpha \hat \eta_k^{(\alpha)}(w) \,,
$
and so, assuming that $\abs \alpha =m$, we obtain
$
\hat \varphi_k(\omega) = 2^{-k m} \hat \eta_k ^{(\alpha)}(\omega) (2 \pi i \omega)^\alpha \hat\varphi_k(\omega) 
$
for all $\omega\in\brd$, which implies
\begin{equation*}
  \varphi_k * a = 2^{- k \, m} \eta_k ^{(\alpha)} * \delta^\alpha(\varphi_k * a)= 2^{- k \, m} \eta_k ^{(\alpha)} * \varphi_k *\delta^\alpha( a),
\end{equation*}
the last equality by \eqref{eq:derivderiv}. Now (\ref{eq:62}) follows immediately.

For the proof of (\ref{eq:63}) set $y = \delta^\alpha(a)$ and $y_k=\varphi_k * y = \Phi_k * \varphi_k * y = \Phi_k * y_k$.
We obtain
\begin{align*}
\varphi_k * y &= \Phi_k*y_k 
= \int_{\brd} \Phi_k(t) \psi_{-t}(y_k) \, dt \\
&= \tfrac{1}{2} \int_{\brd} \Phi_k(t) \bigl\{\psi_{-t}(y_k) -2 y_k +\psi_t(y_k)  \bigr\} \, dt
= \tfrac{1}{2} \int_{\brd} \Phi_k(t) \psi_{-t}\Delta^2_t(y_k) \, dt \,,
\end{align*}
as $\int_{\brd} \Phi_k =0$ and $\Phi_k(-t)=\Phi_k(t)$. Changing variables we obtain
\begin{align*}
  \varphi_k * y =\tfrac{1}{2} \int_{\brd} \Phi(u) \psi_{-2^{-k}u}\Delta^2_{2^{-k}u}(y_k) \, dt
=\tfrac{1}{2} \int_{\brd} \Phi(u) \psi_{-2^{-k}u}\bigl( \varphi_k*\Delta^2_{2^{-k}u}(y) \bigr) \, dt.
\end{align*}
Taking norms we get
\begin{align*}
\norm{ \varphi_k * y}_{\mA} &\leq \tfrac{M_\psi}{2}  \int_{\brd} \abs{\Phi(u)} \norm{ \varphi_k}_1 \omega^{2}_{2^{-k}\abs u}(y)  \, dt.\\
&\leq \tfrac{M_\psi}{2} \norm{ \varphi_0}_1 \int_{\brd} \abs{\Phi(u)}  (1+ \abs u ^2) \omega^{2}_{2^{-k}}(y)  \, dt\\
&\leq C \omega^{2}_{2^{-k}}(y) \,,
\end{align*}
where the estimate for  $\omega^2_{2^{-k}\abs u}(y)$ follows from Lemma~\ref{lem-modsmooth}. This is what we wanted to show. 

The proof of the reverse inclusion now follows by putting (\ref{eq:62}) and (\ref{eq:63}) together.
\begin{equation*}
  2^{r k } \norm{\varphi_k *a}_\mA \leq C 2^{(r- m) k} \norm{\varphi_k * \delta^\alpha(a)}_\mA \leq C 2^{(r -m) k} \omega^{2}_{2^{-k}}(\delta^\alpha(a)) \,,
\end{equation*}
and so
\begin{equation}\label{eq:lplsbsv}
  \begin{split}
    \sum_{k=-1}^\infty 2^{r p k } \norm{\varphi_k *a}^p_\mA 
    &\leq C \bigl(\norm{a}_\mA^p+\sum_{k=0}^\infty 2^{(r -m)p k} \omega^{2}_{2^{-k}}(\delta^\alpha(a))^p\bigr) \\
    &\leq C' (\norm{a}_\mA^p + \abs{\delta^\alpha(a)}^p_{\besov p {r-m} \mA})
    \leq C" \norm{a}^p_{\besov p {r-m} \mA} \,.
  \end{split}
\end{equation}
We have shown that $a \in \besov p {r} \mA$ implies (\ref{eq:39}).
The norm equivalence follows from (\ref{eq: bsvsmlp}) and (\ref{eq:lplsbsv}).
\end{proof}

\bibliography{almost_diagonal} 
\bibliographystyle{abbrv}
\end{document}